\documentclass[11pt,a4paper,oneside]{amsart}

%!TEX encoding = UTF-8 Unicode
\usepackage[utf8]{inputenc}

\usepackage{fullpage}
\usepackage{amsmath, amsthm, amssymb, mathtools, mathrsfs}
\usepackage{enumerate}
\usepackage[all]{xy}
\usepackage{array, booktabs}
\usepackage{float}
\usepackage{mathtools}
\usepackage{extarrows}
\usepackage{tabularx}
\usepackage{graphicx,color}
\usepackage{colortbl}%表の色付け

%\mathtoolsset{showonlyrefs=true}%参照した数式だけに数式番号を付ける. 参照は \ref ではだめで \eqref を使う必要がある.

% --------------------------------------------------------------------------
%	environments
% --------------------------------------------------------------------------

\newtheorem{theoremcounter}{Theorem Counter}[section]

\theoremstyle{plain} %Theorem 環境の見出しを太字、本文に斜体を使う
\newtheorem{definition}[theoremcounter]{Definition}
\newtheorem{theorem}[theoremcounter]{Theorem}
\newtheorem{lemma}[theoremcounter]{Lemma}
\newtheorem{proposition}[theoremcounter]{Proposition}
\newtheorem{corollary}[theoremcounter]{Corollary}
\newtheorem{conjecture}[theoremcounter]{Conjecture}

\theoremstyle{definition} %Theorem 環境の見出しを太字、本文に斜体を使わない
\newtheorem{remark}[theoremcounter]{Remark}

\newtheorem*{notation*}{Notation}
\numberwithin{equation}{section}

% --------------------------------------------------------------------------
%	commands and operators
% --------------------------------------------------------------------------

\newcommand{\iu}{\sqrt{-1}}%imaginary unitの頭文字

\newcommand{\Z}{\mathbb{Z}}
\newcommand{\Q}{\mathbb{Q}}
\newcommand{\R}{\mathbb{R}}
\newcommand{\C}{\mathbb{C}}
\newcommand{\bbH}{\mathbb{H}}
\newcommand{\bbP}{\mathbb{P}}

\newcommand{\calQ}{\mathcal{Q}}

\newcommand{\veps}{\varepsilon}

\DeclareMathOperator{\ReNew}{Re}

\DeclareMathOperator{\SL}{SL}

\DeclareMathOperator{\GL}{GL}

\newcommand{\disc}{\mathop{\mathrm{disc}}\nolimits}

\newcommand{\relmiddle}[1]{\mathrel{}\middle#1\mathrel{}}%括弧に合わせて大きくなる\mid. \relmiddle|のように使う

\newcommand{\pmat}[1]{\begin{pmatrix}#1\end{pmatrix}}
\newcommand{\smat}[1]{\bigl(\begin{smallmatrix}#1\end{smallmatrix}\bigr)}

\newcommand{\abs}[1]{\left\lvert#1\right\rvert}

\newcommand{\thmref}[2]{\hyperref[#2]{#1 \ref*{#2}}}
\renewcommand{\eqref}[1]{\hyperref[#1]{(\ref*{#1})}}

\makeatletter
\renewcommand{\theenumi}{{\upshape (\@roman\c@enumi)}}

\renewcommand{\p@enumii}{}
\renewcommand{\theenumii}{{\upshape (\@alph\c@enumii)}}

\makeatother

% --------------------------------------------------------------------------

\renewcommand{\emph}[1]{\textbf{#1}}

% --------------------------------------------------------------------------

%ハイパーリンク. 目次にもハイパーリンクが付く. プリアンブルのできるだけ後ろに書く. 
\usepackage[dvipdfmx]{hyperref}
\hypersetup{% hyperrefオプションリスト
colorlinks=true,
linkcolor=blue,
citecolor=red,
}

% --------------------------------------------------------------------------
\begin{document}
% --------------------------------------------------------------------------

\title{Hurwitz class numbers with level and modular correspondences}
\author{Yuya Murakami} 
\thanks{Mathematical Inst. Tohoku Univ., 6-3, Aoba, Aramaki, Aoba-ku, Sendai 980-8578, JAPAN.
\textit{E-mail address}: \texttt{yuya.murakami.s8@dc.tohoku.ac.jp}} 
\date{\today}
\maketitle
%\tableofcontents

% --------------------------------------------------------------------------

\begin{abstract}
	In this paper, we prove Hurwitz-Eichler type formulas for Hurwitz class numbers with each level $ M $ when the modular curve $ X_0(M) $ has genus zero.
	A key idea is to calculate intersection numbers of modular correspondences with the level in two different ways.
	A generalization of Atkin-Lehner involutions for $ \Gamma_0(M) $ and its subgroup $ \Gamma_0^{(M')}(M) $ is introduced to calculate intersection multiplicities of modular correspondences at cusps.
\end{abstract}

% --------------------------------------------------------------------------

\section{Introduction and statement of results} \label{sec:intro}

% --------------------------------------------------------------------------

For a positive integer $ M, D $ with $ D \equiv 0, 3 \bmod 4 $, let us define
\[
H^M(D) := \sum_{[Q] \in \calQ_{-D, >0}^{M}/\Gamma_0(M)}
\frac{2}{\# \Gamma_0(M)_Q}
\]
and call it the $ D $th Hurwitz class number of level $ M $.
Here, for integers $ a, b, c $, let us write a quadratic form $ [a, b, c] := a X^2 + bXY + c Y^2 $ whose discriminant is $ \disc Q := b^2 - 4ac $ and let
\[
\calQ_{-D, >0}^{M} :=
\left\{ Q= [Ma, b, c] \mid a, b, c \in \Z, a>0, \disc Q = -D \right\}.
\]
%Let $ \SL_2(\Z) := \left\{ \gamma \in M_2(\Z) \relmiddle| \det \gamma = 1 \right\} $.
The group 
\[
\Gamma_0(M) := \left\{ \gamma \in \SL_2(\Z) \relmiddle| \gamma \equiv \begin{pmatrix} * & * \\ 0 & * \end{pmatrix} \bmod M \right\}
\]
acts on $ \calQ_{-D, >0}^{M} $ by
\[
\left( Q \circ \begin{pmatrix} a & b \\ c & d \end{pmatrix} \right) (X, Y)
:= Q( aX+bY, cX+dY ), \quad  
Q \in \calQ_{-D, >0}^{M}, \quad  
\begin{pmatrix} a & b \\ c & d \end{pmatrix} \in \Gamma_0(M).
\]
We denote by $ \Gamma_0(M)_Q $ the stabilizer of $ Q \in \calQ_{-D, >0}^{M} $ under this action.
To compute $ \calQ_{-D, >0}^{M}/\Gamma_0(M) $ is equivalent to understand imaginary quadratic points with discriminant $ -D $ on a suitable fundamental domain for the modular curve 
$Y_0(M):=\Gamma_0(M)\backslash \mathbb{H}$ and the related reduction theory as well. 
This will be carried out in Section \ref{sec:computation}. 

When $ M=1 $, we put $ H(D) := H^1(D) $.
For a positive integer $ N $ which is not a square, the following relation is known as Hurwitz-Eichler relation:
\begin{equation} \label{eq:Hurwitz-Eichler}
	\sum_{x \in \Z,\ x^2 < 4N} H(4N- x^2)
	= \sum_{ad=N} \max \{ a, d \}.
\end{equation}
%\cite[Theorem 5.3.8]{Cohen}}
%A detailed proof will be found in \cite{GK} or \cite{Vog} with the result in \cite{Gor}.
Eichler's original proof is found in \cite{Eichler}.
Another proof will be found in \cite{Ling} by calculating intersection multiplicities at cusps and intersection number of certain algebraic cycles on $ \bbP^1 \times \bbP^1 $ which are called modular correspondences.

In this paper, we consider an analog of the relation (\ref{eq:Hurwitz-Eichler}) for $ M > 1 $ such that the genus of the modular curve $ X_0(M) $ is zero.
Our main result is the following theorem:

\begin{theorem}
	\label{thm:main_class_num}
	Let $ M $ be $ 2 \le M \le 10$ or $ M \in \{ 12, 13, 16, 18, 25 \} $.
	Let $ N $ be a positive integer which is coprime to $ M $ and is not a square.
	It holds that
	\begin{enumerate}
		\item 
			\[
			\sum_{x \in \Z,\ x^2 < 4N} 
			H^M \left( 4N - x^2 \right)
			= \sum_{a d = N} |a - d|
			\]
			if either of the following conditions is satisfied:
			\begin{enumerate}
				\item $ M \in \{ 2,3,5,7,13 \}, $
				\item $ M = 9 $ and $ N \equiv -1 \bmod 3 $,
				\item $ M = 25 $ and $ N \equiv \pm 2 \bmod 5 $,
			\end{enumerate}
		\item 
			\[
			\sum_{x \in \Z,\ x^2 < 4N} 
			H^{4} \left( 4N - x^2 \right)
			= 2 \sum_{a d = N, a>d} (a-2d)
			\]
			if $ M = 4 $,
		\item 
			\[
			\sum_{x \in \Z,\ x^2 < 4N} 
			H^M \left( 4N - x^2 \right)
			= 2 \sum_{a d = N, a>d} (a-3d)
			\]
			if either of the following conditions is satisfied:
			\begin{enumerate}
				\item $ M \in \{ 6, 8, 10 \}, $
				\item $ M = 9 $ and $ N \equiv 1 \bmod 3 $,
				\item $ M = 16 $ and $ N \equiv -1 \bmod 4 $,
				\item $ M = 18 $ and $ N \equiv -1 \bmod 6 $,
			\end{enumerate}
		\item 
			\[
			\sum_{x \in \Z,\ x^2 < 4N} 
			H^{12} \left( 4N - x^2 \right)
			= 2 \sum_{a d = N, a>d} (a-5d)
			\]
			if either of the following conditions is satisfied:
			\begin{enumerate}
				\item $ M = 12 $,
				\item $ M = 16 $ and $ N \equiv 1 \bmod 4 $,
			\end{enumerate}
		\item 
			\[
			\sum_{x \in \Z,\ x^2 < 4N} 
			H^{18} \left( 4N - x^2 \right)
			= 2 \sum_{a d = N, a>d} (a-7d)
			\]
			if $ M = 18 $ and $ N \equiv 1 \bmod 6 $,
		\item
			 \[
			 \sum_{x \in \Z,\ x^2 < 4N} 
			 H^{25} \left( 4N - x^2 \right)
			 = \sum_{a d = N} |a - d| 
			 - 8\sum_{ad = N, a>d, a \equiv d \bmod 5}
			 d
			 \]
			 if $ M = 25 $ and $ N \equiv \pm 1 \bmod 5 $.		
	\end{enumerate}
\end{theorem}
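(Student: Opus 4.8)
The plan is to adapt the intersection-theoretic argument behind \eqref{eq:Hurwitz-Eichler} to level $M$, using the genus-zero hypothesis to fix an isomorphism $X_0(M) \cong \bbP^1$ given by a Hauptmodul $t_M$ of the function field. For $N$ coprime to $M$ the Hecke correspondence $T_N$ attached to the double coset $\Gamma_0(M)\,\smat{1 & 0 \\ 0 & N}\,\Gamma_0(M)$ (equivalently, the two degeneracy maps out of $X_0(MN)$) determines a curve $C_N \subset X_0(M) \times X_0(M) \cong \bbP^1 \times \bbP^1$, and the whole proof consists in computing the intersection number $C_N \cdot \Delta$ with the diagonal $\Delta$ in two different ways and equating the outcomes.

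The first (global) computation is purely formal. Since $\gcd(N,M)=1$, each projection $C_N \to \bbP^1$ has degree equal to the number of sublattices of index $N$ in $\Z^2$, namely $\sum_{d \mid N} d$, so $C_N$ has bidegree $\left( \sum_{d\mid N} d,\ \sum_{d\mid N} d \right)$ on $\bbP^1 \times \bbP^1$. As $\Delta$ has bidegree $(1,1)$, B\'ezout on $\bbP^1 \times \bbP^1$ gives $C_N \cdot \Delta = 2\sum_{d\mid N} d = \sum_{ad=N}(a+d)$, a closed number independent of $M$. This quantity is the common source of all the divisor sums on the right-hand sides, once the cuspidal multiplicities have been subtracted.

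The second (local) computation splits $C_N \cap \Delta$ into interior and cuspidal points. An interior point $(\tau,\tau)$ lies on $C_N$ exactly when $g\tau = \tau$ in $X_0(M)$ for some integral $g$ of determinant $N$ meeting the level-$M$ congruences; writing $x = \Tr g$, ellipticity forces $x^2 < 4N$. This is where the hypothesis that $N$ is not a square is used: it rules out any parabolic boundary case $x^2 = 4N$ and cleanly separates interior from cusp. By the reduction theory of Section~\ref{sec:computation}, these fixed points, counted with the stabiliser weights $2/\#\Gamma_0(M)_Q$, are in bijection with $\Gamma_0(M)$-classes of forms $Q \in \calQ^M_{-D,>0}$ of discriminant $-D = -(4N-x^2)$; summing over $x$ therefore reproduces exactly the left-hand side $\sum_{x^2 < 4N} H^M(4N-x^2)$.

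The main obstacle is the cuspidal contribution: at each cusp of $X_0(M)$ one must determine the local intersection multiplicity of $C_N$ with $\Delta$, and this is where the generalised Atkin--Lehner involutions for $\Gamma_0(M)$ and for the intermediate group $\Gamma_0^{(M')}(M)$ enter. The plan is to use these involutions to transport an arbitrary cusp to $\infty$, where the branches of $T_N$ are the maps $\tau \mapsto (a\tau + b)/d$ with $ad = N$, and to read off the contact order of each branch with $\Delta$ from the leading term of $t_M$ in the local uniformiser; the elementary case $M = 1$ already shows that the cusp $\infty$ alone contributes $\sum_{ad=N}\min\{a,d\}$. For prime $M$ the two cusps are exchanged by $W_M$ and each contributes this same amount, giving cusp total $2\sum_{ad=N}\min\{a,d\}$ and hence case~(i); for the composite and prime-power levels the several cusps of differing widths must be treated individually. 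The congruence conditions on $N$ arise precisely at the cusps of denominator $c$ with $c^2 \mid M$ (for example the four denominator-$5$ cusps of $X_0(25)$), where the way the degree-$N$ isogenies permute the cusps of $X_0(MN)$ lying above them is governed by $N$ modulo the primes dividing $M$; this dependence is what splits the statement into its several cases and, for $M = 25$ with $N \equiv \pm 1 \bmod 5$, produces the extra correction $-8\sum_{ad=N,\,a>d,\,a\equiv d \bmod 5} d$. Equating the global count $\sum_{ad=N}(a+d)$ with (interior) $+$ (cuspidal) and solving for the interior sum then yields each displayed formula.
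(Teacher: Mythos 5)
Your proposal follows essentially the same route as the paper: Bézout on $\bbP^1 \times \bbP^1$ gives the global count $2\sigma(N)$ (Lemma \ref{lem:global_int_num}), the interior contribution is identified with $\sum_{x^2<4N} H^M(4N-x^2)$ exactly as in Theorem \ref{thm:main_Y_0(M)}, and the cuspidal multiplicities are computed by transporting each cusp to $i\infty$ via (generalized) Atkin--Lehner involutions and reading off contact orders of the branches $\tau \mapsto (a\tau+b)/d$, which is the content of Propositions \ref{prop:order_t_W} and \ref{prop:cusp_int_mult}. Your diagnosis of where the congruence conditions on $N$ enter (cusps of denominator $c$ with $c^2 \mid M$, and the extra correction for $M=25$) likewise matches the paper's treatment via $\delta_{s,s'}(N)$ and $\Gamma_0^{(5)}(25)$.
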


To prove Theorem \ref{thm:main_class_num}, we calculate both sides in two ways as is done in the proof of \cite[Corollary 1.1]{Ling}.
In our calculation, we use the modular correspondence $T_{N}^{\Gamma_0(M)}$ with level $ M $ and degree $ N $, which is our main theme.

We state a definition of modular correspondences.
Let 
\[
\bbH := \{ \tau = x+y\sqrt{-1} \mid x, y \in \R, y>0 \}
\]
be the complex upper half-plane. 
For a positive integer $ M $, we define the modular curves of level $ M $ as
\[
Y_0(M) := \Gamma_0(M) \backslash \bbH, \quad  
X_0(M) := \Gamma_0(M) \backslash (\bbH \cup \Q \cup \{ i\infty \}).
\]
They admit the structure as Riemann surfaces and it turns out that $ X_0(M) $ is compact. 
%A pair $(E, C)$ is an elliptic curve with level structure for the congruence subgroup $\Gamma_0(M)$ if $ E $ is an elliptic curve over $ \C $ and $ C $ is a cyclic subgroup of order $ M $ in $ E $. 
%The modular curve $ Y_0(M) $ identifies naturally the set of isometric classes of elliptic curves with level structure for $\Gamma_0(M)$. 
Each element in $ \Gamma_0(M) \backslash (\Q \cup \{ i\infty \}) $ is called a cusp.

In this paper, we assume that the modular curve $ X_0(M) $ has genus zero.
It is well-known that such $ M $ is $ 1 \le M \le 10 $ or $ M= 12, 13, 16, 18, 25 $ (\cite[Section 3]{Seb}).
For a positive integer $ N $ coprime to $ M $, the modular correspondence of degree $ N $ with respect to $ \Gamma_0(M) $ introduced in \cite{Mura} is defined by
\begin{equation} \label{eq:def_of_T}
	T_{N}^{\Gamma_0(M)} := 
	\bigcup_{A = \smat{a & b \\ 0 & d} \in \mathrm{M}_2({\Z}), \,
		ad=N, \, 0 \le b < d}
	\left\{
	(\Gamma_0(M) \tau, \Gamma_0(M) A(\tau)) \in X_0(M) \times X_0(M)
	\right\}.
\end{equation}
It turns out that the modular correspondence $ T_{N}^{\Gamma_0(M)} $ is an algebraic cycle in $ X_0(M) \times X_0(M) $ by \cite[Theorem 2.9]{Mura}.
The modular correspondence $ T_{N}^{\Gamma_0(M)} $ and the diagonal set 
\[
\Delta := \{ (\Gamma_0(M) \tau, \Gamma_0(M) \tau) \in X_0(M) \times X_0(M) \}
\]
intersect properly when $ N $ is not a square and the intersection number of them on $ Y_0(M) \times Y_0(M) $ coincides with the left-hand side in Theorem \ref{thm:main_class_num} by \cite[Theorem 1.2]{Mura}.

On the other hand, we can calculate the intersection number on $  Y_0(M) \times Y_0(M) $ by subtracting it on $ X_0(M) \times X_0(M) \smallsetminus Y_0(M) \times Y_0(M) $ from it on $ X_0(M) \times X_0(M) $.
The result coincides with the right-hand side in Theorem \ref{thm:main_class_num} by the following theorem. 

%Here we state this result for a part of cases and state for all cases in Section \ref{sec:mult_cusp} and \ref{sec:class_num_formula}. 

\begin{theorem} \label{thm:main_cusp_intro}
	Let $ M \in \{ 2, 3, 5, 6, 7, 8, 10, 12, 13 \} $ and $ N $ be a positive integer coprime to $ M $ which is not a square. 
	Then the following holds. 
	\begin{enumerate}
		\item The intersection number of $ \Delta $ and $ T_{N}^{\Gamma_0(M)}$ is
		\[
		(\Delta \cdot T_{N}^{\Gamma_0(M)})_{X_0(M) \times X_0(M)}
		= 2 \sum_{d \mid N} d.
		\]
		\item \label{item:thm:main_cusp_intro}
		The intersection multiplicity of $ \Delta $ and $ T_{N}^{\Gamma_0(M)}$ at a pair $ (s, s) $ of cusp $ s $ is
		\[
		(\Delta \cdot T_{N}^{\Gamma_0(M)})_{(s, s)}
		= 2 \sum_{a d = N, a > d} d.
		\]
		\item We have
		\[
		(\Delta \cdot T_{N}^{\Gamma_0(M)})_{Y_0(M) \times Y_0(M)}
		= 2 \sum_{a d = N, a > d} (a - (c_0(M) - 1) d)
		\]
		where
		\[
		c_0(M) :=
		\# \{ \text{cusps in } X_0(M) \}
		=
		\begin{cases}
		2 & \text{ if } M \in \{ 2, 3, 5, 7, 13 \}, \\
		3 & \text{ if } M = 4, \\
		4 & \text{ if } M \in \{ 6, 8, 10 \}, \\
		6 & \text{ if } M = 12. \\
		\end{cases}
		\]
	\end{enumerate} 
\end{theorem}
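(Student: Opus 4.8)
The plan is to compute all three numbers by exploiting that $X_0(M)$ is a rational curve, and then to reduce (iii) to (i) and (ii) by pure bookkeeping. For assertion (i) I would identify $X_0(M)\cong\bbP^1$ (genus zero), so that $X_0(M)\times X_0(M)\cong\bbP^1\times\bbP^1$, whose numerical Picard group is $\Z^2$ generated by the two rulings, with self-intersection $0$ and mutual intersection $1$. The diagonal $\Delta$ is the graph of an isomorphism, hence of bidegree $(1,1)$. Each projection $T_N^{\Gamma_0(M)}\to X_0(M)$ is finite of degree equal to the number of representatives $\smat{a&b\\0&d}$, namely $\sum_{d\mid N}d$; since the transpose correspondence is of the same shape (using $\gcd(N,M)=1$), both projections have this degree, so $T_N^{\Gamma_0(M)}$ has bidegree $(\sum_{d\mid N}d,\ \sum_{d\mid N}d)$. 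As $N$ is not a square the identity branch $\smat{\sqrt N&0\\0&\sqrt N}$ does not occur, so $\Delta$ is not a component of $T_N^{\Gamma_0(M)}$ and the two cycles meet properly; the $\bbP^1\times\bbP^1$ intersection formula then gives $(1,1)\cdot(\sum_{d\mid N}d,\sum_{d\mid N}d)=2\sum_{d\mid N}d$, which is (i).

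For assertion (ii) I would first compute the multiplicity at the cusp $\infty$, of width $1$ for $\Gamma_0(M)$, in the local coordinate $q=e^{2\pi\iu\tau}$ on each factor (so $(\infty,\infty)$ is the origin). The branches of $T_N^{\Gamma_0(M)}$ through $(\infty,\infty)$ are exactly those coming from the upper-triangular representatives, all of which fix $\infty$; writing $A=\smat{a&b\\0&d}$ one gets $q_2=\zeta_d^{\,b}q_1^{a/d}$ with $\zeta_d=e^{2\pi\iu/d}$. Setting $g:=\gcd(a,d)$, $a':=a/g$, $d':=d/g$, each arc satisfies $q_2^{d'}=\zeta_g^{\,b}q_1^{a'}$ and is parametrized by $q_1=t^{d'}$, $q_2=\eta t^{a'}$ with $\eta^{d'}=\zeta_g^{\,b}$. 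I would then check, using the translation $\tau\mapsto\tau+1\in\Gamma_0(M)$, that two representatives with the same factorization $(a,d)$ yield the same branch precisely when their $b$ agree modulo $g$, so each factorization contributes exactly $g$ distinct branches. On each branch the pullback of the local equation $q_1-q_2$ of $\Delta$ is $t^{d'}-\eta t^{a'}$, of order $\min(a',d')=\min(a,d)/g$; summing over the $g$ branches yields $\min(a,d)$ per factorization, whence
\[
(\Delta\cdot T_N^{\Gamma_0(M)})_{(\infty,\infty)}=\sum_{ad=N}\min(a,d)=2\sum_{ad=N,\,a>d}d .
\]

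It then remains to show that every cusp contributes the same multiplicity, and this is where I expect the main obstacle to lie. Because $\gcd(N,M)=1$, an Atkin-Lehner–type involution $w$ commutes with $T_N^{\Gamma_0(M)}$ and fixes $\Delta$, so $w\times w$ is an automorphism of $X_0(M)\times X_0(M)$ carrying the pair $(\Delta,T_N^{\Gamma_0(M)})$ to itself and $(\infty,\infty)$ to $(w(\infty),w(\infty))$; hence $(\Delta\cdot T_N^{\Gamma_0(M)})_{(s,s)}=(\Delta\cdot T_N^{\Gamma_0(M)})_{(\infty,\infty)}$ for every $s$ in the orbit of $\infty$. The classical Fricke and Atkin-Lehner involutions already reach all cusps when $M$ is squarefree, but for $M\in\{8,12\}$ they fail to act transitively on the cusps; this is exactly the point at which the generalized involutions attached to $\Gamma_0^{(M')}(M)$ are needed to connect the remaining cusps to $\infty$. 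Verifying that these generalized involutions still commute with $T_N^{\Gamma_0(M)}$ and permute the cusps transitively is the crux of the argument; granting it, (ii) holds at every cusp.

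Finally, (iii) is bookkeeping. The diagonal $\Delta$ meets $X_0(M)\times X_0(M)\smallsetminus Y_0(M)\times Y_0(M)$ only in the points $(s,s)$ with $s$ a cusp, of which there are $c_0(M)$, so subtracting the boundary contribution of (ii) from the total of (i) gives
\[
(\Delta\cdot T_N^{\Gamma_0(M)})_{Y_0(M)\times Y_0(M)}
= 2\sum_{d\mid N}d-c_0(M)\cdot 2\!\!\sum_{ad=N,\,a>d}\!\!d .
\]
Since $N$ is not a square we may write $\sum_{d\mid N}d=\sum_{ad=N,\,a>d}(a+d)$, and the right-hand side becomes $2\sum_{ad=N,\,a>d}\bigl(a-(c_0(M)-1)d\bigr)$, which is the asserted formula; the displayed values of $c_0(M)$ are the standard cusp counts for $X_0(M)$.
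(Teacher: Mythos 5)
Your computations of (i), of the local multiplicity at $(\infty,\infty)$, and the bookkeeping for (iii) are correct and agree with the paper's answers, but for part (ii) at the remaining cusps you take a genuinely different route from the paper, and you leave its crux unproved. The paper does not use a symmetry argument at all: it computes the multiplicity at \emph{every} pair of cusps $(s,s')$ directly (Propositions \ref{prop:order_t_W} and \ref{prop:cusp_int_mult}), using the generalized Atkin--Lehner matrix $W$ only as a local uniformizer --- it shows $t\circ W(\tau)-t(s)$ has $q$-order $1$ and that $t\circ AW'(\tau)-t(s)$ has order $(d,m'a+Mb)^2/ad$, then counts the matrices $A$ with prescribed $\gcd$ via $A(N,g)=N/g$; this recovers your $\sum_{ad=N}\min(a,d)$ and has the advantage of surviving even when $W$ fails to normalize $\Gamma_0(M)$ (the $M=25$ case, where only $\Gamma_0^{(5)}(25)$ is normalized). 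Your route instead transports the computation at $(\infty,\infty)$ to $(s,s)$ by the automorphism $w\times w$, which is cleaner when it applies, but requires that every cusp be $w(i\infty)$ for some $w$ normalizing $\Gamma_0(M)$ \emph{and} carrying the cycle $T_N^{\Gamma_0(M)}$ to itself; you correctly identify $M\in\{8,12\}$ as the problematic levels and then write ``granting it.'' That granted step is precisely the content of the paper's Sections \ref{sec:involution}--\ref{sec:normalizer}: one must check that $W_2^8, W_4^8, W_2^{12}, W_6^{12}$ normalize both $\Gamma_0(M)$ and $G_0(M)$ (Proposition \ref{prop:gen_A-L_normalize_G}, via the criterion $(f,m)\mid(f,24)$ resp.\ $(f,m)\mid(f,2)$, which does hold here since $f=2$), and then that conjugation by such a $W$ preserves the set $\{B\in G_0(M)\cap M_2(\Z):\det B=N\}$ --- integrality of $WBW^{-1}$ follows because its denominators divide both $D=(M,m^2)$ and an integer coprime to $M$. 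None of this is automatic, so as written your proof has a real, if fillable, gap exactly where you said the crux lies; the rest is sound, including the branch count ($g$ reduced branches per factorization $(a,d)$ with $g=\gcd(a,d)$, each contributing $\min(a,d)/g$), which matches the paper's expansion of $\Phi_N$ into the factors indexed by $b$ modulo $g$.
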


In the above theorem, we treat only the case when $ M \in \{ 2, 3, 5, 6, 7, 8, 10, 12, 13 \} $ for simplicity.
In other remaining cases, we state similar results in Section \ref{sec:class_num_formula}.

%In the case $ M =  2, 3, 5, 6, 7, 8, 10, 12, 13 $, Theorem \ref{thm:main_class_num} follows from Theorem \ref{thm:main_Y_0(M)} and Theorem \ref{thm:main_cusp_intro}.

The most important part of Theorem \ref{thm:main_cusp_intro} is to calculate the intersection multiplicities at cusps in \ref{item:thm:main_cusp_intro}.
Although we can achieve it by using Atkin-Lehner involutions, they exist less than cusps for some levels and thus we introduce generalized Atkin-Lehner involutions.

Recently, Brunier-Schwagenscheidt gave various interesting formulas involving our generalized Hurwitz class numbers in a different context \cite[Example 4.2]{BruSch}. 
As the second author of loc.cit. commented to the author, it would be interesting to study any relation between our work and theirs.  

This paper will be organized as follows. 
In Section \ref{sec:known}, we summarize known results for our modular correspondences $ T_{N}^{\Gamma_0(M)} $.
In Section \ref{sec:G_0(M)}, we introduce a subgroup $ G_0(M) \subset \GL_2(\Q) $ which contains the above matrix $ A $.
In Section \ref{sec:involution}, for each cusp $ s $, we give explicitly a matrix $ W \in \GL_2(\Q) \cap M_2(\Z) $ which satisfies $ W(i\infty) = s $ and normalize $ \Gamma_0(M) $.
When $ M $ is square-free, each cusp is represented by the image of $ i\infty $ under an Atkin-Lehner involution.
However, this is not true when $ M $ is not square-free.
then there exists a cusp such that there does not exist an 
For this reason, we introduce a generalization of Atkin-Lehner involutions.
In Section \ref{sec:normalizer}, we give a condition whether such matrices introduced in Section \ref{sec:involution} normalize $ \Gamma_0(M) $.
In Section \ref{sec:action_on_cusps}, we classify cusps.
We calculate the intersection multiplicities at cusps in Section \ref{sec:mult_cusp} and prove Theorem \ref{thm:main_class_num} in Section \ref{sec:class_num_formula}.
In Section \ref{sec:computation}, we give explicit computation of the Hurwitz class number $ H^M(D) $ for $ M $ when $ 2 \le M \le 10 $ or $ M = 13 $.
In Section \ref{sec:exapmles}, we give some examples of Theorem \ref{thm:main_class_num} for small $ N $ and conjecture for a square $ N $.

%In Section \ref{sec:G_0(M)}, we define and consider a subgroup $ G_0(M) \subset \GL_2(\Q) $ and give a criterion of the above condition in  (\ref{eq:T_N_point_condition}).
%In Section \ref{sec:involution}, for each cusp $ s $ we enumerate a matrix $ W \in \GL_2(\Q) \cap M_2(\Z) $ which satisfies $ W(i\infty)=s $ and normalize $ \Gamma_0(M) $ and $ G_0(M) $.
%Typical such matrix is an Atkin-Lehner involution.
%However, in the case when $ M $ is not square-free, there exists a cusp such that there does not exist an Atkin-Lehner involution $ W $ with $ W(i\infty)=s $.
%In this reason, we introduce a type of matrices which is a generalization of Atkin-Lehner involutions.
%When $ M = 25 $, however, such matrices normalize neither $ \Gamma_0(M) $ nor $ G_0(M) $ and that is the reason why our formula in Theorem \ref{thm:main_class_num} is complicated in this case.
%In Section \ref{sec:normalizer}, we determine the normalizer of $ \Gamma_0(M) $ and $ G_0(M) $ and give a condition whether a generalization of Atkin-Lehner involutions introduced in Section \ref{sec:involution} normalizes $ \Gamma_0(M) $ or $ G_0(M) $.
%In Section \ref{sec:mult_cusp}, we calculate the intersection multiplicities at cusps.
%Using the results there, we prove Theorem \ref{thm:main_class_num} in Section \ref{sec:class_num_formula}.
%In Section \ref{sec:exapmles}, we give some examples for small $ N $ and conjecture for a square $ N $ of Theorem \ref{thm:main_class_num}.

% --------------------------------------------------------------------------

\section*{Acknowledgement} \label{sec:acknowledgement}

% --------------------------------------------------------------------------

I would like to show my greatest appreciation to Professor Takuya Yamauchi for giving many pieces of advice. 
I am deeply grateful to Dr.~Toshiki Matsusaka for giving many comments and pointing out that the 0th Hurwitz class number has a form $ -(p+1)/12 $ when a level is a prime number $ p $.
I would like to express my gratitude to Dr.~Markus Schwagenscheidt for telling me the relation between my work and theta lift.
The author is supported by JSPS KAKENHI Grant Number JP 20J20308.

% --------------------------------------------------------------------------

\section{Known results} \label{sec:known}

% --------------------------------------------------------------------------

In this section, we summarize results for modular correspondences in \cite{Mura}.
Let $ M $ be a positive integer such that the modular curve $ X_0(M) $ has genus zero.
In this case, there exists the unique isomorphism $ t \colon X_0(M) \xrightarrow{\sim} \bbP^1(\C) $ satisfying
$ \mathrm{div}(t) = (0) - (\infty) $ and having an expansion
\[
t(\tau) = q^{-1} + c_0 + c_1 q + \cdots \in q^{-1} + \Z[[q]] 
\]
with $ q := e^{2 \pi \iu \tau} $ for $ M>1 $. 
Such $ t $ is given as an explicit products of the Dedekind eta function in \cite[Table 1]{Mura} which refers to \cite[Subsection 3.1]{Mai}.
For $ M=1 $, we put $ t \colon X_0(1) \xrightarrow{\sim} \bbP^1(\C) $ as the $ j $-invariant.

The modular correspondence $ T_{N}^{\Gamma_0(M)} $ defined in (\ref{eq:def_of_T}) is an algebraic cycle in $ X_0(M) \times X_0(M) $ by the following theorem.
We remark that the definition of the modular correspondence $ T_{N}^{\Gamma_0(M)} $ in (\ref{eq:def_of_T}) differs from the original definition in \cite{Mura} but it is essentially the same.

\begin{theorem}[{\cite[Theorem 2.9]{Mura}}]\label{thm:main_mod_poly}
	For a positive integer $ N $ coprime to $ M $, the image of the modular correspondence $ T_{N}^{\Gamma_0(M)} $ under the map
	$ t \times t \colon X_0(M) \times X_0(M) \to \bbP^1(\C) \times \bbP^1(\C) $
	is the zero set of a polynomial
	$ \Phi_{N}^{\Gamma_0(M)}(X, Y) \in \Z[X, Y] $.
\end{theorem}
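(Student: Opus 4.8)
The plan is to exhibit the defining polynomial explicitly as a product and then verify its properties by the classical machinery of modular functions, adapted to level $ M $. Write $ \Delta_N^M $ for the set of matrices $ A = \smat{a & b \\ 0 & d} $ with $ a, d > 0 $, $ ad = N $ and $ 0 \le b < d $, so that by definition $ T_N^{\Gamma_0(M)} $ is the union of the graphs of the maps $ \tau \mapsto A(\tau) $. The candidate polynomial is
\[
\Phi_N^{\Gamma_0(M)}(t(\tau), Y) := \prod_{A \in \Delta_N^M} \bigl( Y - t(A(\tau)) \bigr),
\]
regarded as a polynomial in $ Y $ whose coefficients $ s_k(\tau) $ are, up to sign, the elementary symmetric functions of the values $ t(A(\tau)) $. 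By construction, the zero set in $ Y $ for fixed $ \tau $ is exactly the fibre of $ (t \times t)(T_N^{\Gamma_0(M)}) $ over $ t(\tau) $; since for generic $ X = t(\tau) $ these fibres recover the whole image, it suffices to show that each $ s_k $ is a polynomial in $ t(\tau) $ with integer coefficients.

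First I would establish $ \Gamma_0(M) $-invariance of the $ s_k $. For $ \gamma \in \Gamma_0(M) $, right multiplication $ A \mapsto A\gamma $ sends $ \Delta_N^M $ into the larger set of integral matrices of determinant $ N $. The point is that, because $ \gcd(N, M) = 1 $, each $ A\gamma $ can be left-multiplied by a suitable element of $ \Gamma_0(M) $ to return it to the upper-triangular Hermite form of an element of $ \Delta_N^M $, and this induces a permutation of $ \Delta_N^M $. Since $ t $ is $ \Gamma_0(M) $-invariant, $ t(A\gamma(\tau)) = t(A'(\tau)) $ for the corresponding $ A' \in \Delta_N^M $, so the multiset $ \{ t(A(\tau)) \}_A $ is unchanged under $ \tau \mapsto \gamma(\tau) $. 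Hence each $ s_k $ descends to a meromorphic function on $ X_0(M) $.

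Next, since $ X_0(M) $ has genus zero with function field $ \C(t) $, each $ s_k $ is a rational function of $ t(\tau) $. To see it is in fact a polynomial, I would examine the $ q $-expansions: writing $ A(\tau) = (a\tau + b)/d $, the local parameter at the relevant cusp involves $ e^{2\pi \iu b/d} $ and fractional powers of $ q $, and one checks that each $ t(A(\tau)) $, hence each $ s_k $, is holomorphic on $ \bbH $ with a pole only at the cusp $ i\infty $ of the source; since $ t $ also has its only pole there, $ s_k \in \C[t(\tau)] $. Integrality then follows from $ t \in q^{-1} + \Z[[q]] $: the $ t(A(\tau)) $ have $ q $-expansions with coefficients in $ \Z[\zeta_N] $, the symmetric functions $ s_k $ are fixed by $ \Gal(\Q(\zeta_N)/\Q) $ (which permutes the matrices with differing $ b $), so each $ s_k $ lies in $ \Q[[q]] \cap \overline{\Z}[[q]] = \Z[[q]] $; comparing with the integral expansions of powers of $ t $ forces the coefficients of $ \Phi_N^{\Gamma_0(M)} $ into $ \Z $.

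The main obstacle is the invariance step: one must check that the correction by $ \Gamma_0(M) $ really lands back inside $ \Gamma_0(M) $ and permutes $ \Delta_N^M $ without collision. This is precisely where coprimality of $ N $ and $ M $ is essential, since it guarantees that the reduction of $ A\gamma $ to Hermite form can be carried out by a matrix congruent to upper-triangular modulo $ M $; without it the symmetric functions would only be invariant under a smaller group and would fail to descend to $ X_0(M) $. The remaining pole and integrality analyses are routine $ q $-expansion bookkeeping once the coset combinatorics are in place.
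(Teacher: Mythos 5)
The paper does not actually prove this statement; it is imported wholesale from \cite[Theorem 2.9]{Mura}, so there is no internal proof to compare against. Your argument is the standard construction of the modular polynomial and is essentially correct: the coset computation (that right multiplication by $\gamma \in \Gamma_0(M)$ permutes $\Gamma_0(M)\backslash\{A \in M_2(\Z) : \det A = N,\ A \equiv \smat{* & * \\ 0 & *} \bmod M\}$, with $\Delta_N^M$ as representatives) does go through exactly because $(a,M)=1$ forces $\gcd(a,Mc)=\gcd(a,c)\mid c$, so the Hermite reduction can be performed inside $\Gamma_0(M)$; and the Galois/integrality step via $t \in q^{-1}+\Z[[q]]$ and $\zeta_d \mapsto \zeta_d^k$ permuting the $b$'s is routine. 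The one step you dispatch too quickly is the assertion that each $t(A(\tau))$ has a pole only above the cusp $i\infty$. For $M>1$ the curve $X_0(M)$ has several cusps, and $t$ has its pole only at $i\infty$, so you must verify that $A$ carries every cusp \emph{not} $\Gamma_0(M)$-equivalent to $i\infty$ to a cusp again not equivalent to $i\infty$; otherwise some $s_k$ would acquire a pole at another cusp and would only be a rational, not polynomial, function of $t$. This is true precisely because $(d,M)=1$: the reduced denominator of $A(p/q)$ is divisible by $M$ if and only if that of $p/q$ is (compare Proposition \ref{prop:s=A(s')}, which shows cusp pairs on $T_N^{\Gamma_0(M)}$ satisfy $(M,m)=(M,m')$), and it genuinely fails when $(N,M)>1$ (e.g.\ $M=N=2$, $A=\smat{1 & 1 \\ 0 & 2}$ sends $0$ to $1/2 \sim i\infty$). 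With that verification supplied, the proof is complete.
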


The polynomial $ \Phi_{N}^{\Gamma_0(M)}(X, Y) $ is called the modular polynomial of level $ M $ and degree $ N $.

The following theorem states that the intersection number of two modular correspondences on $ Y_0(M) \times Y_0(M) $ is equal to the left-hand side in Theorem \ref{thm:main_class_num}. 

\begin{theorem}[{\cite[Theorem 1.2]{Mura}}]\label{thm:main_Y_0(M)}
	For positive integers $ N_1 $ and $ N_2 $ coprime to $ M $,
	two algebraic cycles $T_{N_1}^{\Gamma_0(M)} $ and $ T_{N_2}^{\Gamma_0(M)}$ intersect properly if and only if the integer $N_1 N_2$ is not a square. 
	Moreover, in this case, the intersection number on $ Y_0(M) \times Y_0(M) $ is given as
	\begin{align*}
	(T_{N_1}^{\Gamma_0(M)} \cdot T_{N_2}^{\Gamma_0(M)})
	_{Y_0(M) \times Y_0(M)}
	&:= \sum_{(x_0, y_0) \in Y_0(M) \times Y_0(M)} (T_{N_1}^{\Gamma_0(M)} \cdot T_{N_2}^{\Gamma_0(M)})_{(x_0, y_0)} \\
	&=\sum_{x \in \Z,\ x^2 < 4N_1 N_2} \sum_{d \mid (N_1, N_2, x)}
	d \cdot H^M \left( \frac{4N_1 N_2- x^2}{d^2} \right).
	\end{align*}
	In particular, for a non-square positive integer $ N $ coprime to $ M $, we have
	\[
	(T_{1}^{\Gamma_0(M)} \cdot T_{N}^{\Gamma_0(M)})
	_{Y_0(M) \times Y_0(M)}
	= \sum_{x \in \Z,\ x^2 < 4N} 
	H^M \left( 4N- x^2 \right).
	\]
\end{theorem}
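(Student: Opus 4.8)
The plan is to identify the intersection points of $ T_{N_1}^{\Gamma_0(M)} $ and $ T_{N_2}^{\Gamma_0(M)} $ on $ Y_0(M) \times Y_0(M) $ with $ \Gamma_0(M) $-equivalence classes of fixed points of integral matrices of determinant $ N_1 N_2 $, and then to reorganize the resulting count into the class numbers $ H^M $. First I would unwind the definition \eqref{eq:def_of_T}: a point $ (\Gamma_0(M)\tau, \Gamma_0(M)\sigma) $ lies on $ T_{N_i}^{\Gamma_0(M)} $ precisely when $ \sigma \in \Gamma_0(M) A_i \tau $ for some upper-triangular $ A_i \in M_2(\Z) $ with $ \det A_i = N_i $ as in the union. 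A common point therefore produces $ \gamma \in \Gamma_0(M) $ with $ A_2 \tau = \gamma A_1 \tau $, so the integral matrix $ g := A_2^{*}\gamma A_1 $, where $ A_2^{*} = N_2 A_2^{-1} $ is the adjugate, fixes $ \tau \in \bbH $. Since scalars act trivially on $ \bbH $, this $ g $ has $ \det g = N_1 N_2 $ and fixes $ \tau $, and I would verify that the assignment $ (x_0,y_0) \mapsto (\tau, g) $ is well defined up to $ \Gamma_0(M) $-conjugacy (replacing $ \tau $ by $ \delta\tau $ replaces $ g $ by $ \delta g \delta^{-1} $) and independent of the choice of building blocks.

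Next I would translate this into quadratic forms. A matrix $ g = \smat{A & B \\ C & D} $ fixes a point of $ \bbH $ exactly when it is elliptic, i.e.\ $ (\Tr g)^2 < 4\det g = 4 N_1 N_2 $; writing $ x := \Tr g $ gives the stated range $ x^2 < 4N_1 N_2 $. The fixed point is the upper half-plane root of $ C\tau^2 + (D-A)\tau - B = 0 $, so $ g $ determines the form $ Q_g := [C, D-A, -B] $ with $ \disc Q_g = (\Tr g)^2 - 4\det g = -(4N_1 N_2 - x^2) $. A direct entrywise computation of $ g = A_2^{*}\gamma A_1 $ with $ A_1, A_2^{*} $ upper-triangular and the lower-left entry of $ \gamma $ divisible by $ M $ shows $ M \mid C $, so (after fixing orientation so $ C > 0 $) one has $ Q_g \in \calQ_{-(4N_1N_2-x^2),>0}^{M} $, exactly the set appearing in the definition of $ H^M $. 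Under $ \Gamma_0(M) $-conjugacy of $ g $ this matches the substitution action on $ Q_g $, and the centralizer of $ g $ in $ \Gamma_0(M) $ is the stabilizer $ \Gamma_0(M)_{Q_g} $; counting classes with the orbifold weight $ 2/\#\Gamma_0(M)_{Q_g} $ is precisely what assembles into $ H^M $.

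I would then handle the properness dichotomy and the divisor sum. The two cycles meet non-properly exactly when they share a component, which forces $ A_2 \tau = \gamma A_1 \tau $ identically in $ \tau $ along a curve; since these are Möbius transformations agreeing on infinitely many points, $ A_2^{*}\gamma A_1 $ is then scalar, and taking determinants forces $ N_1 N_2 $ to be a perfect square, with a matching shared component produced in the converse direction when $ N_1 N_2 $ is a square. Assuming $ N_1 N_2 $ is not a square, I would sort the finitely many forms by content: writing $ g = e\,g' $ with $ g' $ primitive gives $ \disc Q_g = e^2 \disc Q_{g'} $, the triangular building blocks constrain $ e \mid \gcd(N_1, N_2, x) $, and the number of admissible factorizations of a content-$ d $ matrix through $ A_1 $ and $ A_2 $ contributes the weight $ d $ in front of $ H^M\!\big((4N_1N_2 - x^2)/d^2\big) $. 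Summing over $ d \mid (N_1, N_2, x) $ and over admissible $ x $ yields the formula, and setting $ N_1 = 1 $ collapses the inner sum to the stated special case.

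The main obstacle I anticipate lies not in the form-counting but in the local intersection multiplicity at the elliptic fixed points. A priori the analytic branches of $ T_{N_1}^{\Gamma_0(M)} $ and $ T_{N_2}^{\Gamma_0(M)} $ through $ (x_0, y_0) $ need not be transverse, so I must compute the multiplicity in local coordinates on the orbifold $ Y_0(M) \times Y_0(M) $ and show it equals the number of distinct matrices $ g $ producing that fixed point, each after descent contributing a single transverse branch. Making the passage from the local analytic picture to the weighted global count rigorous — in particular reconciling tangencies at CM points with the orbifold factor $ 2/\#\Gamma_0(M)_{Q} $ — is where the real work is concentrated.
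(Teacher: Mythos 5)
The first thing to note is that the paper does not prove this statement at all: it is imported verbatim as \cite[Theorem 1.2]{Mura}, so there is no internal proof here to compare your argument against. Your outline --- identifying intersection points on $Y_0(M)\times Y_0(M)$ with $\Gamma_0(M)$-conjugacy classes of elliptic integral matrices $g=A_2^{*}\gamma A_1$ of determinant $N_1N_2$, passing to the forms $Q_g=[C,\,D-A,\,-B]$ with $M\mid C$, sorting by trace $x$ and content $d$, and settling properness by the scalar-matrix criterion --- is the standard Hurwitz--Eichler route and is surely in the spirit of the cited proof.

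That said, as a proof your proposal has two genuine gaps, the first of which you flag yourself. First, the whole identity hinges on showing that the local intersection multiplicity $(T_{N_1}^{\Gamma_0(M)}\cdot T_{N_2}^{\Gamma_0(M)})_{(x_0,y_0)}$ equals the weighted count $\sum 2/\#\Gamma_0(M)_{Q_g}$ over the matrices $g$ lying above that point; this requires computing in the hauptmodul coordinate $t$, accounting for the ramification of $\tau\mapsto t(\tau)$ at elliptic points (which is exactly where the fractional weights $1/2$ and $1/3$ in $H^M$ come from), and ruling out, or correctly measuring, higher-order tangency between distinct branches. Announcing that this is ``where the real work is concentrated'' leaves the theorem unproved at its most delicate point. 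Second, the combinatorial step assigning weight $d$ to imprimitive matrices --- that a matrix $g$ of content $d$ arises from exactly $d$ admissible triples $(A_1,\gamma,A_2)$, and that the possible contents are exactly the divisors of $(N_1,N_2,x)$ --- is asserted rather than verified, and it is precisely this count that produces the factor $d$ in front of $H^M\left((4N_1N_2-x^2)/d^2\right)$. Both points are standard but neither is automatic; since the paper simply defers to \cite{Mura} for them, your writeup would need to supply them to stand on its own.
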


Here we remark that $ T_{1}^{\Gamma_0(M)} $ is equal to the diagonal set $ \Delta $ by (\ref{eq:def_of_T}).

% --------------------------------------------------------------------------

\section{A subgroup $ G_0(M) $ in $ \GL_2(\Q) $} \label{sec:G_0(M)}

% --------------------------------------------------------------------------

In this section, we introduce and study the subgroup $ G_0(M) $ of $ \GL_2(\Q) $ which plays an important role in studying intersections of modular correspondences at cusps.

Let $ M $ be a positive integer. 

\begin{definition}
	Set
	\[
	\Z_{(M)} :=
	\left\{ \frac{a}{b} \in \Q \relmiddle{|} a, b \in \Z, (b, M) = 1 \right\},
	\]
	\[
	\GL_2(\Z_{(M)}) :=
	\left\{ \gamma \in M_2(\Z_{(M)}) \mid \det \gamma \in \Z_{(M)}^{\times} \right\},
	\]
	and
	\[
	G_0(M) :=
	\left\{ \gamma \in \GL_2(\Z_{(M)}) \relmiddle{|} 
	\gamma \equiv \begin{pmatrix} * & * \\ 0 & * \end{pmatrix}
	\bmod M \Z_{(M)} \right\}.
	\]
\end{definition}

If $ M $ is a prime number $ p $, then $ \Z_{(p)} $ is a localization of $ \Z $ at the prime ideal $ (p) := p \Z $. 
We also remark that $ \Gamma_0(M) = G_0(M) \cap \SL_2(\Z) $. 

Our aim in this section is to prove the following proposition which plays an important role in studying the action of $ G_0(M) $ on cusps in the next section.

\begin{proposition} \label{prop:str_G_0(M)}
	It holds that $ G_0(M) = \Gamma_0(M) G_0(M)_{i \infty} $.
\end{proposition}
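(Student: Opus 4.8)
The inclusion $\Gamma_0(M)\, G_0(M)_{i\infty} \subseteq G_0(M)$ is immediate, since $\Gamma_0(M)$ and $G_0(M)_{i\infty}$ both sit inside the group $G_0(M)$. For the reverse inclusion, my plan is to show that every $\gamma \in G_0(M)$ sends the cusp $i\infty$ to a point of $\bbP^1(\Q)$ that is $\Gamma_0(M)$-equivalent to $i\infty$; multiplying $\gamma$ on the left by a suitable $\gamma_0 \in \Gamma_0(M)$ then yields an element fixing $i\infty$. I would first record that $G_0(M)_{i\infty}$ is exactly the set of upper-triangular matrices in $G_0(M)$, because for $\gamma = \smat{a & b \\ c & d}$ one has $\gamma(i\infty) = a/c$, which equals $i\infty$ precisely when $c = 0$ (this holds regardless of the sign of $\det\gamma$, since only the limit $\tau \to i\infty$ matters).

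The key step is a divisibility statement. Write $\gamma = \smat{a & b \\ c & d}$ with $a,b,c,d \in \Z_{(M)}$, $\det\gamma \in \Z_{(M)}^\times$, and $c \in M\Z_{(M)}$, and assume $c \neq 0$ (if $c=0$ then $\gamma$ already fixes $i\infty$ and we take $\gamma_0 = I$). Let $v_p$ denote the $p$-adic valuation. For each prime $p \mid M$ we have $v_p(c) \geq v_p(M) \geq 1$ and hence $v_p(bc) \geq 1$, whereas $v_p(\det\gamma) = 0$; since $ad = \det\gamma + bc$, the ultrametric inequality (applied to two terms of distinct valuation) forces $v_p(ad) = 0$, and therefore $v_p(a) = 0$. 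Writing $\gamma(i\infty) = a/c = A/C$ in lowest terms with $A,C \in \Z$ and $\gcd(A,C) = 1$, we then get $v_p(C) = v_p(c) \geq v_p(M)$ for every $p \mid M$, so $M \mid C$. Because $\gcd(A,C) = 1$, the column $(A,C)^\top$ completes to a matrix $\gamma_0 = \smat{A & B \\ C & D} \in \SL_2(\Z)$; as $M \mid C$ this $\gamma_0$ lies in $\Gamma_0(M)$, and $\gamma_0(i\infty) = A/C = \gamma(i\infty)$. Consequently $u := \gamma_0^{-1}\gamma$ fixes $i\infty$ and lies in the group $G_0(M)$, so $u \in G_0(M)_{i\infty}$ and $\gamma = \gamma_0 u \in \Gamma_0(M)\, G_0(M)_{i\infty}$.

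The main obstacle is the divisibility $M \mid C$, for this is exactly the point where the two defining conditions of $G_0(M)$ must be combined: the condition $c \equiv 0 \bmod M\Z_{(M)}$ alone only bounds the denominator of the unreduced fraction $a/c$, while the unit condition $\det\gamma \in \Z_{(M)}^\times$ is what guarantees $v_p(a) = 0$ and thereby prevents cancellation in passing to lowest terms from lowering $v_p(C)$. Once this is established, the remaining steps are routine — the completion of a coprime integer pair to an element of $\SL_2(\Z)$, and the identification of the stabilizer of $i\infty$ with the upper-triangular subgroup — so I expect the valuation computation at the primes dividing $M$ to be the substantive part of the argument.
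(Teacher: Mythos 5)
Your proof is correct and takes essentially the same route as the paper's: both construct a matrix of $\Gamma_0(M)$ whose first column is the reduced form of the column $(a,c)^{\top}$ (equivalently, of $\gamma(i\infty)$ written in lowest terms), the crucial point in each case being that $a$ is a unit at every prime dividing $M$ because the determinant is. The paper packages this via the localized gcd $(a,c)\in\Z_{(M)}^{\times}$ (Lemma \ref{lem:gcd}) where you use $p$-adic valuations, but the content is identical, and your write-up merely makes explicit the final factorization step $\gamma=\gamma_0 u$ that the paper leaves implicit.
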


Before giving a proof, we prepare the followings. 

\begin{definition}
	For rational numbers $ a $ and $ b $, we have a unique rational number $ g \in \Q_{\ge 0} $ such that $ a \Z + b \Z = g \Z $.
	We put $ (a, b) := g $ and call it the greatest common divisor of $ a $ and $ b $.
\end{definition}

If $ g \neq 0 $, then $ a/g, b/g \in \Z $.
In the case when both $ a $ and $ b $ are integers, the above $ (a, b) $ is the usual greatest common divisor of $ a $ and $ b $.

The following property of the greatest common divisor is quite elementary.

\begin{lemma} \label{lem:gcd}
	For rational numbers $ a \in \Z_{(M)}^{\times} $ and $ b \in \Z_{(M)} $, we have $ (a, b) \in \Z_{(M)}^{\times} $. 
\end{lemma}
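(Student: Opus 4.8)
The plan is to work directly from the defining property $a\Z + b\Z = g\Z$ of the greatest common divisor $g := (a,b)$, together with the fact that membership in $\Z_{(M)}^{\times}$ means precisely that both $g$ and $g^{-1}$ lie in $\Z_{(M)}$. Since $a \in \Z_{(M)}^{\times}$ we have $a \neq 0$, so $a\Z + b\Z \neq 0$ and hence $g \neq 0$; in particular $a/g$ and $b/g$ lie in $\Z$ because $a, b \in g\Z$. This nonvanishing is what makes the subsequent divisions by $g$ legitimate.

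First I would show $g \in \Z_{(M)}$. As $g \in a\Z + b\Z$, we may write $g = ax + by$ for some $x, y \in \Z$. Since $\Z_{(M)}$ is a ring containing $a$, $b$, and all of $\Z$, the element $g$ is a $\Z_{(M)}$-linear combination of elements of $\Z_{(M)}$ and therefore lies in $\Z_{(M)}$.

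Next I would show that $g$ is a unit, i.e. $g^{-1} \in \Z_{(M)}$. From $a/g \in \Z$ I can factor $g^{-1} = (a/g) \cdot a^{-1}$. Here $a/g \in \Z \subseteq \Z_{(M)}$, while $a^{-1} \in \Z_{(M)}$ because $a \in \Z_{(M)}^{\times}$; hence their product $g^{-1}$ lies in $\Z_{(M)}$. Combining this with the previous paragraph yields $g \in \Z_{(M)}^{\times}$, which is the claim.

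Because every step is a direct application of the ring axioms and the definitions, there is no genuine obstacle here; the only point demanding a moment of care is the verification that $g \neq 0$ before dividing by it, which is exactly where the hypothesis that $a$ is a unit (rather than an arbitrary element of $\Z_{(M)}$) is used. As an alternative phrasing, one could argue one prime at a time using $v_p((a,b)) = \min(v_p(a), v_p(b))$ and the characterization of $\Z_{(M)}^{\times}$ as those rationals with $v_p = 0$ for every $p \mid M$: for such $p$ one has $\min(v_p(a), v_p(b)) = \min(0, v_p(b)) = 0$ since $v_p(a) = 0$ and $v_p(b) \geq 0$, giving the same conclusion.
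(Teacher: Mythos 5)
Your proof is correct and is essentially the paper's own argument, just written out in more detail: the paper also notes $g\in\Z_{(M)}$ and then deduces $g\in\Z_{(M)}^{\times}$ from $g^{-1}a\in\Z$, which is exactly your factorization $g^{-1}=(a/g)\cdot a^{-1}$. The extra care about $g\neq 0$ and the alternative valuation-theoretic phrasing are fine but do not change the substance.
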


\begin{proof}
	Let $ g := (a, b) \in \Z_{(M)} $.
	Since $ g^{-1} a \in \Z $, we have $ g \in \Z_{(M)}^{\times} $. 
\end{proof}

\begin{proof}[Proof of Proposition $ \ref{prop:str_G_0(M)} $]
	For a matrix
	\[
	A = \begin{pmatrix} a & b \\ Mc & d \end{pmatrix} \in G_0(M),
	\]
	we have $ a \in \Z_{(M)}^{\times} $
	since $ D := ad - Mbc \in \Z_{(M)}^{\times} $. 
	By Lemma \ref{lem:gcd}, we have $ (a, c) \in \Z_{(M)}^{\times} $. 
	Thus there exists a matrix
	\[
	\gamma
	= \begin{pmatrix} a/(a, c) & * \\ Mc/(a, c) & * \end{pmatrix}
	\in \Gamma_0(M).
	\]
	We have $ A(i\infty) = \gamma(i\infty) $.
\end{proof}

% --------------------------------------------------------------------------

\section{Cusps and Atkin-Lehner involutions} \label{sec:involution}

% --------------------------------------------------------------------------

In this section, for each cusp $ s $ in $ X_0(M) $, we consider whether there exists a matrix $ W \in \mathrm{SL}_2(\R) $ which satisfies $ W(i\infty)=s $ and normalize both of $ \Gamma_0(M) $ and $ G_0(M) $.
Since all cusps are expressed as the form $ m/M $ with an integer $ 0 \le m < M $, we need a matrix $ W \in \SL_2(\R) $ with the form
\[
\frac{1}{\sqrt{D}} \begin{pmatrix} m & u \\ M & v \end{pmatrix}.
\]
Typical such matrices are Atkin-Lehner involutions. 

\begin{definition}
	For a positive divisor $ m $ of a positive integer $ M $ such that $ (m, M/m)=1 $, there exist integers $ u, v $ such that $ mv - Mu/m = 1 $. 
	We denote
	\[
	W_m = W_m^M := \frac{1}{\sqrt{m}} \begin{pmatrix} m & u \\ M & mv \end{pmatrix} \in \SL_2(\R).
	\]
	For $ m = 0 $, we set
	\begin{equation} \label{eq:Atkin-Lehner} 
	W_0 = W_0^M := \frac{1}{\sqrt{M}} \begin{pmatrix} 0 & -1 \\ M & 0 \end{pmatrix} \in \SL_2(\R).
	\end{equation}
	We call them \textbf{Atkin-Lehner involutions}. 	
\end{definition}

We can check that Atkin-Lehner involutions normalize $ \Gamma_0(M) $ and $ G_0(M) $ by direct calculation.

If $ M $ is square-free, one can find an Atkin-Lehner involution $ W $ such that $ W(i\infty) = s $ for each cusp $ s \in X_0(M) $.
However, this is not true if $ M $ is not square-free.

For this reason, we introduce the following generalization of Atkin-Lehner involutions.

\begin{definition} \label{def:other_aut}
	For a positive integer $ M $ and an integer $ m $, let $ D := (M, m^2) $. Take $ u $ and $ v $ such that $ (M, m)mv - Mu = D $ and sert
	We set
	\[
	W_m = W_m^M := \frac{1}{\sqrt{D}} 
	\begin{pmatrix} m & u \\ M & (M, m)v \end{pmatrix}
	\in \SL_2(\R).
	\]
	It is called a \textbf{generalized Atkin-Lehner involution}. 	
	%	
	%	By Proposition $ \ref{prop:aut_of_X_0(M)} $, we have
	%	$ W_m \in N_{\SL_2(\R)}(\Gamma_0^{(M')}(M)) $ for $ M' := M/(M, m) $.
\end{definition}

In the case when $ m $ is a positive divisor of $ M $ such that $ (m, M/m)=1 $,
$ W_m $ is an Atkin-Lehner involution. 

In general, generalized Atkin-Lehner involutions do not normalize neither $ \Gamma_0(M) $ nor $ G_0(M) $.
For example, if $ M=25 $ and $ (m, 25) = 5 $, then it turns out that
$ W_m^M $ do not normalize neither $ \Gamma_0(M) $ nor $ G_0(M) $ in Section \ref{sec:normalizer}.
To calculate the intersection multiplicity of the modular correspondences at a pair of cusps in such a case, we define the following subgroup of $ \Gamma_0(M) $.

\begin{definition}
	For a positive integer $ M $ and its positive divisor $ M' $, put
	\[
	\Gamma_0^{(M')}(M) := 
	\left\{ \begin{pmatrix} a & * \\ * & d \end{pmatrix} \in \Gamma_0(M)  \relmiddle| a \equiv d \bmod M' \right\}.
	\]
	This group is a congruence subgroup of level $ M $ since
	$ \Gamma(M) \subset \Gamma_0^{(M')}(M) $. 
	We put $ X_0^{(M')}(M) := \Gamma_0^{(M')}(M) \backslash (\Q \cup \{ i \infty \}) \cup \bbH $ be the associated modular curve. 
\end{definition}

For example, $ \Gamma_0^{(1)}(M) = \Gamma_0(M) $. 

To conclude this section, we enumerate cusps $ m/M $ and generalized Atkin-Lehner involutions $ W_m^M $ when the genus of the modular curve $ X_0(M) $ is zero, that is exactly when $ 1 \le M \le 10 $ or $ M \in \{ 12, 13, 16, 18, 25 \} $.

Here we remark that $ i\infty $ and $ 1/M $ are $ \Gamma_0(M) $-equivalent.
When $ M=1 $, $ X_0(1) $ has only one cusp $ i\infty $ and $ X_0(M) $ has two cusps $ i\infty $ and $ 0 $ for a prime number $ M $. 
For a composite number $ M $, $ X_0(M) $ has one or more cusps except for $ i\infty, 0 $. 

In the case when $ M $ is a composite number, we list the cusps in $ X_0(M) $ in Table \ref{tab:cusps} and Atkin-Lehner involutions $ W_m^M $ for $ m \neq 1, M $ in Table \ref{tab:Atkin-Lehner}.
In the case when $ M $ is not a prime number nor a product of two prime numbers, that is, $ M \in \{ 4, 8, 9, 12, 16, 18, 25 \} $,
in Table \ref{tab:other_involution} we compile generalized Atkin-Lehner involutions for cusps $ s = m/M $ in $ X_0(M) $ which is not an image of $ i\infty $ under any Atkin-Lehner involution.

\begin{table}[htb] %bは下部に Table を配置. 上部ならt, その場ならh
	\caption{Cusps except for $ i\infty $ and $ 0 $ in $ X_0(M) $ with a composite number $ M $}
	\label{tab:cusps}
	\centering
	\begin{minipage}[t]{.25\textwidth}
		\begin{tabular}{cl} 	
			\hline\noalign{\smallskip}
			$M$ & Cusps \\ 	
			\hline\noalign{\smallskip}
			4 & $ \dfrac{1}{2} $ \rule[0mm]{0mm}{7mm} \\ 		%\rule[0mm]{0mm}{7mm}は行の底を+7mm
			6 & $ \dfrac{1}{2}, \dfrac{1}{3} $ \rule[0mm]{0mm}{7mm} \\
			8 & $ \dfrac{1}{2}, \dfrac{1}{4} $ \rule[0mm]{0mm}{7mm} \\
			9 & $ \dfrac{1}{3}, \dfrac{2}{3} $ \rule[0mm]{0mm}{7mm} \\
			10 & $ \dfrac{1}{2}, \dfrac{1}{5} $ \rule[-4mm]{0mm}{11mm} \\
			\hline
		\end{tabular}
	\end{minipage}
	\begin{minipage}[t]{.25\textwidth}
		\begin{tabular}{cl} 	
			\hline\noalign{\smallskip}
			$M$ & Cusps \\ 	
			\hline\noalign{\smallskip}
			12 & $ \dfrac{1}{2}, \dfrac{1}{3}, \dfrac{1}{4}, \dfrac{1}{6} $ \rule[0mm]{0mm}{7mm} \\
			16 & $ \dfrac{1}{2}, \dfrac{1}{4}, \dfrac{3}{4}, \dfrac{1}{8} $ \rule[0mm]{0mm}{7mm} \\
			18 & $ \dfrac{1}{2}, \dfrac{1}{3}, \dfrac{2}{3}, \dfrac{1}{6}, \dfrac{5}{6}, \dfrac{1}{9} $ \rule[0mm]{0mm}{7mm} \\
			25 & $ \dfrac{1}{5}, \dfrac{2}{5}, \dfrac{3}{5}, \dfrac{4}{5} $ \rule[-4mm]{0mm}{11mm} \\ %行の中心を4㎜下にして底を+11mm 
			& \rule[0mm]{0mm}{8mm} \\ 		
			\hline\noalign{\smallskip}
		\end{tabular}
	\end{minipage}
\end{table}

\begin{table}[htb] %bは下部に Table を配置. 上部ならt, その場ならh
	\caption{Atkin-Lehner involutions for $ m \neq 1, M $}
	\label{tab:Atkin-Lehner}
	\centering
	\begin{tabular}{cll} 	
		\hline\noalign{\smallskip}
		$M$ & A cusp $ s = m/M $ & 
		An Atkin-Lehner involution $ W_m = W_m^M $ \\ 	
		\hline\noalign{\smallskip}
		6 & $ \dfrac{1}{2} = \dfrac{3}{6}, \, \dfrac{1}{3} = \dfrac{2}{6} $ &
		$ W_3 = \dfrac{1}{\sqrt{3}} 
		\begin{pmatrix} 3 & 1 \\ 6 & 3 \end{pmatrix}, \quad  
		W_4 = \dfrac{1}{\sqrt{2}} 
		\begin{pmatrix} 2 & 1 \\ 6 & 4 \end{pmatrix} $  \\
		10 & $ \dfrac{1}{2} = \dfrac{5}{10}, \, \dfrac{1}{5} = \dfrac{2}{10} $ &
		$ W_5 = \dfrac{1}{\sqrt{5}} 
		\begin{pmatrix} 5 & 2 \\ 10 & 5 \end{pmatrix}, \quad  
		W_2 = \dfrac{1}{\sqrt{2}} 
		\begin{pmatrix} 2 & 1 \\ 10 & 6 \end{pmatrix} $  \\
		12 & $ \dfrac{1}{3} = \dfrac{4}{12}, \, \dfrac{1}{4} = \dfrac{3}{12} $ &
		$ W_4 = \dfrac{1}{2} 
		\begin{pmatrix} 4 & 1 \\ 12 & 4 \end{pmatrix}, \quad  
		W_3 = \dfrac{1}{\sqrt{3}} 
		\begin{pmatrix} 3 & 2 \\ 12 & 9 \end{pmatrix} $  \\
		18 & $ \dfrac{1}{2} = \dfrac{9}{18}, \, \dfrac{1}{9} = \dfrac{2}{18} $ &
		$ W_9 = \dfrac{1}{3} 
		\begin{pmatrix} 9 & 4 \\ 18 & 9 \end{pmatrix}, \quad   
		W_2 = \dfrac{1}{\sqrt{2}} 
		\begin{pmatrix} 2 & 1 \\ 18 & 10 \end{pmatrix}$ \\	
		\hline\noalign{\smallskip}
	\end{tabular}
\end{table}

\begin{table}[htb] %bは下部に Table を配置. 上部ならt, その場ならh
	\caption{Generalized Atkin-Lehner involutions for cusps $ \notin \{ 0, \infty, \frac{1}{p^e}, \frac{1}{q^f} \} $ when $M=p^e q^f$}
	\label{tab:other_involution}
	\centering
	\begin{tabular}{cll} 
		\hline\noalign{\smallskip}
		$M$ & A cusp $ s = m/M $ 
		& A generalized Atkin-Lehner involution $ W_m = W_m^M $ \\ 		
		\hline\noalign{\smallskip}
		4 & $ \dfrac{1}{2} = \dfrac{2}{4} $ & 
		$ W_2 = \dfrac{1}{2} 
		\begin{pmatrix} 2 & 2 \\ 4 & 6 \end{pmatrix} $ \\
		8 & $ \dfrac{1}{2} = \dfrac{4}{8}, \, \dfrac{1}{4} = \dfrac{2}{8} $ 
		& $ W_4 = \dfrac{1}{2\sqrt{2}} 
		\begin{pmatrix} 4 & 1 \\ 8 & 4 \end{pmatrix}, \quad  
		W_2 = \dfrac{1}{2} \begin{pmatrix} 2 & 1 \\ 8 & 6 \end{pmatrix} $ \\
		9 & $ \dfrac{1}{3} = \dfrac{3}{9}, \, \dfrac{2}{3} = \dfrac{6}{9} $ 
		& $ W_3 = \dfrac{1}{3} \begin{pmatrix} 3 & 2 \\ 9 & 9 \end{pmatrix}, \quad  
		W_6 = \dfrac{1}{3} \begin{pmatrix} 6 & 1 \\ 9 & 3 \end{pmatrix} $ \\
		12 & $ \dfrac{1}{2} = \dfrac{6}{12}, \, \dfrac{1}{6} = \dfrac{2}{12} $ 
		& $ W_6 = \dfrac{1}{2\sqrt{3}} 
		\begin{pmatrix} 6 & 2 \\ 12 & 6 \end{pmatrix}, \quad  
		W_2 = \dfrac{1}{2} \begin{pmatrix} 2 & 1 \\ 12 & 8 \end{pmatrix} $ \\
		16 & $ \dfrac{1}{2} = \dfrac{8}{16}, \, \dfrac{1}{4} = \dfrac{4}{16},$   
		& $ W_8 = \dfrac{1}{4} \begin{pmatrix} 8 & 1 \\ 16 & 4 \end{pmatrix}, \quad  
		W_4 = \dfrac{1}{4} \begin{pmatrix} 4 & 1 \\ 16 & 8 \end{pmatrix},$ \\
		& $ \dfrac{3}{4} = \dfrac{12}{16}, \dfrac{1}{8} = \dfrac{2}{16} $ &
		$ W_{12} = \dfrac{1}{4} \begin{pmatrix} 12 & 2 \\ 16 & 4 \end{pmatrix}, \quad  
		W_2 = \dfrac{1}{2} \begin{pmatrix} 2 & 1 \\ 16 & 10 \end{pmatrix} $ \\
		18 & $ \dfrac{1}{3} = \dfrac{6}{18}, \, \dfrac{2}{3} = \dfrac{12}{18}, $  
		& $ W_6 = \dfrac{1}{3\sqrt{2}} 
		\begin{pmatrix} 6 & 1 \\ 18 & 6 \end{pmatrix}, \quad  
		W_{12} = \dfrac{1}{3\sqrt{2}} 
		\begin{pmatrix} 12 & 3 \\ 18 & 6 \end{pmatrix},$ \\
		& $\dfrac{1}{6} = \dfrac{3}{18}, \, \dfrac{5}{6} = \dfrac{15}{18} $  &
		$ W_3 = \dfrac{1}{3} \begin{pmatrix} 3 & 1 \\ 18 & 9 \end{pmatrix}, \quad  
		W_{15} = \dfrac{1}{3} \begin{pmatrix} 15 & 2 \\ 18 & 3 \end{pmatrix} $ \\
		25 & $ \dfrac{1}{5} = \dfrac{5}{25}, \, \dfrac{2}{5} = \dfrac{10}{25},  $
		&  $ W_5 = \dfrac{1}{5} 
		\begin{pmatrix} 5 & 1 \\ 25 & 10 \end{pmatrix}, \quad  
		W_{10} = \dfrac{1}{5} 
		\begin{pmatrix} 10 & 1 \\ 25 & 5 \end{pmatrix},$ \\
		& $ \dfrac{3}{5} = \dfrac{15}{25}, \, \dfrac{4}{5} = \dfrac{20}{25} $  &
		$ W_{15} = \dfrac{1}{5} 
		\begin{pmatrix} 15 & 2 \\ 25 & 5 \end{pmatrix}, \quad  
		W_{20} = \dfrac{1}{5} 
		\begin{pmatrix} 20 & 3 \\ 25 & 5 \end{pmatrix} $ \rule[-4mm]{0mm}{11mm} \\	
		\hline\noalign{\smallskip}
	\end{tabular}
\end{table}

% --------------------------------------------------------------------------

\section{The normalizer of $ \Gamma_0^{(M')}(M) $} \label{sec:normalizer}

% --------------------------------------------------------------------------

In this section, we study the normalizers of $ \Gamma_0(M) $, $ G_0(M) $, and the congruence subgroup $ \Gamma_0^{(M')}(M) $ in $ \SL_2(\Z) $ and give conditions whether generalized Atkin-Lehner involutions normalize them.
Throughout this section, we fix positive integers $ M, f $ and $ M_0 $ such that $ M = f^2 M_0 $ and $ M_0 $ is square-free.

Firstly, we prepare the following subgroup of $ \SL_2(\R) $ which turns out to be the normalizer of $ \Gamma_0^{(M')}(M) $ later in this section.

\begin{definition}
	For a positive integer $ h $ with $ h^2 \mid M $, that is, $ h \mid f $, define
	\begin{align*}
	\Gamma_0^{*, h}(M) &:= \left\{
	\frac{1}{\sqrt{e}} \pmat{ ep & q/h \\ Mr/h & es } \in \SL_2(\R)
	\relmiddle|
	\begin{array}{l}
	e \in \Z_{>0}, e \mid M/h^2, \\
	p, q, r, s \in \Z
	\end{array}
	\right\}, \\
	G_0^{*, h}(M) &:= \left\{
	\frac{1}{\sqrt{e}} \pmat{ ep & q/h \\ Mr/h & es } \in \SL_2(\R)
	\relmiddle|
	\begin{array}{l}
	e \in \Z_{>0}, e \mid M/h^2, \\
	p, q, r, s \in \Z_{(M)}
	\end{array}
	\right\}.
	\end{align*}
	%	Also we define $ \Gamma_0^{*}(M) := \Gamma_0^{*, 1}(M) $ and $ G_0^{*}(M) := G_0^{*, 1}(M) $.
\end{definition}

We give several remarks for $ \Gamma_0^{*, h}(M) $.

\begin{remark}
	\begin{enumerate}
		\item The symbol $ \Gamma_0^{*, h}(M) $ is introduced in \cite[Definition 1.7]{Zemel}.
		\item The subset $ \Gamma_0^{*, h}(M) $ is a subgroup of $ \SL_2(\R) $ by \cite[Proposition 1.2 (i)]{Zemel} and the same argument shows that $ G_0^{*, h}(M) $ is a subgroup of $ \SL_2(\R) $.
		\item For a fixed integer $ h $ with $ h^2 \mid M $ and a matrix 
		\[
		\frac{1}{\sqrt{e}} \pmat{ ep & q/h \\ Mr/h & es } 
		\in \Gamma_0^{*, h}(M),
		\]
		the positive integer $ e $ is uniquely determined and called eterminant in \cite{AS}.
		\item The group $ \Gamma_0^{*, 1}(M) $ is generated by $ \Gamma_0(M) $ and Atkin-Lehner involutions and it is usually written as $ \Gamma_0^*(M) $.
	\end{enumerate}
\end{remark}

The following states whether generalized Atkin-Lehner involutions are in $ \Gamma_0^{*, h}(M) $.

\begin{lemma} \label{lem:gen_A-L_in_G*}
	For a positive integer $ m $ and a generalized Atkin-Lehner involution $ W_m^M $, we have $ W_m^M \in \Gamma_0^{*, (f, m)}(M) $.
\end{lemma}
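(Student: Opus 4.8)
The plan is to exhibit $W_m^M$ explicitly in the defining shape of $\Gamma_0^{*,h}(M)$ for $h:=(f,m)$. First note that since $h=(f,m)$ divides $f$, we have $h^2\mid f^2\mid M$, so the group $\Gamma_0^{*,h}(M)$ is defined. Recall from Definition \ref{def:other_aut} that $D=(M,m^2)$ and that the lower-left entry of $W_m^M$ is $M/\sqrt{D}$. Comparing this with the lower-left entry $Mr/(h\sqrt{e})$ of a general element of $\Gamma_0^{*,h}(M)$ suggests taking $r=1$ and $\sqrt{D}=h\sqrt{e}$, i.e. the natural candidate for the eterminant is
\[
e := \frac{D}{h^2} = \frac{(M,m^2)}{(f,m)^2}.
\]
Granting that $e$ is a positive integer, one rewrites
\[
W_m^M = \frac{1}{\sqrt{D}}\pmat{ m & u \\ M & (M,m)v } = \frac{1}{\sqrt{e}}\pmat{ e\cdot\tfrac{mh}{D} & u/h \\ M/h & e\cdot\tfrac{(M,m)vh}{D} },
\]
so the required parameters should be $p=mh/D$, $q=u$, $r=1$, and $s=(M,m)vh/D$. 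Since $u,v\in\Z$ by construction and the matrix already lies in $\SL_2(\R)$ by Definition \ref{def:other_aut}, its determinant is automatically $1$; hence the only things to check are that $e$, $p$, $s$ are integers and that $e\mid M/h^2$.

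These are all local statements, so I would verify them one prime $\ell$ at a time using the $\ell$-adic valuation $v_\ell$. The key input is that, because $M=f^2M_0$ with $M_0$ squarefree, one has $v_\ell(f)=\lfloor v_\ell(M)/2\rfloor$ for every prime $\ell$. Writing $a:=v_\ell(m)$, $b:=v_\ell(M)$ and $g:=v_\ell(f)=\lfloor b/2\rfloor$, one computes
\[
v_\ell(h)=\min(g,a), \quad v_\ell(D)=\min(2a,b), \quad v_\ell\bigl((M,m)\bigr)=\min(a,b).
\]
The verification then splits into the two cases $a\le g$ and $a>g$. In the first case $2a\le b$, and all three quantities $v_\ell(e)$, $v_\ell(p)$, $v_\ell(s)$ come out equal to $0$ (up to the nonnegative contribution $v_\ell(v)$ in $s$), while $v_\ell(M/h^2)=b-2a\ge 0$. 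In the second case $a>\lfloor b/2\rfloor$ forces $2a>b$, so $v_\ell(D)=b$ and short computations give $v_\ell(e)=b-2g\in\{0,1\}$ with $v_\ell(e)\le v_\ell(M/h^2)=b-2g$, as well as $v_\ell(p)=a+g-b\ge 0$ and $v_\ell(s)\ge 0$.

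The main obstacle is this case analysis, in particular confirming nonnegativity of $v_\ell(p)$ and $v_\ell(s)$ in the regime $a>g$: there one must combine the inequality $a\ge\lfloor b/2\rfloor+1$ with the parity of $b$ (equivalently, whether $\ell\mid M_0$) to see that $a+g-b\ge 0$. Once these inequalities are settled, $e=D/h^2$ is a positive divisor of $M/h^2$ and $p,q,r,s\in\Z$, so the displayed factorization exhibits $W_m^M$ as an element of $\Gamma_0^{*,(f,m)}(M)$, completing the proof.
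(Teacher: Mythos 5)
Your proposal is correct and follows essentially the same route as the paper: both take the candidate eterminant $e = (M,m^2)/(f,m)^2$ and rewrite $W_m^M$ in the defining shape of $\Gamma_0^{*,(f,m)}(M)$ with $q=u$, $r=1$. The only difference is in the bookkeeping: the paper verifies the integrality and divisibility conditions at once via the closed form $e = (M_0, m/(f,m))$ (using that $(f/(f,m))^2$ and $(m/(f,m))^2$ are coprime and $M_0$ is square-free), whereas you check the same facts prime by prime with $\ell$-adic valuations; your case analysis is sound.
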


\begin{proof}
	Let
	\[
	W_m^M = \frac{1}{\sqrt{D}} \pmat{ m & u \\ M & (M, m)v }, \quad
	D := (M, m^2).
	\]
	Here we put $ e := D/(f, m)^2, m' := m/(f, m) $.
	Then we have
	\[
	W_m^M = \frac{1}{\sqrt{e}} \pmat{ m' & u/(f, m) \\ M/(f, m) & (M/(f, m), m')v }
	\]
	and $ e = (M_0 f^2/(f, m)^2, m^{\prime 2}) $.
	Since $ f^2/(f, m)^2 $ and $ m^{\prime 2} $ are coprime and $ M_0 $ is square-free, we have $ e = (M_0, m') $.
	Thus $ e \mid (M/(f, m), m') $ and $ W_m^M \in \Gamma_0^{*, (f, m)}(M) $.
\end{proof}

Here we remark that Lemma \ref{lem:gen_A-L_in_G*} does not cover the fact that Atkin-Lehner involutions are in $ \Gamma_0^{*}(M) $.

Secondly, we compare $ \Gamma_0^{*, h}(M) $ and $ \Gamma_0^{*, h'}(M) $.

\begin{lemma} \label{lem:G*_h_include}
	If $ h \mid h' $ then $ \Gamma_0^{*, h}(M) \subset \Gamma_0^{*, h'}(M) $ and $ G_0^{*, h}(M) \subset G_0^{*, h'}(M) $.
\end{lemma}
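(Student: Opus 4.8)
The plan is to take an arbitrary element of $ \Gamma_0^{*, h}(M) $ and rewrite it in the shape required for membership in $ \Gamma_0^{*, h'}(M) $. Write $ h' = hk $ with $ k \in \Z_{>0} $, and set $ n := M/h^2 $ and $ n' := M/h'^2 $, so that $ n = k^2 n' $. Given
\[
\gamma = \frac{1}{\sqrt{e}} \pmat{ ep & q/h \\ Mr/h & es } \in \Gamma_0^{*, h}(M)
\]
with $ e \mid n $ and $ p, q, r, s \in \Z $, the naive substitution $ q \mapsto qk $, $ r \mapsto rk $ (keeping $ e, p, s $ unchanged) already exhibits $ \gamma $ in $ h' $-shape; the only obstruction is that the eterminant $ e $ need not divide $ n' $. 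So the real task is to replace $ e $ by a suitable divisor $ e' \mid n' $ and compensate on the remaining entries.

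The key observation is that $ e $ is automatically a \emph{unitary} divisor of $ n $: expanding $ \det \gamma = 1 $ gives $ e p s - (n/e) q r = 1 $, whence any common divisor of $ e $ and $ n/e $ divides $ 1 $, i.e. $ \gcd(e, n/e) = 1 $, so that $ v_p(e) \in \{ 0, v_p(n) \} $ for every prime $ p $. I then define $ e' $ to be the divisor of $ n' $ with $ v_p(e') = v_p(n') $ when $ p \mid e $ and $ v_p(e') = 0 $ otherwise; clearly $ e' \mid n' = M/h'^2 $. Using $ v_p(n) = v_p(n') + 2 v_p(k) $, a prime-by-prime comparison shows that $ e/e' $ has even $ p $-adic valuation at every prime, hence $ e/e' = t^2 $ for a positive integer $ t = \prod_{p \mid e} p^{v_p(k)} $, which moreover satisfies $ t \mid k $.

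Finally I set $ p' = tp $, $ s' = ts $, $ q' = (k/t) q $, and $ r' = (k/t) r $, all integers since $ t \mid k $, and verify the entrywise identity
\[
\frac{1}{\sqrt{e}} \pmat{ ep & q/h \\ Mr/h & es }
= \frac{1}{\sqrt{e'}} \pmat{ e'p' & q'/h' \\ Mr'/h' & e's' },
\]
using $ \sqrt{e'}\, t = \sqrt{e} $ on the diagonal and $ h' = hk $, $ k/t = k\sqrt{e'/e} $ off the diagonal. Together with $ e' \mid M/h'^2 $ this places $ \gamma $ in $ \Gamma_0^{*, h'}(M) $. Since $ t $ and $ k/t $ are integers, the identical substitution proves $ G_0^{*, h}(M) \subset G_0^{*, h'}(M) $ as well, the only change being that $ p, q, r, s $, and hence $ p', q', r', s' $, lie in $ \Z_{(M)} $. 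The main obstacle is the eterminant reduction of the middle step: it is precisely the unitarity of $ e $ forced by $ \det \gamma = 1 $ that makes $ e/e' $ a perfect square, which is exactly what keeps the diagonal relations $ \sqrt{e'}\,p' = \sqrt{e}\,p $ and $ \sqrt{e'}\,s' = \sqrt{e}\,s $ integral.
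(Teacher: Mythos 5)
Your proof is correct, and it arrives at the same reduction as the paper by a noticeably different mechanism. Both arguments rewrite the matrix with a smaller eterminant $e' = e/t^2$ and absorb $t$ into the entries via $p \mapsto tp$, $s \mapsto ts$, $q \mapsto (h'/h)q/t$, $r \mapsto (h'/h)r/t$; the difference is how the square factor $t^2$ is identified and how $e' \mid M/h'^2$ is verified. The paper writes $e = g^2 e_0$ with $e_0$ squarefree, takes $t = (g, h'/h)$, and then proves $e' \mid M/h'^2$ through a chain of gcd manipulations involving the squarefree part $M_0$ of $M$ (this is where it uses $\det W = 1$). You instead extract from $\det W = 1$ the structural fact that $e$ is a unitary divisor of $M/h^2$, define $e'$ prime-by-prime as the matching unitary divisor of $M/h'^2$, and get $e' \mid M/h'^2$ for free, the work being shifted to checking that $e/e'$ is a perfect square whose root divides $h'/h$. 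One can verify that the two constructions yield the same $e'$ (for $p \mid e$ unitarity gives $v_p(g) \ge v_p(h'/h)$, so $(g, h'/h) = t$), but your route isolates the unitarity of the eterminant as the key point, which the paper only exploits implicitly; this makes the divisibility $e' \mid M/h'^2$ transparent rather than the conclusion of a computation. One half-line is worth adding for the $G_0^{*,h}(M)$ case: there $p, q, r, s \in \Z_{(M)}$, so the identity $eps - (M/h^2e)qr = 1$ only shows that a common divisor $d$ of $e$ and $(M/h^2)/e$ is a unit in $\Z_{(M)}$; since $d \mid e \mid M$, a divisor of $M$ that is a unit in $\Z_{(M)}$ equals $1$, so unitarity still holds and the rest of your substitution goes through verbatim.
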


\begin{proof}
	We prove only the first statement since the second statement can be proved by the following argument.
	
	For a matrix 
	\[
	W = \frac{1}{\sqrt{e}} \pmat{ ep & q/h \\ Mr/h & es } 
	\in \Gamma_0^{*, h}(M),
	\]
	put positive integers $ g, e_0 $ such that $ e = g^2 e_0 $ and $ e_0 $ is square-free.
	Let $ g' := (g, h'/h) $ and $ e' := e/g^{\prime 2} $.
	Since
	\[
	W = \frac{1}{\sqrt{e'}} 
	\pmat{ e'g'p & q(h'/gh)/h' \\ Mr(h'/gh)/h' & e'g's },
	\]
	it suffices to show $ e' \mid M/h^{\prime 2} $.
	
	Because $ g^2 e_0 = e \mid M/h^2 = M_0 (f/h)^2 $ and $ M_0 $ is square-free, we have $ g \mid f/h $.
	Thus $ e_0 = e/g^2 \mid M/g^2 h^2 = M_0 (f/gh)^2 $.
	We have $ e_0 \mid M_0 f/gh $ since $ e_0 $ is square-free.
	Here $ \det W = 1 $ implies
	\[
	1 = \left( e, \frac{M}{h^2 e} \right) 
	= \left( g^2 e_0, \frac{M_0 f/gh}{e_0} \frac{f}{gh} \right)
	\]
	and thus $ (e_0, f/gh) = 1 $.
	Therefore we have $ e_0 \mid M_0 $.
	
	Since $ g \mid f/h $, we have
	$ g/g' \mid f/g' h = (f/h')(h'/g' h) $.
	By the definition of $ g' $, we have 
	$ (g/g', h'/g' h) = 1 $ and thus $ g/g' \mid f/h' $.
	
	As a result, we have $ e' = e_0 (g/g')^2 \mid M_0 (f/h')^2 = M/h^{\prime 2} $.
\end{proof}

\begin{lemma} \label{lem:G*_h_replace}
	For a positive integer $ h $ with $ h^2 \mid M $ and a matrix 
	\[
	W = \frac{1}{\sqrt{e}} \pmat{ ep & q/h \\ Mr/h & es } 
	\in \Gamma_0^{*, h}(M),
	\]
	let $ g := (h, p, s)(h, q, r) $ and $ h' := h/g $.
	Then we have $ W \in \Gamma_0^{*, h'}(M) $.
	
	The same result holds for $ G_0^{*, h}(M) $.
\end{lemma}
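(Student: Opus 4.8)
The plan is to keep the underlying real matrix $W$ fixed and simply re-express it as an element of $\Gamma_0^{*, h'}(M)$; that is, to produce a new value $e'$ and integers $p', q', r', s'$ realizing the required shape with $h'$ in place of $h$. Write $g_1 := (h, p, s)$ and $g_2 := (h, q, r)$, so that $g = g_1 g_2$. Since membership in $\Gamma_0^{*, h'}(M)$ presupposes that $h' = h/g$ is a positive integer dividing $f$, the first thing to check is $g \mid h$ (otherwise the assertion would not even typecheck).

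The key is to exploit the determinant. Expanding $\det W = 1$ gives
\[
eps - \frac{Mqr}{eh^2} = 1,
\]
and since $e \mid M/h^2$ the number $k := M/(eh^2)$ is an integer, so $eps - kqr = 1$, whence $(eps, kqr) = 1$. Now $g_1 \mid p$ forces $g_1 \mid eps$ and $g_2 \mid q$ forces $g_2 \mid kqr$, so no prime can divide both $g_1$ and $g_2$, i.e. $(g_1, g_2) = 1$. As $g_1$ and $g_2$ both divide $h$ and are coprime, $g = g_1 g_2 \mid h$, so $h' \in \Z_{>0}$ and $h' \mid h \mid f$, giving $h'^2 \mid M$.

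With this in hand the rewriting is routine. I would set
\[
e' := e g_1^2, \quad p' := p/g_1, \quad s' := s/g_1, \quad q' := q/g_2, \quad r' := r/g_2,
\]
which are integers because $g_1 \mid p, s$ and $g_2 \mid q, r$. Using $\sqrt{e'} = g_1 \sqrt{e}$ together with $h' = h/(g_1 g_2)$, a direct check confirms
\[
\frac{1}{\sqrt{e'}} \pmat{ e'p' & q'/h' \\ Mr'/h' & e's' } = \frac{1}{\sqrt{e}} \pmat{ ep & q/h \\ Mr/h & es } = W,
\]
the factor $g_1$ coming from $\sqrt{e'}$ cancelling against the reduction of the off-diagonal denominator from $h$ to $h' = h/(g_1 g_2)$. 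Finally $e \mid M/h^2$ yields $e' = e g_1^2 \mid M g_1^2/h^2 \mid M(g_1 g_2)^2/h^2 = M/h'^2$, so $W \in \Gamma_0^{*, h'}(M)$. The argument for $G_0^{*, h}(M)$ is identical once $\Z$ is replaced by $\Z_{(M)}$ throughout.

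The only genuinely non-routine step is establishing $(g_1, g_2) = 1$, equivalently $g \mid h$: the decomposition in the third paragraph is forced, but it produces an integer $h'$ only because the two greatest common divisors are coprime, and this coprimality is precisely what the determinant relation $eps - kqr = 1$ delivers. Everything else is bookkeeping.
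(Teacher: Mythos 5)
Your proof is correct and follows essentially the same route as the paper: use $\det W = 1$ to show $(h,p,s)$ and $(h,q,r)$ are coprime (hence $g \mid h$), then rescale $e, p, q, r, s$ and verify $e' \mid M/h'^{2}$. In fact your explicit formulas $p' = p/(h,p,s)$, $s' = s/(h,p,s)$, $q' = q/(h,q,r)$, $r' = r/(h,q,r)$ are the ones that actually make the matrix identity hold; the paper's displayed definitions divide by the squares of these gcds, which appears to be a typo, and your spelled-out determinant argument for the coprimality is a welcome expansion of a step the paper leaves implicit.
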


\begin{proof}
	Since $ \det W = 1 $, $ (h, p, s) $ and $ (h, q, r) $ are coprime and thus $ g \mid h $.
	Let
	\[
	e' := e (h, p, s)^2,
	p' := p/(h, p, s)^2,
	q' := q/(h, q, r)^2,
	r' := r/(h, q, r)^2,
	s' := s/(h, p, s)^2.
	\]
	Then we have
	\[
	W = \frac{1}{\sqrt{e'}} \pmat{ e'p' & q'/h' \\ Mr'/h' & e's' }
	\]
	and $ e' \mid Mg^2/h^2 = M/h^{\prime 2} $.
	Thus we have $ W \in \Gamma_0^{*, h'}(M) $.
	
	The statement for $ G_0^{*, h}(M) $ is proved by the same argument.
\end{proof}

Thirdly, we determine the normalizers.

The first statement in the following proposition is proved in \cite[Lemma 2.1, Proposition 2.3]{Zemel}.
We give other proof.

\begin{proposition} \label{prop:G*f}
	We have
	\begin{align*}
	\left\{ W \in \SL_2(\R) \relmiddle| 
	W^{-1} \Gamma_1(M) W \subset \Gamma_0(M) \right\}
	&= \Gamma_0^{*, f}(M), \\
	\left\{ W \in \SL_2(\R) \relmiddle| 
	W^{-1} \Gamma_1(M) W \subset G_0(M) \right\}
	&= G_0^{*, f}(M).
	\end{align*}
\end{proposition}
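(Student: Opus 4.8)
The plan is to prove each of the two equalities by double inclusion, treating $\Gamma_0(M)$ and $G_0(M)$ in parallel. Write $R := \Z$ in the first case and $R := \Z_{(M)}$ in the second; in both cases the target group is exactly the set of matrices with all entries in $R$ whose lower-left entry lies in $MR$. I abbreviate $W = \pmat{\alpha & \beta \\ \gamma & \delta} \in \SL_2(\R)$, so that $W^{-1} = \pmat{\delta & -\beta \\ -\gamma & \alpha}$, and recall that $M = f^2 M_0$ with $M_0$ square-free.

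For the inclusion $\Gamma_0^{*,f}(M) \subseteq \{W : W^{-1}\Gamma_1(M)W \subseteq \Gamma_0(M)\}$ I would substitute a matrix of the prescribed shape $W = \tfrac{1}{\sqrt e}\pmat{ep & q/f \\ Mr/f & es}$, with $e \mid M_0$ and $p,q,r,s \in \Z$, directly into $W^{-1}\gamma_0 W$ for a general $\gamma_0 = \pmat{1+Ma & b \\ Mc & 1+Md} \in \Gamma_1(M)$. The lower-left entry of $W^{-1}\gamma_0 W$ equals $M(d-a)\alpha\gamma - b\gamma^2 + Mc\alpha^2$; since $\alpha\gamma = M_0 fpr$, $\alpha^2 = ep^2$ and $\gamma^2 = M(M_0/e)r^2$ all lie in $R$ and $e \mid M_0$, this entry lies in $MR$, while a parallel bookkeeping of the products $\gamma\delta = M_0 frs$, $\delta^2 = es^2$, $M\alpha\beta$, $M\beta\delta$, $M\beta^2$ shows the remaining entries lie in $R$. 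The $G_0$-version is verbatim with $\Z$ replaced by $\Z_{(M)}$.

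The substance is the reverse inclusion, where I need only necessary conditions, not a generating set of $\Gamma_1(M)$. I would feed into the hypothesis the two unipotents $\pmat{1 & 1 \\ 0 & 1}$ and $\pmat{1 & 0 \\ M & 1}$ together with the mixed elements $\pmat{1 & 1 \\ M & 1+M}$ and $\pmat{1+M & 1 \\ M & 1}$, all of which lie in $\Gamma_1(M)$, and read off membership of their conjugates in $\Gamma_0(M)$. A direct computation yields
\[
\alpha^2,\ \delta^2,\ \alpha\gamma,\ \gamma\delta \in R, \quad \gamma^2 \in MR, \quad M\alpha\beta,\ M\beta\delta,\ M\beta^2,\ M\alpha\delta \in R.
\]
Assuming $\alpha \ne 0$ (the case $\alpha = 0$, corresponding to the involution $W_0$, being symmetric under exchanging the two columns), let $e$ be the square-free part of $\alpha^2 \in R$, so that $\alpha \in \sqrt e\,\Q^{\times}$. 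The relations $\alpha\gamma \in R$ and $\gamma\delta \in R$ then force $\gamma$ and $\delta$ into $\sqrt e\,\Q$, and $\det W = \alpha\delta - \beta\gamma = 1$ puts $\beta$ there too; hence $W \in \sqrt e \cdot M_2(\Q)$. Setting $W'' := \sqrt e\,W \in M_2(\Q)$, which has $\det W'' = e$, the remaining product conditions read coordinate-wise — together with $M_0$ square-free — identify $W''$ with $\pmat{ep & q/f \\ Mr/f & es}$ for some $e \mid M_0$ and $p,q,r,s \in R$, which is precisely the assertion $W \in \Gamma_0^{*,f}(M)$ (resp. $G_0^{*,f}(M)$).

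The hard part will be this last reconstruction: extracting a single square-free modulus $e \mid M_0$ that simultaneously governs all four entries, and correctly distributing the factor $f$ so that the diagonal entries carry $\sqrt e$ while the off-diagonal entries carry $1/(f\sqrt e)$ and $M/(f\sqrt e)$. The essential leverage is that $M_0$ is square-free: this is exactly what upgrades the weak condition $M\beta^2 \in R$ to $f\beta \in \tfrac{1}{\sqrt e}R$, and what forces the modulus $e$ obtained from $\alpha^2$ to divide $M_0$ rather than merely $M$. Tracking these square-free divisibilities consistently across the products of entries, and checking the degenerate cases $\alpha = 0$ and $\gamma = 0$ separately, is where the care is needed; the $G_0$ statement then follows by running the identical argument over the localization $\Z_{(M)}$ in place of $\Z$.
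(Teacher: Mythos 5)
Your skeleton coincides with the paper's: the easy inclusion is verified by substituting the prescribed shape into $W^{-1}\gamma W$ (the paper's display \eqref{eq:conjugate}), and the hard inclusion is attacked by conjugating the unipotents $\smat{1&1\\0&1}$, $\smat{1&0\\M&1}$ and a mixed element of $\Gamma_1(M)$, exactly as the paper does; the entry conditions you extract ($\alpha^2,\delta^2,\alpha\gamma,\gamma\delta\in R$, $\gamma^2\in MR$, $M\beta^2,M\alpha\beta\in R$, \dots) match the paper's $p^2, Mq^2, r^2/M, s^2, Mpq, pr, rs\in\Z$, and the field-of-definition step (all entries lie in $\sqrt{e}\,\Q$ for $e$ the square-free part of $\alpha^2$) is fine modulo the degenerate cases you flag.

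The gap is in the one step you defer. After writing $W=\frac{1}{\sqrt{e}}\smat{ep & q/f\\ Mr/f& es}$ with $p,q,r,s\in\Z$, the coordinate-wise conditions give only $\frac{M_0}{e}q^2,\frac{M_0}{e}r^2\in\Z$, i.e.\ $e\mid M_0(q,r)^2$, hence (as $e$ is square-free) $e\mid M_0(q,r)$ --- not $e\mid M_0$. Square-freeness of $M_0$ alone cannot exclude, say, $e=2$, $M_0=1$, $q\equiv r\equiv 0\bmod 2$, which satisfies every product condition. What kills the common factor $(e,q,r)$ is the determinant identity $e=e^2ps-M_0qr$: a prime dividing $e$, $q$ and $r$ would divide the right-hand side to order at least $2$, contradicting that $e$ is square-free. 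This is exactly the gcd computation occupying the second half of the paper's argument, and your proposal replaces it with ``read the product conditions coordinate-wise,'' which is not sufficient. Separately, your claim that the $G_0$ case is verbatim ``over the localization'' conceals a real issue (shared with the paper's one-line remark): the implication ``$es^2\in R$ and $e$ square-free imply $s\in R$'' rests on the fact that a rational number with integral square is an integer, and this fails over $R=\Z_{(M)}$ (for instance $\delta=1/\sqrt{3}$ has $\delta^2=1/3\in\Z_{(4)}$, and $\mathrm{diag}(\sqrt{3},1/\sqrt{3})$ passes all your tests for $M=4$), so the second equality requires a genuinely separate treatment of the units of $\Z_{(M)}$ rather than a word-for-word transfer.
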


\begin{proof}
	We prove only the first statement.
	The second statement can be proved similarly by replacing $ \Z $ with $ \Z_{(M)} $ in the following argument.
	
	Let
	\[
	W = \pmat{ p & q \\ r & s }
	\]
	be an element of the left-hand side.
	Then
	\begin{align*}
	W^{-1} \pmat{ 1 & 1 \\ 0 & 1 } W &=
	\pmat{ 1+rs & s^2 \\ -r^2 & 1-rs }, \\
	W^{-1} \pmat{ 1 & 0 \\ M & 1 } W &=
	\pmat{ 1 - Mpq & -Mq^2 \\ Mp^2 & 1 + Mpq }, \\
	W^{-1} \pmat{ 1 & 1 \\ M & 1+M } W &=
	\pmat{ 1 + rs - Mq(p+r) & s^2 - Mq(q+s) \\ -r^2 + Mp(p+r) & 1 - rs + Mp(q+s) }
	\end{align*}
	are in $ \Gamma_0(M) $.
	Hence $ p^2, Mq^2, r^2/M, s^2, Mpq, pr, rs \in \Z $.
	Therefore we can rewrite
	\[
	W = \frac{1}{\sqrt{e}} \pmat{ ep & q/f \\ Mr/f & es } 
	\]
	with a positive square-free integer $ e $ and rational numbers $ p, q, r, s $ which satisfy
	\[
	ep^2, \frac{M}{f^2 e} q^2, \frac{M}{f^2 e} r^2, es^2 \in \Z.
	\]
	Since $ e $ and $ M_0 = M/f^2 $ are square-free, $ p, q, r, s $ are integers.
	Also we have $ Mqr / f^2 e = 1 - eps \in \Z $.
	Thus $ e \mid M(q^2, qr, r^2)/f^2 = M_0 (q, r)^2 $.
	Since $ M_0 $ is square-free, we have $ e \mid M_0 (q, r) $.
	
	Let $ g := (e, q, r) $.
	We will show $ g = 1 $.
	Since $ \det W = 1 $, we have 
	$ e = (e^2, M(q, r)/f^2) = (e^2, M_0 g (q, r)/g) $.
	Here $ (q, r)/g $ is coprime to $ e $ and thus 
	$  e = (e^2, M_0 g ) = g (g(e/g)^2, M_0) $.
	Since $ M_0 $ is square-free, we have $ e = g (e, M_0) $.
	Therefore $ e/g \mid M_0 $ and $ (g, M_0 / (e/g)) = 1 $.
	Hence $ g M_0 / (e/g) = g^2 M_0 / e $ is square-free.
	Since $ e $ is square-free, we have $ g=1 $.
	
	By combining the above discussion, we have $ e \mid M_0 $ and thus $ W \in \Gamma_0^{*, f}(M) $.
	
	Conversely, suppose 
	\[
	W = \frac{1}{\sqrt{e}} \pmat{ ep & q/f \\ Mr/f & es } 
	\in \Gamma_0^{*, f}(M).
	\]
	For any matrix
	$ \gamma = \bigl( \begin{smallmatrix} a & b \\ Mc & d \end{smallmatrix} \bigr) \in \Gamma_1(M) $,
	we have
	\begin{equation} \label{eq:conjugate}
	\begin{split}
	W^{-1} \gamma W &=
	\begin{pmatrix}	A &	B \\ MC & D \end{pmatrix}, \\
	A &:= eaps + \frac{M}{f} \left( brs - cpq \right) - \frac{M}{f^2 e}dqr , \\
	B &:= -\frac{(a-d)qs}{f} - \frac{M}{f^2 e} cq^2 + ebp^2, \\
	C &:= -\frac{(a-d)pr}{M} - \frac{M}{f^2 e} br^2 + ecs^2, \\
	D &:= -\frac{M}{f^2 e} aqr - \frac{M}{f} \left( brs - cpq  \right) + edps.	
	\end{split}
	\end{equation}
	Thus we obtain $ W^{-1} \gamma W \in \Gamma_0(M) $.
\end{proof}

To determine the normalizers of $ \Gamma_0(M) $ and $ G_0(M) $, we prepare a lemma from elementary number theory.

\begin{lemma} \label{lem:e(M)}
	For a positive integer $ M $, let $ \veps(M) $ be the greatest common divisor of $ a-d $ for all integers $ a $ and $ d $ such that $ \smat{a & * \\ * & d} \in \Gamma_0(M) $.
	Similarly, let $ e(M) $ be the greatest common divisor of $ a-d $ for all integers $ a $ and $ d $ such that $ \smat{a & * \\ * & d} \in G_0(M) \cap M_2(\Z) $.
	Then we have
	\[
	\veps(M) = (M, 24), \quad e(M) = (M, 2).
	\]
	Thus we have $ \Gamma_0(M) = \Gamma_0^{(M')}(M) $ where  $ M' := (M, 24) $. 
\end{lemma}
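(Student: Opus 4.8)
The plan is, for each of the two groups, to determine exactly which pairs $(a,d)$ of diagonal entries actually occur among its elements, and then to reduce the computation of the greatest common divisor of the differences $a-d$ to a short group-theoretic statement about the exponent of unit groups. Both $\veps(M)$ and $e(M)$ will come out of the same realizability-plus-gcd scheme.

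\textbf{Computation of $\veps(M)$.} First I would record the realizability step: for $\gamma=\smat{a & b \\ Mc & d}\in\Gamma_0(M)$ the relation $ad-Mbc=1$ forces $(a,M)=1$ and $d\equiv a^{-1}\pmod M$, and conversely, given any integer $a$ with $(a,M)=1$ and any integer $d\equiv a^{-1}\pmod M$, the matrix $\smat{a & 1 \\ Mc & d}$ with $c:=(ad-1)/M\in\Z$ lies in $\Gamma_0(M)$. Hence the diagonal pairs occurring are precisely all $(a,d)$ with $(a,M)=1$ and $d\equiv a^{-1}\pmod M$. Since $d$ may be shifted by arbitrary multiples of $M$ without leaving this set, the difference $a-d$ ranges over all integers congruent to $a-a^{-1}$ modulo $M$, as $a$ runs over the residues coprime to $M$. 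Therefore
\[
\veps(M)=\left(M,\ \{a-a^{-1}:(a,M)=1\}\right)=\left(M,\ \{a^2-1:(a,M)=1\}\right),
\]
the last equality because $(a,M)=1$ lets us multiply $a-a^{-1}$ by the unit $a$ modulo $M$. This common divisor is the largest divisor $g$ of $M$ with $a^2\equiv1\pmod g$ for every unit $a$, i.e.\ the largest $g\mid M$ for which $(\Z/g\Z)^{\times}$ has exponent dividing $2$. By the structure of unit groups the prime powers $\ell^k$ with this property are exactly $\ell^k\in\{2,4,8,3\}$, so the largest such $g$ dividing $M$ is $(M,24)$. This single computation simultaneously yields $(M,24)\mid a-d$ for every $\gamma$ and the existence of elements forcing the gcd down to $(M,24)$.

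\textbf{Computation of $e(M)$.} For $G_0(M)\cap M_2(\Z)$ the determinant need only be coprime to $M$ rather than equal to $1$, so the constraint on the diagonal relaxes: every diagonal matrix $\smat{a & 0 \\ 0 & d}$ with $(ad,M)=1$ already lies in the group, and here $a$ and $d$ range \emph{independently} over all integers coprime to $M$. Thus $e(M)=(\{a-d:(a,M)=(d,M)=1\})$. If $M$ is odd, taking $a=2,d=1$ gives $a-d=1$, so $e(M)=1=(M,2)$. If $M$ is even, every integer coprime to $M$ is odd, so $2\mid a-d$ always; and a short Chinese Remainder Theorem argument produces an odd $d$ coprime to $M$ with $d+2$ also coprime to $M$ (for each odd prime $p\mid M$ one avoids the residues $0$ and $-2$, which is possible since $p\ge3$), giving $a-d=2$. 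Hence $e(M)=2=(M,2)$ in the even case as well.

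\textbf{Conclusion and main obstacle.} Finally, since $\veps(M)=(M,24)=M'$, the computation above shows every $\gamma\in\Gamma_0(M)$ satisfies $a\equiv d\pmod{M'}$, giving $\Gamma_0(M)\subseteq\Gamma_0^{(M')}(M)$; the reverse inclusion is immediate from the definition of $\Gamma_0^{(M')}(M)$, so the two groups coincide. The only genuinely non-formal step is the number-theoretic identification of the largest divisor of $M$ over which every unit is its own inverse as $(M,24)$; this rests on the classical fact that $(\Z/g\Z)^{\times}$ has exponent at most $2$ precisely when $g\mid 24$, pinning the admissible $2$-part at $8$ and the $3$-part at $3$. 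Everything else is the bookkeeping of the realizability step, which I would keep brief.
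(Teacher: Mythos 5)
Your proof is correct, and it reaches the same first reduction as the paper — namely that, because the diagonal entries realizable in $\Gamma_0(M)$ are exactly the pairs $(a,d)$ with $(a,M)=1$ and $d\equiv a^{-1}\bmod M$ (the paper phrases this as surjectivity of $\Gamma_0(M)\to(\Z/M\Z)^\times$), one has $\veps(M)=\bigl(M,\{a-a^{-1}\}\bigr)$ and $e(M)=\bigl(\{a-d: a,d\in\Z\cap\Z_{(M)}^\times\}\bigr)$ — but you finish differently. The paper reduces to prime powers (using $\veps(M)\mid M$) and then enumerates inverse pairs explicitly modulo $16$ and modulo $9$, plus the $a\equiv 2^{-1}$ trick for $p\ge 5$; you instead multiply $a-a^{-1}$ by the unit $a$ to get $\veps(M)=\bigl(M,\{a^2-1:(a,M)=1\}\bigr)$ and invoke the classical characterization of the moduli $g$ with $(\Z/g\Z)^\times$ of exponent dividing $2$ as the divisors of $24$. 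Your route is more uniform and avoids the casework; the paper's is more self-contained at the cost of the enumeration. One step you should make explicit: passing from "the largest $g\mid M$ dividing $a^2-1$ for all $a$ coprime to $M$" to "the largest $g\mid M$ with $(\Z/g\Z)^\times$ of exponent $\le 2$" requires lifting units mod $g$ to units mod $M$ (or, equivalently, a prime-by-prime choice of $a$, e.g.\ $a\equiv 2$ or $a\equiv 3$ locally), since a priori the first condition only quantifies over units mod $M$. This is a one-line CRT argument, but without it the identification is not purely formal. Your treatment of $e(M)$ via diagonal matrices and a CRT choice of $d$ with $d$ and $d+2$ both coprime to $M$ is clean and matches the paper's claim that "the same argument works", and you also supply the deduction $\Gamma_0(M)=\Gamma_0^{(M')}(M)$, which the paper leaves implicit.
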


\begin{proof}
	Since the map
	\[
	\Gamma_0(M) \to (\Z/M\Z)^\times, 
	\pmat{a & * \\ * & *} \mapsto a \bmod M 
	\]
	is surjective, we have
	\[
	\veps(M) = 
	( a-d \mid ad \equiv 1 \bmod M ), \quad  
	e(M) = 
	( a-d \mid a, d \in \Z \cap \Z_{(M)}^\times ).
	\]
	It suffices to show when $ M $ is a power of a prime number since
	$ \veps(M), e(M) $ divides $ M $. 
	
	If $ M=16 $, then a pair $ (\overline{a}, \overline{d}) \in ((\Z/16\Z)^\times)^2 $ such that $ ad \equiv 1 \bmod 16 $ is
	$ \pm (\overline{1}, \overline{1}) $, $ \pm (\overline{3}, \overline{11}) $ or $ \pm (\overline{7}, \overline{7}) $.
	This implies that $ \veps(16) = (16, 1-1, 11-3, 7-7) = 8 $. 
	Thus if $ M $ is a power of 2, then $ \veps(M) = (M, 24) $. 
	
	If $ M=9 $, then then a pair $ (\overline{a}, \overline{d}) \in ((\Z/9\Z)^\times)^2 $ such that $ ad \equiv 1 \bmod 9 $ is
	$ \pm (\overline{1}, \overline{1}) $ or $ \pm (\overline{2}, \overline{5}) $ and we have $ \veps(9) = 3 $. 
	Thus if $ M $ is a power of 3, then $ \veps(M) = (M, 24) $. 
	
	Suppose $ M $ is a power of a prime number $ p \ge 5 $. 
	For an integer $ a $ such that $ 2a \equiv 1 \bmod p $,
	since $ 4 \not\equiv 1 \bmod p $ we have $ a \not\equiv 2 \bmod p $. 
	Therefore, we find $ \veps(p) \mid (p, a-2) = 1 $ and $ \veps(M) = (M, 24) $. 
	
	The same argument works for $ e(M) $. 
\end{proof}

For a group $ G $ and its subgroup $ H $, we denote by $ N_G(H) $ the normalizer of $ H $ in $ G $.

By Proposition \ref{prop:G*f},  (\ref{eq:conjugate}) and Lemma \ref{lem:e(M)}, we have the following proposition whose the first statement is stated without a proof in \cite{AS}, \cite[Theorem 1]{Bars}, \cite[Section 3]{CN} and a proof is found in \cite[Corollary 3.2]{Zemel}.

%Proposition \ref{prop:gen_A-L_normalize_G} follows from Lemma \ref{lem:gen_A-L_in_G*}, Lemma \ref{lem:G*_h_include}, and the following proposition.

\begin{proposition} \label{prop:normalizer_of_G_0}
	We have the followings. 
	\begin{enumerate}
		\item \label{item:prop:normalizer_of_G_0_1}
		$ N_{\SL_2(\R)}(\Gamma_0(M)) = \Gamma_0^{*, (f, 24)}(M) $.
		\item \label{item:prop:normalizer_of_G_0_2}
		$ N_{\SL_2(\R)}(G_0(M)) = \Gamma_0^{*, (f, 2)}(M) $.
		\item \label{item:prop:normalizer_of_G_0_3}
		For a positive divisor $ M' $ of $ M $, we have
		$ N_{\SL_2(\R)}(\Gamma_0^{(M')}(M)) = \Gamma_0^{*, (f, M', 2M/M')}(M) $.	
	\end{enumerate}
\end{proposition}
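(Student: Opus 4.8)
The plan is to prove all three statements of Proposition~\ref{prop:normalizer_of_G_0} uniformly from Proposition~\ref{prop:G*f}, since the three groups $\Gamma_0(M)$, $G_0(M)$, and $\Gamma_0^{(M')}(M)$ all contain $\Gamma_1(M)$ and all lie between $\Gamma_1(M)$ and its overgroups. The key structural observation is that conjugation by $W = \frac{1}{\sqrt e}\smat{ep & q/h \\ Mr/h & es} \in \Gamma_0^{*,h}(M)$ sends the given group into itself precisely when the ``shearing'' introduced by $W$ is compensated for. Concretely, for $H \in \{\Gamma_0(M), G_0(M), \Gamma_0^{(M')}(M)\}$, I would establish the two inclusions $N_{\SL_2(\R)}(H) \subseteq \Gamma_0^{*,h_0}(H)$ and $\Gamma_0^{*,h_0}(M) \subseteq N_{\SL_2(\R)}(H)$ separately, where $h_0$ is the prescribed value $(f,24)$, $(f,2)$, or $(f,M',2M/M')$.

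First I would handle the upper bound on the normalizer. Since $H \supseteq \Gamma_1(M)$, any $W$ normalizing $H$ satisfies $W^{-1}\Gamma_1(M)W \subseteq H \subseteq G_0(M)$ (note $\Gamma_0(M), \Gamma_0^{(M')}(M) \subseteq G_0(M)$), so by the second part of Proposition~\ref{prop:G*f} we already get $W \in G_0^{*,f}(M)$; combined with the first part when $H \subseteq \Gamma_0(M)$, we get $W \in \Gamma_0^{*,f}(M)$. This places $W$ in the level-$f$ group and reduces everything to an arithmetic refinement: starting from $W = \frac{1}{\sqrt e}\smat{ep & q/f \\ Mr/f & es} \in \Gamma_0^{*,f}(M)$, I would compute $W^{-1}\gamma W$ using the explicit formula \eqref{eq:conjugate} and demand that the diagonal entries $A, D$ of the conjugate satisfy the defining congruence of $H$ for every $\gamma \in H$. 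The condition $W^{-1}\gamma W \in H$ forces congruences on $p,q,r,s$ modulo the relevant divisor, and invoking Lemma~\ref{lem:e(M)} (which identifies the gcd of $a-d$ over diagonal entries as $(M,24)$ for $\Gamma_0(M)$ and $(M,2)$ for $G_0(M)$) pins down exactly how far the level $f$ can be reduced. Then Lemma~\ref{lem:G*_h_replace} lets me descend from $\Gamma_0^{*,f}(M)$ to $\Gamma_0^{*,h_0}(M)$ for the correct $h_0$.

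Conversely, for the lower bound I would take $W \in \Gamma_0^{*,h_0}(M)$ and verify directly via \eqref{eq:conjugate} that $W^{-1}\gamma W \in H$ for all $\gamma \in H$. Here the divisibility $h_0 \mid f$ and the structure of $h_0$ guarantee that the off-diagonal denominators $f$ appearing in \eqref{eq:conjugate} are absorbed: the terms $\frac{(a-d)qs}{f}$ and $\frac{(a-d)pr}{M}$ become integral precisely because $a-d$ is divisible by $\veps(M) = (M,24)$ (resp. the appropriate modulus for $\Gamma_0^{(M')}(M)$), matching the factor $h_0$. For statement~\ref{item:prop:normalizer_of_G_0_3} the modulus governing the diagonal congruence $a \equiv d \bmod M'$ of $\Gamma_0^{(M')}(M)$ enters through both $M'$ and the complementary factor $2M/M'$, which explains the gcd $(f, M', 2M/M')$; I would verify that both the $\Gamma_0(M)$-condition and the diagonal congruence modulo $M'$ are preserved under conjugation, yielding the stated $h_0$.

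The hard part will be statement~\ref{item:prop:normalizer_of_G_0_3}, tracking the simultaneous constraints from the $\Gamma_0(M)$-membership and the extra diagonal congruence $a \equiv d \bmod M'$. The appearance of $2M/M'$ in the gcd is subtle: it reflects that conjugation by $W_m$ can swap the roles modulo $M'$ versus modulo $M/M'$, so preserving the congruence $a \equiv d \bmod M'$ imposes a condition that mixes $M'$ with its complement. I would isolate this by analyzing the diagonal entries $A$ and $D$ of $W^{-1}\gamma W$ in \eqref{eq:conjugate} and computing $A - D$ modulo $M'$, checking that the requirement $A \equiv D \bmod M'$ (for all $\gamma \in \Gamma_0^{(M')}(M)$) is equivalent to the divisibility condition defining $\Gamma_0^{*,(f,M',2M/M')}(M)$; reducing prime-by-prime as in Lemma~\ref{lem:e(M)} should make the factor $2M/M'$ transparent and complete the proof.
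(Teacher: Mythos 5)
Your plan is correct and follows essentially the same route as the paper's proof: reduce to $\Gamma_0^{*,f}(M)$ via Proposition \ref{prop:G*f}, normalize the representative with Lemma \ref{lem:G*_h_replace}, read off the required divisibility from the explicit conjugation formula \eqref{eq:conjugate}, and pin down the exact level using Lemma \ref{lem:e(M)} (the paper additionally uses $\det W = 1$ to get $(pr,qs)=(p,s)(q,r)$, which your computation would also need). Your uniform handling of the upper bound via $\Gamma_1(M)\subseteq H\subseteq G_0(M)$ and your analysis of $A-D \bmod M'$ for part (iii) match the paper's argument in substance.
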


\begin{proof}
	\ref{item:prop:normalizer_of_G_0_1}.	
	Since both sides are in $ \Gamma_0^{*, f}(M) $ by Lemma \ref{lem:G*_h_include} and Proposition \ref{prop:G*f}, it is enough to show that for $ W \in \Gamma_0^{*, f}(M) $, $ W $ normalizes $ \Gamma_0(M) $ if and only if $ W \in \Gamma_0^{*, (f, 24)}(M) $.
	By Lemma \ref{lem:G*_h_replace}, we can assume
	\[
	W = \frac{1}{\sqrt{e}} \pmat{ ep & q/h \\ Mr/h & es } 
	\in \Gamma_0^{*, h}(M), \quad
	h \mid f, \quad
	(h, p, s) = (h, q, r) = 1.
	\]
	In this case, $ W $ normalizes $ \Gamma_0(M) $ if and only if 
	$ h \mid (a-d)(pr, qs) $ for any $ a, d \in \Z $ with $ ad \equiv 1 \bmod M $ by  (\ref{eq:conjugate}).
	By Lemma \ref{lem:e(M)}, this is equivalent to $ h \mid (M, 24)(pr, qs) $.
	
	Since $ \det W = 1 $, we have $ (p, q) = (p, r) = (q, s) = 1 $.
	Thus
	\begin{align*}
	(p, s) &= (p, q)(p, s) =((p, q, s)p, qs) = (p, qs), \\
	(q, r) &= (q, r)(s, r) =((q, r, s)r, qs) = (r, qs), \\
	(p, qs)(r, qs) &= (pr, (p, r, qs)qs) = (pr, qs).
	\end{align*}
	and we obtain $ (pr, qs) = (p, s)(q, r) $.
	Since $ (h, p, s) = (h, q, r) = 1 $, $ W $ normalizes $ \Gamma_0(M) $ if and only if $ h \mid (M, 24) $.
	Since $ f \mid h $, this is equivalent to $ h \mid (f, 24) $, that is, $ W \in \Gamma_0^{*, (f, 24)}(M) $ by Lemma \ref{lem:G*_h_include}.
	
	\ref{item:prop:normalizer_of_G_0_2} can be proved by the same argument in 	\ref{item:prop:normalizer_of_G_0_1}.
	
	\ref{item:prop:normalizer_of_G_0_3}.
	Let
	\[
	W = \frac{1}{\sqrt{e}} \pmat{ ep & q/h \\ Mr/h & es } 
	\in \Gamma_0^{*, h}(M)
	\]
	with $ h \mid f, (h, p, s) = (h, q, r) = 1 $.
	For a matrix
	$ \gamma = \bigl( \begin{smallmatrix} a & b \\ Mc & d \end{smallmatrix} \bigr) \in \Gamma_0^{(M')}(M) $,
	let
	\[
	W^{-1} \gamma W =
	\begin{pmatrix}	A &	B \\ MC & D \end{pmatrix}.
	\]
	
	By  (\ref{eq:conjugate}), $ B, C \in \Z $ for any $ \gamma \in \Gamma_0^{(M')}(M) $ if and only if
	$ h \mid M'(pr, qs) = M'(p, s)(q, r) $.
	Since $ (h, p, s) = (h, q, r) = 1 $ and $ f \mid h $, this is equivalent to $ h \mid (f, M') $.
	
	By  (\ref{eq:conjugate}), $ A \equiv D \bmod M' $ for any $ \gamma \in \Gamma_0^{(M')}(M) $ if and only if
	\[
	0 \equiv A - D \equiv 2 \frac{M}{h} (brs - cpq) \bmod M'
	\]
	for any $ \gamma \in \Gamma_0^{(M')}(M) $, which is equivalent to 
	$ h \mid (pr, qs)2M/M' = (p, s)(q, r)2M/M' $.
	This is equivalent to $ h \mid (f, 2M/M') $.
\end{proof}

Finally, we give conditions whether generalized Atkin-Lehner involutions normalize $ \Gamma_0(M) $, $ G_0(M) $, or $ \Gamma_0^{(M')}(M) $.

\begin{proposition} \label{prop:gen_A-L_normalize_G}
	For a positive integer $ m $ and a generalized Atkin-Lehner involution $ W_m^M $, the followings hold.
	\begin{enumerate}
		\item \label{item:prop:gen_A-L_normalize_G1}
		$ W_m^M \in N_{\SL_2(\R)}(\Gamma_0(M)) $ if and only if 
		$ (f, m) \mid (f, 24) $.
		\item \label{item:prop:gen_A-L_normalize_G2}
		$ W_m^M \in N_{\SL_2(\R)}(G_0(M)) $ if and only if 
		$ (f, m) \mid (f, 2) $.
		\item \label{item:prop:gen_A-L_normalize_G3}
		For $ M' := M/(f, m) $, we have 
		$ W_m^M \in N_{\SL_2(\R)}(\Gamma_0^{(M')}(M)) $.
	\end{enumerate}
\end{proposition}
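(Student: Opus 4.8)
The plan is to read all three statements off three facts that are already in place: Lemma \ref{lem:gen_A-L_in_G*}, which puts $W_m^M$ inside $\Gamma_0^{*,(f,m)}(M)$; the explicit normalizer computations of Proposition \ref{prop:normalizer_of_G_0}; and the monotonicity $\Gamma_0^{*,h}(M)\subseteq\Gamma_0^{*,h'}(M)$ for $h\mid h'$ of Lemma \ref{lem:G*_h_include}. The recurring principle is that, because $W_m^M$ already lies in $\Gamma_0^{*,(f,m)}(M)$, its membership in a normalizer of the shape $\Gamma_0^{*,h}(M)$ is governed by divisibilities involving $(f,m)$.

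I would dispose of part \ref{item:prop:gen_A-L_normalize_G3} first, since it is a pure divisibility check needing no analysis of the individual entries. With $M'=M/(f,m)$, Proposition \ref{prop:normalizer_of_G_0}\ref{item:prop:normalizer_of_G_0_3} identifies $N_{\SL_2(\R)}(\Gamma_0^{(M')}(M))$ with $\Gamma_0^{*,(f,M',2M/M')}(M)$, so by Lemmas \ref{lem:gen_A-L_in_G*} and \ref{lem:G*_h_include} it suffices to verify $(f,m)\mid(f,M',2M/M')$. Writing $d:=(f,m)$, one has $d\mid f$ at once; $d\mid M'=M/d$ because $d\mid f$ and $f^2\mid M$ force $d^2\mid M$; and $d\mid 2M/M'=2d$ trivially. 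Hence $d$ divides the threefold greatest common divisor. The ``if'' halves of \ref{item:prop:gen_A-L_normalize_G1} and \ref{item:prop:gen_A-L_normalize_G2} are equally immediate: if $(f,m)\mid(f,24)$ then $\Gamma_0^{*,(f,m)}(M)\subseteq\Gamma_0^{*,(f,24)}(M)=N_{\SL_2(\R)}(\Gamma_0(M))$ by Lemma \ref{lem:G*_h_include} and Proposition \ref{prop:normalizer_of_G_0}\ref{item:prop:normalizer_of_G_0_1}, whence $W_m^M$ normalizes $\Gamma_0(M)$; the statement for $G_0(M)$ is identical with $(f,24)$ replaced by $(f,2)$ and Proposition \ref{prop:normalizer_of_G_0}\ref{item:prop:normalizer_of_G_0_2} in place of \ref{item:prop:normalizer_of_G_0_1}.

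The ``only if'' halves are the substantive point, because $W_m^M$ could a priori lie in the smaller group $\Gamma_0^{*,(f,24)}(M)$ even when $(f,m)\nmid(f,24)$, so mere non-nesting of the groups does not suffice. To handle this I would feed the explicit canonical form $W_m^M=\frac{1}{\sqrt e}\pmat{m' & u/(f,m) \\ M/(f,m) & (M/(f,m),m')v}$ from the proof of Lemma \ref{lem:gen_A-L_in_G*}, with $m'=m/(f,m)$ and $e=(M_0,m')$, directly into the conjugation identity \eqref{eq:conjugate} (taken with $h=(f,m)$ in place of $f$). Exactly as in the proof of Proposition \ref{prop:normalizer_of_G_0}\ref{item:prop:normalizer_of_G_0_1}, together with $\veps(M)=(M,24)$ from Lemma \ref{lem:e(M)}, this yields the sharp criterion that $W_m^M$ normalizes $\Gamma_0(M)$ if and only if $(f,m)\mid(M,24)(pr,qs)$, where $p=m'/e$, $q=u$, $r=1$, $s=(M/(f,m),m')v/e$, and the analogous criterion with $(M,24)$ replaced by $(M,2)$ for $G_0(M)$. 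The task is then to show this criterion collapses to $(f,m)\mid(f,24)$ (resp. $(f,m)\mid(f,2)$).

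I expect this collapse to be the main obstacle. Concretely, one must show that $(f,m)$ shares no factor with $(pr,qs)=(p,qs)$ beyond what is forced, so that $(f,m)\mid(M,24)(p,qs)$ together with $(f,m)\mid f$ gives $(f,m)\mid(f,(M,24))=(f,24)$. I would argue prime by prime: for a prime $\ell\mid(f,m)$ with $v_\ell(f)\ge v_\ell(m)$ one has $v_\ell(m')=0$, hence $\ell\nmid p$ and this prime is harmless; for $v_\ell(f)<v_\ell(m)$ one must instead extract the required coprimality from the square-freeness of $M_0=M/f^2$ and the defining relation $(M,m)mv-Mu=(M,m^2)$, which pins down the $\ell$-adic valuations of $e$, of $s$, and of $v$. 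Carrying out this valuation bookkeeping carefully, so that a prime power of $(f,m)$ not dividing $(f,24)$ genuinely survives into the right-hand side and forces non-normalization, is the delicate step; the earlier remark that $W_m^{25}$ with $5\mid m$ fails to normalize $\Gamma_0(25)$ and $G_0(25)$ is the prototypical instance to keep in mind while completing it.
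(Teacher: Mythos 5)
Your part \ref{item:prop:gen_A-L_normalize_G3} and the ``if'' halves of \ref{item:prop:gen_A-L_normalize_G1} and \ref{item:prop:gen_A-L_normalize_G2} are exactly the paper's argument: Lemma \ref{lem:gen_A-L_in_G*} puts $W_m^M$ in $\Gamma_0^{*,(f,m)}(M)$, the divisibilities $(f,m)\mid f$, $(f,m)\mid M/(f,m)$, $(f,m)\mid 2(f,m)$ (resp.\ $(f,m)\mid(f,24)$, $(f,m)\mid(f,2)$) let Lemma \ref{lem:G*_h_include} push it into the normalizers of Proposition \ref{prop:normalizer_of_G_0}. You are also right that this is all the cited results actually deliver, and that the ``only if'' halves need more, since membership in $\Gamma_0^{*,(f,m)}(M)$ does not exclude membership in a smaller $\Gamma_0^{*,h}(M)$; the paper's one-line proof glosses over this.

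The gap is that you stop at exactly the point where the argument breaks, and it does break: the coprimality $\bigl((f,m),p,s\bigr)=1$ that your plan needs (so that the criterion $h\mid(M,24)(pr,qs)$ collapses to $(f,m)\mid(f,24)$) fails in general, and with it the ``only if'' assertion itself. Take $M=2^8$, $m=32$, so $f=16$, $(f,m)=16$, $(f,24)=8$: here $D=(M,m^2)=256$, $(M,m)=32$, the relation $4v-u=1$ gives $W_{32}^{256}=\tfrac{1}{16}\smat{32&3\\256&32}=\smat{1&1/8\\0&1}W_0\smat{1&1/8\\0&1}=\tfrac{1}{2}\smat{4&3/8\\32&4}\in\Gamma_0^{*,8}(256)=N_{\SL_2(\R)}(\Gamma_0(256))$, although $16\nmid8$. (Even $m=M=25$ gives $W_{25}^{25}=\smat{1&1\\0&1}W_0\in N_{\SL_2(\R)}(\Gamma_0(25))$ with $(f,m)=5\nmid(f,24)=1$.) So no valuation bookkeeping can close the ``only if'' direction in the stated generality. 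What is true, and all the paper uses, is the statement for the representatives $m$ in Tables \ref{tab:Atkin-Lehner} and \ref{tab:other_involution}: writing $W_m^M=\tfrac{1}{\sqrt e}\smat{ep&q/h\\Mr/h&es}$ with $h=(f,m)$ one has $r=1$ and, for those $m$, $(h,p,s)=1$ (this holds precisely when $v_\ell(m)\le v_\ell(f)+v_\ell(M_0)$ for every prime $\ell\mid(f,m)$; e.g.\ for $M=25$, $m\in\{5,10,15,20\}$ one gets $p=m/5$ prime to $5$), after which your reduction via \eqref{eq:conjugate} and Lemma \ref{lem:e(M)} does give both ``only if'' halves. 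You should therefore either add such a hypothesis on $m$ or perform this finite verification; as written the proposal neither completes the proof nor can be completed.
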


\begin{proof}
	These follow from Lemma \ref{lem:gen_A-L_in_G*}, Lemma \ref{lem:G*_h_include}, and Proposition \ref{prop:normalizer_of_G_0}.
\end{proof}

%In these cases, we summarize Proposition \ref{prop:C'_0(M)_str} in the following proposition.

In the case when the modular curve $ X_0(M) $ has genus zero, we have the following.

\begin{proposition} \label{prop:inv_rekkyo}
	Let $ s \in X_0(M) $ be a cusp expressed as 
	$ s = m/M = W_m^M(i \infty) $ with an integer $ 0 \le m < M $.
	Let $ W_m^M $ be a generalized Atkin-Lehner involution.
	\begin{enumerate}
		\item If $ (M, s) \notin \{ (25, 1/5), (25, 2/5), (25, 3/5), (25, 4/5) \} $, then we have $ W_m^M \in N_{\SL_2(\R)}(\Gamma_0(M)) $.
		\item If
		\[
		(M, s) \notin
		\left\{
		\begin{gathered}
		(9, 1/3), (9, 2/3), \\
		(16, 1/2), (16, 1/4), (16, 3/4), (16, 1/8), \\
		(18, 1/3), (18, 2/3), (18, 1/6), (18, 5/6), \\
		(25, 1/5), (25, 2/5), (25, 3/5), (25, 4/5),	
		\end{gathered}
		\right\},
		\]
		then we have $ W_m^M \in N_{\SL_2(\R)}(G_0(M)) $.
		\item If $ M=25 $ and $ s \in \{ 1/5, 2/5, 3/5, 4/5 \} $, then we have
		$ W_m^M \in N_{\SL_2(\R)}(\Gamma_0^{(5)}(25)) $. 
	\end{enumerate}
\end{proposition}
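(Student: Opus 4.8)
The plan is to reduce all three assertions to the divisibility criteria of Proposition~\ref{prop:gen_A-L_normalize_G}, to run cusp by cusp over the finitely many genus-zero levels, and to treat separately those cusps whose representative matrix is a genuine Atkin-Lehner involution. Throughout, write $M = f^2 M_0$ with $M_0$ square-free. Recall that Proposition~\ref{prop:gen_A-L_normalize_G} gives, for a positive integer $m$, that $W_m^M$ normalizes $\Gamma_0(M)$ whenever $(f,m) \mid (f,24)$, normalizes $G_0(M)$ whenever $(f,m) \mid (f,2)$, and always normalizes $\Gamma_0^{(M/(f,m))}(M)$. Since the statement only claims membership in a normalizer for the cusps lying \emph{outside} the exceptional lists, I need only the sufficient directions of these criteria, together with the fact recorded after the definition of Atkin-Lehner involutions that every genuine Atkin-Lehner involution normalizes both $\Gamma_0(M)$ and $G_0(M)$.

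First I would dispose of the cusps that require no gcd computation. The cusp $i\infty = 1/M = W_1(i\infty)$ has $W_1 \in \Gamma_0(M)$, and the cusp $0 = W_0(i\infty)$ is represented by the Atkin-Lehner involution $W_0$; both matrices normalize $\Gamma_0(M)$ and $G_0(M)$. Likewise every cusp listed in Table~\ref{tab:Atkin-Lehner} (these arise only for $M \in \{6, 10, 12, 18\}$) is represented by a genuine Atkin-Lehner involution, so it normalizes $\Gamma_0(M)$ and $G_0(M)$ and is automatically non-exceptional. This already settles every cusp of the square-free levels, where each cusp is the image of $i\infty$ under a genuine Atkin-Lehner involution.

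The cusps that remain are exactly those of Table~\ref{tab:other_involution}, with $M \in \{4, 8, 9, 12, 16, 18, 25\}$ and $W_m^M$ a properly generalized Atkin-Lehner involution; here I would apply Proposition~\ref{prop:gen_A-L_normalize_G} directly. For part~(1), all of these levels except $M = 25$ have $f \in \{2,3,4\}$, hence $f \mid 24$ and $(f,m) \mid f = (f,24)$, so $W_m^M$ normalizes $\Gamma_0(M)$; only $M = 25$ has $f = 5 \nmid 24$, and its four cusps $k/5 = 5k/25$ (so $m = 5k$, $(f,m) = 5$) are precisely the ones excluded from part~(1). For part~(2), among the Table~\ref{tab:other_involution} cusps that are \emph{not} on the exceptional list only those of $M \in \{4, 8, 12\}$ occur; each has $f = 2$ and an even tabulated $m$, so $(f,m) = 2 = (f,2)$ and $W_m^M$ normalizes $G_0(M)$, while the remaining non-exceptional cusps for part~(2) are covered by the previous paragraph. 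Finally, part~(3) is immediate: for $M = 25$ and $s = k/5 = 5k/25$ one has $(f,m) = (5, 5k) = 5$, whence $M/(f,m) = 5$, and the third assertion of Proposition~\ref{prop:gen_A-L_normalize_G} yields $W_m^{25} \in N_{\SL_2(\R)}(\Gamma_0^{(5)}(25))$.

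The main obstacle is conceptual rather than computational. One must resist applying the criterion $(f,m)\mid(f,2)$ uniformly to every pair $(m,M)$: for a cusp whose chosen matrix is a genuine Atkin-Lehner involution the effective level parameter (the reduced $h'$ of Lemma~\ref{lem:G*_h_replace}) is strictly smaller than $(f,m)$, so the naive condition can fail even though the matrix does normalize $G_0(M)$ — for instance $W_9^{18}$ at $s = 1/2$, where $(f,m) = 3 \nmid (f,2) = 1$ and yet $W_9^{18} \in \Gamma_0^{*,1}(18)$ normalizes $G_0(18)$. The correct bookkeeping is therefore the split described above, into the genuine Atkin-Lehner cusps (handled by the Atkin-Lehner property) and the properly generalized ones (handled by Proposition~\ref{prop:gen_A-L_normalize_G}); once this split is made, each part reduces to the short gcd checks indicated, and the exceptional lists need not be shown to be tight since the proposition asserts normalization only off those lists.
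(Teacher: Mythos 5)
Your proof is correct and follows the route the paper intends: the paper gives no written proof of this proposition, presenting it as an immediate consequence of Proposition \ref{prop:gen_A-L_normalize_G} checked cusp by cusp over the genus-zero levels, which is exactly what you carry out. Your one genuine addition — splitting off the cusps represented by honest Atkin--Lehner involutions (e.g.\ $1/2 = 9/18$, where $(f,m)=3\nmid(f,2)=1$ yet $W_9^{18}\in\Gamma_0^{*,1}(18)$) and handling them by the directly verified normalization property rather than by the divisibility criterion — is not pedantry but necessary, since a naive application of the stated ``if and only if'' in Proposition \ref{prop:gen_A-L_normalize_G}\ref{item:prop:gen_A-L_normalize_G2} to that cusp would contradict the claim being proved; your gcd computations for the remaining cusps of Table \ref{tab:other_involution} are all correct.
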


% --------------------------------------------------------------------------

\section{The action of $ G_0(M) $ on cusps} \label{sec:action_on_cusps}

% --------------------------------------------------------------------------

In this section, we study the action of $ G_0(M) $ on cusps to give a condition that a pair of cusps is a point of modular correspondence and  calculate the intersection multiplicity.

Firstly, we describe explicitly cusps and the action of $ G_0(M) $ on them.

\begin{definition}
	Let 
	$ C_0(M) := \Gamma_0(M) \backslash (\Q \cup \{ i\infty \}) $
	be the set of cusps in the modular curve $ X_0(M) $ and we define its subset
	\[
	C_0(M)_n :=
	\left\{ \Gamma_0(M) \frac{l}{n} \relmiddle| \bar{l} \in (\Z/(n,M/n)\Z)^{\times} \right\}
	\]
	for a positive divisor $ n $ of $ M $.
\end{definition}

We have a decomposition
\begin{equation} \label{eq:cusp_rekkyo}
	C_0(M) = \coprod_{n \mid M} C_0(M)_n 
\end{equation}
by the following lemma.

\begin{lemma}[{\cite[Proposition 3.8.3]{DS}}] \label{lem:cusp_criterion}
	For integers $ l, n, l', n' $ with $ (l, n) = (l', n') = 1 $,
	let $ s = l/n, s' = l'/n' $.
	Then $ \Gamma_0(M) s = \Gamma_0(M) s' $ if and only if
	there exists an integer $ d $ such that 
	\[
	(d', M) = 1, \quad
	n' \equiv dn \bmod M, \quad
	dl' \equiv l \bmod (M, n).
	\]
\end{lemma}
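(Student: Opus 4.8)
The plan is to identify a cusp $l/n$ written in lowest terms with the primitive column vector $\binom{l}{n}\in\Z^2$ taken up to sign, on which $\Gamma_0(M)$ acts by left multiplication. Since $-I\in\Gamma_0(M)$, two cusps $s=l/n$ and $s'=l'/n'$ are $\Gamma_0(M)$-equivalent precisely when there is a $\gamma\in\Gamma_0(M)$ with $\gamma\binom{l}{n}=\binom{l'}{n'}$ (exact equality, the sign being absorbed into $\gamma$). I would then prove the two implications separately, reading ``$(d,M)=1$'' for the evident misprint ``$(d',M)=1$''.

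For the forward direction I take such a $\gamma=\smat{a & b\\ Mc & d}$ and read off the three conditions. Divisibility of the lower-left entry by $M$ gives $n'=Mcl+dn\equiv dn\bmod M$, while $\det\gamma=1$ forces $ad\equiv1\bmod M$, hence $(d,M)=1$; and reducing $dl'=d(al+bn)=adl+bdn$ modulo $(M,n)$, using $(M,n)\mid n$ together with $ad\equiv1\bmod(M,n)$, yields $dl'\equiv l\bmod(M,n)$. This direction is a routine computation.

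The converse is the substance. Since $\binom ln$ and $\binom{l'}{n'}$ are primitive and $\SL_2(\Z)$ acts transitively on primitive vectors, I can fix one $\tau_0=\smat{A&B\\ r_0&s}\in\SL_2(\Z)$ with $\tau_0\binom ln=\binom{l'}{n'}$. Every element of $\SL_2(\Z)$ carrying $\binom ln$ to $\binom{l'}{n'}$ is then $\tau_0\,\sigma\smat{1&m\\0&1}\sigma^{-1}$, where $\sigma$ has first column $\binom ln$ and $m$ runs over $\Z$ (the matrices $\sigma\smat{1&m\\0&1}\sigma^{-1}$ being exactly the stabiliser of $\binom ln$); a short computation shows its lower-left entry equals $r_0-m\,nn'$. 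Hence a representative lands in $\Gamma_0(M)$ as soon as $r_0\equiv m\,nn'\bmod M$ is solvable in $m$, that is, as soon as $(M,nn')\mid r_0$. Everything therefore reduces to deducing the divisibility $(M,nn')\mid r_0$ from the three hypotheses, and this is the step I expect to be the main difficulty.

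To prove $(M,nn')\mid r_0$ I would argue prime by prime: fix $p\mid M$, set $\alpha=v_p(n)$, and show $v_p(r_0)\ge\min\!\big(v_p(M),v_p(nn')\big)$. The hypothesis $n'\equiv dn\bmod M$ with $(d,M)=1$ first pins down $v_p(n')$ (equal to $\alpha$ when $\alpha<v_p(M)$), hence $v_p(nn')$. The cases $p\nmid n$ and $v_p(n)\ge v_p(M)$ follow at once from $r_0l+sn=n'$, so the real point is $1\le\alpha<v_p(M)$. There primitivity of the columns of $\tau_0$ makes $A$ a $p$-adic unit, and reducing $Al+Bn=l'$ modulo $p^{\alpha}$ gives $A\equiv l'\,l^{-1}\bmod p^{\alpha}$; the hypothesis $dl'\equiv l\bmod(M,n)$ is exactly what upgrades this to $Ad\equiv1\bmod p^{\alpha}$. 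Substituting $n'\equiv dn$ into the identity $r_0l'=An'-n$ (obtained from $\tau_0^{-1}\binom{l'}{n'}=\binom ln$) then yields $An'\equiv n\bmod p^{\min(v_p(M),2\alpha)}$, whence $v_p(r_0)\ge\min(v_p(M),2\alpha)=v_p((M,nn'))$. Combining these local estimates by the Chinese remainder theorem gives $(M,nn')\mid r_0$, produces the required $m$, and so yields $\gamma\in\Gamma_0(M)$ with $\gamma s=s'$, completing the proof.
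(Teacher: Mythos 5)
Your proof is correct, but it is not the paper's proof: the paper gives none, quoting the statement directly from \cite[Proposition 3.8.3]{DS} (including the misprint $(d',M)=1$ for $(d,M)=1$, which you correctly repaired), so the only meaningful comparison is with the argument in Diamond--Shurman. There the $\Gamma_0$-criterion is deduced from the $\Gamma_1(N)$-criterion by a coset/double-coset computation ($\Gamma_0(N)$ is a union of $\Gamma_1(N)$-cosets indexed by units $y$ modulo $N$, which is where the auxiliary unit $d$ enters), whereas you work directly with $\Gamma_0(M)$: cusps become primitive vectors up to sign, the set of elements of $\SL_2(\Z)$ carrying $\binom{l}{n}$ to $\binom{l'}{n'}$ is the coset $\tau_0\cdot\mathrm{Stab}\bigl(\binom{l}{n}\bigr)$ whose lower-left entries are exactly $r_0-mnn'$, and the whole lemma collapses to the single divisibility $(M,nn')\mid r_0$, verified prime by prime. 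Your route is more elementary and self-contained, and it isolates where each hypothesis is used (the congruence $n'\equiv dn$ controls $v_p(n')$; the congruence $dl'\equiv l \bmod (M,n)$ upgrades $A\equiv l'l^{-1}$ to $Ad\equiv 1 \bmod p^{\alpha}$); the Diamond--Shurman route is more structural and yields the $\Gamma_1(N)$ statement along the way. I checked the computations: the lower-left entry of $\tau_0\sigma\smat{1&m\\0&1}\sigma^{-1}$ is indeed $r_0-mn(r_0l+sn)=r_0-mnn'$, solvability of $mnn'\equiv r_0\bmod M$ is equivalent to $(M,nn')\mid r_0$, and the three valuation cases are exhaustive and correct (including the degenerate cusps $1/0$ and $0/1$). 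Two cosmetic points: in the case $p\nmid n$ the conclusion is trivial because the hypothesis $n'\equiv dn\bmod M$ with $(d,M)=1$ forces $p\nmid n'$, so the identity $r_0l+sn=n'$ is not what is needed there; and in the case $1\le\alpha<v_p(M)$, to see that $A$ is a $p$-adic unit you should first note $p\mid r_0$ (from $r_0l\equiv n'\equiv 0\bmod p$ and $p\nmid l$) and then invoke $\det\tau_0=1$ or primitivity of the first column --- primitivity of the columns alone, as written, is not quite enough.
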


By the above lemma, the group $ G_0(M) $ acts on $ C_0(M)_n $ for a positive divisor $ n $ of $ M $.
This action is essentially it of $ G_0(M)_{i \infty} $ by Proposition \ref{prop:str_G_0(M)}.
We describe it in the following Proposition.

\begin{proposition} \label{prop:s=A(s')}
	Let $ 0 \le m, m' < M $ be integers, $ s = m/M, s' = m'/M $ be rational numbers and
	\[
	A = 
	\begin{pmatrix} a & b \\ 0 & d \end{pmatrix}
	\in G_0(M) \cap M_2(\Z).
	\]
	Then $ \Gamma_0(M) s = \Gamma_0(M) A(s') $ if and only if
	\[
	(M, m) = (M, m'), \quad m'ad \equiv m(d, m'a + Mb)^2 \bmod M.
	\]		
\end{proposition}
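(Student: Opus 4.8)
The plan is to apply the cusp-equivalence criterion of Lemma~\ref{lem:cusp_criterion} to the two cusps $s = m/M$ and $A(s')$, reducing everything to explicit congruences that I can then simplify. First I would compute $A(s')$ explicitly: writing $s' = m'/M$ and $A = \smat{a & b \\ 0 & d}$, we have
\[
A(s') = \frac{a m' + b M}{d M} = \frac{a m' + b M}{dM}.
\]
I would reduce this fraction to lowest terms. Setting $g := (am' + bM, dM)$, the reduced numerator and denominator are $l' := (am'+bM)/g$ and $n' := dM/g$. The quantity $(d, m'a + Mb)$ appearing in the statement is essentially this $g$ up to the factor $d$; I expect the key bookkeeping to be the identification $g = (am'+bM, dM)$ and relating it to $(d, am'+Mb)$, using that $a \in \Z_{(M)}^\times$ (hence $a$ is coprime to $M$, so coprime to $dM$ in the relevant sense) and $ad = N$ is coprime to $M$.

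Next I would apply Lemma~\ref{lem:cusp_criterion} directly with $s = m/M$ (so $l = m/(m,M)$, $n = M/(m,M)$ after reduction) and $s' = A(s') = l'/n'$. The lemma gives $\Gamma_0(M)s = \Gamma_0(M)s'$ iff there is $d'$ with $(d',M)=1$, $n' \equiv d' n \bmod M$, and $d' l' \equiv l \bmod (M,n)$. I would translate the first congruence $n' \equiv d' n \bmod M$ into a condition forcing the denominators $n$ and $n'$ to agree as divisors of $M$, which is exactly the statement $(M,m) = (M,m')$; this uses that $(d',M)=1$ means multiplication by $d'$ permutes residues coprime to the relevant modulus. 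Once the denominators match, the second congruence becomes a relation purely on numerators that I would massage into the stated form $m' a d \equiv m (d, m'a + Mb)^2 \bmod M$.

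The main obstacle will be the careful unwinding of the greatest-common-divisor factor $g = (am'+bM, dM)$ and showing it equals $(d, am'+Mb)$ up to the unit factors that are coprime to $M$, so that squaring it produces exactly the term $(d, m'a+Mb)^2$. Because $ad = N$ is coprime to $M$ and $a \in \Z_{(M)}^\times$, the prime factors of $g$ relevant modulo $M$ come only from $d$ and the interaction with $M$; I would argue that $g = (d, am'+Mb) \cdot (\text{a factor coprime to }M)$ and that clearing denominators to pass from the rational cusp criterion to the congruence modulo $M$ introduces exactly one factor of $g$ on each side, yielding the square. I expect to verify the equivalence in both directions: the forward direction by producing the explicit $d'$, and the backward direction by checking that the stated congruences supply a valid $d'$ satisfying the hypotheses of Lemma~\ref{lem:cusp_criterion}. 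The condition $(M,m)=(M,m')$ is what guarantees $s$ and $s'$ lie in the same piece $C_0(M)_n$ of the decomposition~\eqref{eq:cusp_rekkyo}, so I would use the decomposition to restrict attention to that single orbit component from the outset, which streamlines the numerator congruence considerably.
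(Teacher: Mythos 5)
Your overall strategy is exactly the paper's: compute $A(s') = (am'+bM)/(dM)$, reduce to lowest terms, feed both cusps into Lemma \ref{lem:cusp_criterion}, and take $d' = d/g$ with $g := (d, m'a+Mb)$. However, your key bookkeeping claim about the gcd — the step you yourself flag as the main obstacle — is wrong as stated. Since $(d,M)=1$ and $(a,M)=1$ (both follow from $ad = \det A \in \Z_{(M)}^{\times}$ with $A$ integral), one has
\[
(am'+bM,\; dM) \;=\; (am'+bM,\, d)\,(am'+bM,\, M) \;=\; (d,\, m'a+Mb)\cdot (M, m').
\]
The cofactor is $(M,m')$, a divisor of $M$ — not ``a factor coprime to $M$'' as you assert, and not ``up to the factor $d$'' either. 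This is not a cosmetic slip: it is precisely the factor $(M,m')$ that, after cancellation, makes the reduced denominator of $A(s')$ equal to $\frac{M}{(M,m')}\cdot\frac{d}{g}$ with $\frac{d}{g}$ coprime to $M$, so that the denominator congruence of Lemma \ref{lem:cusp_criterion} forces $(M,m)=(M,m')$. If the cofactor really were coprime to $M$, the reduced denominator would be divisible by $M$, i.e.\ you would conclude that $A(s')$ is always equivalent to the cusp $0$, which is false; so the computation of $(n',M)$ would derail at that point. Once the factorization is corrected, the rest of your plan goes through exactly as in the paper: with $d'=d/g$ the numerator congruence reads $\frac{d}{g}\cdot\frac{m'a+Mb}{(M,m')g} \equiv \frac{m}{(M,m)} \bmod \frac{M}{(M,m)}$, and multiplying through by $(M,m)$ and by $g^{2}$ (invertible mod $M$ since $g \mid d$) yields $m'ad \equiv m\,g^{2} \bmod M$, as required.
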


\begin{proof}
	Let $ g := (d, m'a + Mb) $.
	By Lemma \ref{lem:cusp_criterion}, $ \Gamma_0(M) s = \Gamma_0(M) A(s') $ if and only if	there exists an integer $ d' $ such that 
	\[
	(d', M) = 1, \quad
	\frac{M}{(M, m')} \frac{d}{g} \equiv d' \frac{M}{(M, m)} \bmod M, \quad
	d' \frac{m'a + Mb}{(M, m')g} \equiv \frac{m}{(M, m)} \bmod \frac{M}{(M, m)}.
	\]
	This is equivalent to the condition in the statement since we can choose 
	$ d' = d/g $.	
\end{proof}

Secondly, we establish definitions of some kind of cusps and study the action of $ G_0(M) $ on them.

\begin{definition}
	We define subsets of $ C_0(M) $ as
	\begin{align*}
		C'_0(M) &:=
		\{ W(i\infty) \in C_0(M) \mid 
		W \in N_{\SL_2(\R)}(\Gamma_0(M)) \}, \\
		C^{\prime \prime}_0(M) &:=
		\{ W(i\infty) \in C_0(M) \mid 
		W \in N_{\SL_2(\R)}(\Gamma_0(M)) \cap N_{\SL_2(\R)}(G_0(M)) \}.	
	\end{align*}
\end{definition}

%In section \ref{sec:normalizer}, we give a decomposition of $ C'_0(M) $ and $ C^{\prime \prime}_0(M) $.
%The following Proposition follows from Proposition \ref{prop:gen_A-L_normalize_G}.

\begin{proposition} \label{prop:C'_0(M)_str}
	It holds that
	\[
	C'_0(M) = \coprod_{n \mid M, \, (n, M/n) \mid (24, M/(M, 24))} C_0(M)_n, \quad
	C''_0(M) = \coprod_{n \mid M, \, (n, M/n) \mid (2, M/(M, 2))} C_0(M)_n.	
	\]
\end{proposition}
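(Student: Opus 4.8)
The goal is to identify which cusps $W(i\infty)$ lie in $C'_0(M)$ and $C''_0(M)$ by characterizing exactly when a generalized Atkin-Lehner involution $W_m^M$ normalizes $\Gamma_0(M)$ (resp.\ both $\Gamma_0(M)$ and $G_0(M)$), and then translating the resulting arithmetic condition into a statement about which pieces $C_0(M)_n$ of the cusp decomposition \eqref{eq:cusp_rekkyo} are included.

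\medskip

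The plan is as follows. First I would recall from Proposition \ref{prop:G*f} and Lemma \ref{lem:e(M)} that every element of $N_{\SL_2(\R)}(\Gamma_0(M))$ lies in $\Gamma_0^{*,f}(M)$ and, by Proposition \ref{prop:normalizer_of_G_0}\ref{item:prop:normalizer_of_G_0_1}, that $N_{\SL_2(\R)}(\Gamma_0(M)) = \Gamma_0^{*,(f,24)}(M)$. Since the cusps are exhausted by the classes $\Gamma_0(M)\,m/M$ with $m/M = W_m^M(i\infty)$, and by Proposition \ref{prop:gen_A-L_normalize_G}\ref{item:prop:gen_A-L_normalize_G1} the involution $W_m^M$ normalizes $\Gamma_0(M)$ precisely when $(f,m)\mid(f,24)$, the set $C'_0(M)$ consists of those cusp classes represented by an $m/M$ with $(f,m)\mid(f,24)$. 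Analogously, using part \ref{item:prop:gen_A-L_normalize_G2} of that proposition, $C''_0(M)$ collects those with $(f,m)\mid(f,2)$. The crux is then to rewrite these divisibility conditions on $(f,m)$ in terms of the invariant $n:=(M,m)$ that indexes the decomposition \eqref{eq:cusp_rekkyo}, so that a cusp in $C_0(M)_n$ satisfies the condition iff $n$ does.

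\medskip

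The key computational step is relating $(f,m)$ and $(n,M/n)$ where $n=(M,m)$. Writing $M=f^2M_0$ with $M_0$ square-free and analyzing the $p$-adic valuations prime by prime, I expect to show that $(n, M/n) = (f,m)^2/(\text{something square-free})$, or more precisely that the square part of $(n,M/n)$ is governed exactly by $(f,m)$; the upshot should be that the condition $(f,m)\mid(f,24)$ is equivalent to $(n,M/n)\mid(24, M/(M,24))$ and likewise $(f,m)\mid(f,2)$ is equivalent to $(n,M/n)\mid(2,M/(M,2))$. One must also verify that the condition depends only on the cusp class $\Gamma_0(M)\,m/M$ and not on the chosen representative $m$; this follows because $n=(M,m)$ is a class invariant by Lemma \ref{lem:cusp_criterion}, and all representatives in the same $C_0(M)_n$ share the same value of $(n,M/n)$. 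Combining these with the disjoint decomposition \eqref{eq:cusp_rekkyo} yields the two claimed coproducts.

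\medskip

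The main obstacle I anticipate is the valuation bookkeeping in the step relating $(f,m)$ to $(n,M/n)$, together with confirming that ranging over all $m$ with a fixed $n=(M,m)$ really produces a uniform answer — in particular that whenever some representative of $C_0(M)_n$ gives an admissible $W_m^M$, \emph{every} representative does. Care is needed at the primes $p$ with $p^2\mid M$ (the non-square-free part), since there $(f,m)$ can be a genuine power of $p$ and the interaction with the constants $24$ and $2$ is exactly what distinguishes $C'_0(M)$ from $C''_0(M)$; this is precisely the phenomenon isolated in Proposition \ref{prop:inv_rekkyo} for $M\in\{9,16,18,25\}$, so I would cross-check the general formula against those explicit cases.
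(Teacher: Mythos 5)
Your overall strategy coincides with the paper's: combine the decomposition \eqref{eq:cusp_rekkyo} with Proposition \ref{prop:gen_A-L_normalize_G}, which says that $W_m^M$ normalizes $\Gamma_0(M)$ (resp.\ $G_0(M)$) if and only if $(f,m)\mid(f,24)$ (resp.\ $(f,m)\mid(f,2)$), and then convert this divisibility condition on $(f,m)$ into one on $(n,M/n)$. The paper's proof does exactly this and nothing else: it asserts $(f,m)=(f,n)$ for the index $n$ of the cusp and then argues $(f,n)=(n,M/n)$ from $M=f^2M_0$ with $M_0$ square-free, using $(n,M/n)^2\mid M$ in one direction and $(f,n)^2\mid M$ in the other. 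So there is no difference of route; the only question is whether you have actually supplied the content of that conversion.

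You have not, and this is a genuine gap rather than a presentational one. The conversion is the entire proposition, and you leave it as something you ``expect to show,'' guessing the shape $(n,M/n)=(f,m)^2/(\text{square-free})$; the identity the paper actually uses is $(f,m)=(n,M/n)$, with no square. More seriously, the equivalence you then want to deduce, $(f,m)\mid(f,24)\iff(n,M/n)\mid(24,M/(M,24))$, cannot be taken on faith: for $M=16$ and the cusp $1/4$ (so $m=4$, $f=4$) one has $(f,m)=4=(f,24)$, so $W_4^{16}$ normalizes $\Gamma_0(16)$ and this cusp lies in $C'_0(16)$ consistently with Corollary \ref{cor:aut_rekkyo}, whereas $(n,M/n)=4$ does not divide $(24,16/(16,24))=2$. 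So the cross-check against Proposition \ref{prop:inv_rekkyo} that you propose as a sanity check is in fact where the argument must be settled, and the prime-by-prime valuation bookkeeping at primes $p$ with $p^2\mid M$ (indeed with $p^3\mid M$, which is exactly the $M=16$ phenomenon) is the irreducible core of the proof. Two further small points: the decomposition \eqref{eq:cusp_rekkyo} is indexed by the denominator $M/(M,m)$ of the reduced cusp, not by $(M,m)$ as you write (harmless for the symmetric quantity $(n,M/n)$, but it misidentifies which piece $C_0(M)_n$ a cusp belongs to); and your concern about independence of the representative $m$ is legitimate but is resolved by checking, as you would in the valuation computation, that $(f,m)$ depends only on $(M,m)$, since $(f,m)=(f,(M,m))$ because $f\mid M$.
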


\begin{proof}
	Let $ f $ and $ M_0 $ be positive integers such that $ M = f^2 M_0 $ and $ M_0 $ is square-free.
	For a positive integer $ m $, let $ n := M/(M, m) $.
	It suffices to show that $ (f, n) = (n, M/n) $ since $ (f, m) = (f, n) $.
	Since $ (n, M/n)^2 \mid n \cdot M/n = M $, we have $ (n, M/n) \mid (f, n) $.
	Since 
	\[
	(f, n)^2 \mid M = \frac{M}{n} \frac{n}{(f, n)} (f, n),
	\]
	we have $ (f, n) \mid M/n $ and thus $ (f, n) \mid (n, M/n) $.
	By Proposition \ref{prop:gen_A-L_normalize_G}, we obtain the statement.
\end{proof}

The actions of $ G_0(M) $ on $ C'_0(M) $ and $ C''_0(M) $ is described as follows by Proposition \ref{prop:C'_0(M)_str}.

\begin{corollary}
	The sets p--00--$ C''_0(M), C'_0(M) \setminus C''_0(M) $, and $ C_0(M) \setminus C'_0(M) $ are stable under the action of $ G_0(M) $.
\end{corollary}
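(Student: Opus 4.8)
The plan is to exploit the fact that the partition \eqref{eq:cusp_rekkyo} of $ C_0(M) $ into the pieces $ C_0(M)_n $ simultaneously refines all three sets in the statement, together with the observation that each individual piece $ C_0(M)_n $ is already stable under $ G_0(M) $. Once this is in place, the corollary becomes a purely formal consequence: a union (or a difference of unions) of $ G_0(M) $-stable sets is again $ G_0(M) $-stable.

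First I would recall why each $ C_0(M)_n $ is $ G_0(M) $-stable. By Proposition \ref{prop:str_G_0(M)} the action of $ G_0(M) $ on $ C_0(M) $ is induced by that of $ G_0(M)_{i\infty} $, so it suffices to track upper-triangular integral representatives $ A = \smat{a & b \\ 0 & d} \in G_0(M) \cap M_2(\Z) $. For such an $ A $ and a cusp $ s' = m'/M $, the criterion of Proposition \ref{prop:s=A(s')} shows that $ \Gamma_0(M) s = \Gamma_0(M) A(s') $ forces $ (M, m) = (M, m') $; equivalently, $ A $ preserves the invariant $ n := M/(M, m') $ attached to $ s' $. Hence $ A(s') \in C_0(M)_n $ whenever $ s' \in C_0(M)_n $, which is precisely the assertion, noted after Lemma \ref{lem:cusp_criterion}, that $ G_0(M) $ acts on each $ C_0(M)_n $. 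Next I would read off from Proposition \ref{prop:C'_0(M)_str} that $ C''_0(M) $ and $ C'_0(M) $ are each a disjoint union of such pieces, namely those $ C_0(M)_n $ with $ (n, M/n) \mid (2, M/(M,2)) $ and those with $ (n, M/n) \mid (24, M/(M,24)) $ respectively; being unions of $ G_0(M) $-stable sets, both are $ G_0(M) $-stable. Combining this with the total decomposition \eqref{eq:cusp_rekkyo} expresses the two remaining sets as unions of pieces as well,
\[
C'_0(M) \setminus C''_0(M)
= \coprod_{\substack{n \mid M,\ (n, M/n) \mid (24, M/(M,24)), \\ (n, M/n) \nmid (2, M/(M,2))}} C_0(M)_n,
\qquad
C_0(M) \setminus C'_0(M)
= \coprod_{\substack{n \mid M, \\ (n, M/n) \nmid (24, M/(M,24))}} C_0(M)_n,
\]
each again a union of stable pieces, hence stable.

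There is no genuine obstacle here: once the action is seen to respect the grading by $ n $ (equivalently, by the invariant $ (M, m) $), everything reduces to Proposition \ref{prop:C'_0(M)_str} and the disjointness in \eqref{eq:cusp_rekkyo}. The only point requiring a moment's care is verifying that $ G_0(M) $ genuinely preserves each $ C_0(M)_n $ rather than merely permuting the pieces among themselves; this is exactly what the equality $ (M, m) = (M, m') $ in Proposition \ref{prop:s=A(s')} guarantees, and I would highlight that step as the crux of the argument.
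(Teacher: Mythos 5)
Your proof is correct and follows essentially the same route as the paper, which derives the corollary directly from Proposition \ref{prop:C'_0(M)_str} together with the earlier observation (after Lemma \ref{lem:cusp_criterion}) that $G_0(M)$ preserves each piece $C_0(M)_n$. The paper leaves these steps implicit, whereas you spell out why the invariant $(M,m)$, and hence the index $n$, is preserved via Proposition \ref{prop:s=A(s')}; this is a faithful elaboration rather than a different argument.
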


In the case when the modular curve $ X_0(M) $ has genus zero, we have the following by Proposition \ref{prop:inv_rekkyo}.

\begin{corollary} \label{cor:aut_rekkyo}
	Let $ 1 \le M \le 10 $ or $ M \in \{ 12, 13, 16, 18, 25 \} $.
	\begin{enumerate}
		\item If $ M \neq 25 $, then $ C_0(M) = C'_0(M) $.
		\item If $ M \notin \{ 9, 16, 18, 25 \} $, then $ C_0(M) = C'_0(M) = C^{\prime \prime}_0(M) $.
		\item If $ M \in \{ 9, 16, 18, 25 \} $, then
		\begin{alignat*}{2}
			C_0(9) \smallsetminus C^{\prime \prime}_0(9) &= 
			\left\{ \dfrac{1}{3}, \dfrac{2}{3} \right\}, \quad
			&
			C^{\prime \prime}_0(9) &= 
			\left\{ i\infty, 0 \right\}, 
			\\
			C_0(16) \smallsetminus C^{\prime \prime}_0(16) &= 
			\left\{ \dfrac{1}{2}, \dfrac{1}{4}, \dfrac{3}{4}, \dfrac{1}{8} \right\}, \quad
			&	
			C^{\prime \prime}_0(16) &= 
			\left\{ i\infty, 0 \right\}, 
			\\
			C_0(18) \smallsetminus C^{\prime \prime}_0(18) &= 
			\left\{ \dfrac{1}{3}, \dfrac{2}{3}, \dfrac{1}{6}, \dfrac{5}{6} \right\}, \quad
			&
			C^{\prime \prime}_0(18) &= 
			\left\{ i\infty, 0, \dfrac{1}{2}, \dfrac{1}{9} \right\}, 
			\\	
			C_0(25) \smallsetminus C^{\prime \prime}_0(25) &= 
			\left\{ \dfrac{1}{5}, \dfrac{2}{5}, \dfrac{3}{5}, \dfrac{4}{5} \right\}, \quad
			&
			C'_0(25) &= C^{\prime \prime}_0(25) = 
			\left\{ i \infty, 0 \right\}.
		\end{alignat*}
	\end{enumerate}
\end{corollary}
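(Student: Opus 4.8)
The plan is to reduce the corollary to a finite computation by specializing the exact decomposition of Proposition \ref{prop:C'_0(M)_str} (together with \eqref{eq:cusp_rekkyo}) to each of the finitely many levels with $ 1 \le M \le 10 $ or $ M \in \{12,13,16,18,25\} $. Writing $ M = f^2 M_0 $ with $ M_0 $ square-free, the two sets are the disjoint unions $ C'_0(M) = \coprod_n C_0(M)_n $ over divisors $ n \mid M $ with $ (n, M/n) \mid (f,24) $, and $ C''_0(M) = \coprod_n C_0(M)_n $ over those with $ (n, M/n) \mid (f,2) $. Thus for each $ M $ the task is purely arithmetic: list the divisors $ n \mid M $, compute the invariant $ (n, M/n) $, test the two divisibility conditions, and read off the cusps in each surviving or discarded component from $ C_0(M)_n = \{\, \Gamma_0(M)\, l/n : \bar l \in (\Z/(n,M/n)\Z)^\times \,\} $, keeping in mind $ 0 \in C_0(M)_1 $ and $ i\infty \in C_0(M)_M $.

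Parts (i) and (ii) then follow uniformly. Because $ (n,M/n)^2 \mid M = f^2 M_0 $ with $ M_0 $ square-free, one has $ (n,M/n) \mid f $ for every $ n $, and the value $ f $ itself is attained at $ n=f $; so the \emph{largest} invariant over $ n \mid M $ is exactly $ f $. A short check gives $ f \in \{1,2,3,4,5\} $ across these levels, with $ f=5 $ only at $ M=25 $. Since $ f \mid 24 $ for $ f \le 4 $, the condition $ (n,M/n) \mid (f,24) $ holds for all $ n $ exactly when $ M \ne 25 $, giving $ C_0(M) = C'_0(M) $ in (i). Likewise $ f \mid 2 $ precisely for $ M \notin \{9,16,18,25\} $ (the levels with $ f \in \{3,4,5\} $), which yields $ C_0(M) = C''_0(M) $ and hence, combined with (i), part (ii).

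For part (iii) I would carry out the same bookkeeping at the four remaining levels. For example at $ M=9 $ ($ f=3 $) the divisors are $ 1,3,9 $ with invariants $ 1,3,1 $; since $ 3 \nmid (3,2)=1 $, the single component $ C_0(9)_3 = \{1/3, 2/3\} $ is the only one excluded from $ C''_0(9) $, while $ C_0(9)_1=\{0\} $ and $ C_0(9)_9=\{i\infty\} $ survive, giving $ C''_0(9)=\{i\infty,0\} $ and $ C_0(9)\smallsetminus C''_0(9)=\{1/3,2/3\} $. The levels $ 16,18,25 $ are handled the same way, tabulating $ (n,M/n) $ against the bounds $ (f,24) $ and $ (f,2) $ and then expanding each retained or discarded $ C_0(M)_n $ into its cusps.

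The step I expect to be most delicate is the non-membership direction, together with the matching of a cusp $ s=m/M $ to its governing invariant. The naive invariant $ (f,m) $ attached to the single generalized Atkin--Lehner involution $ W_m^M $ (Proposition \ref{prop:gen_A-L_normalize_G}) can differ from the component invariant $ (n,M/n) $ with $ n=M/(M,m) $; since the definitions of $ C'_0(M),C''_0(M) $ quantify over \emph{all} normalizing $ W $ and not merely $ W_m^M $, a cusp may be reached by a normalizer element of larger eterminant $ e $ even when its own involution fails to normalize. It is precisely the \emph{exact} decomposition of Proposition \ref{prop:C'_0(M)_str}, rather than the purely sufficient conditions of Proposition \ref{prop:inv_rekkyo}, that settles these exclusions, so the essential care lies in using the invariant $ (n,M/n) $ consistently and not substituting $ (f,m) $ for it.
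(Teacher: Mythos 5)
Your route is genuinely different from the paper's: the paper deduces the corollary in one line from Proposition \ref{prop:inv_rekkyo}, i.e.\ from the behaviour of the single involution $W_m^M$ attached to each cusp, whereas you specialize the exact component decomposition of Proposition \ref{prop:C'_0(M)_str}. Your observation that the non-membership assertions cannot come from Proposition \ref{prop:inv_rekkyo} alone (the definitions of $C'_0(M)$ and $C''_0(M)$ quantify over \emph{all} normalizing $W$, not just $W_m^M$) is correct and identifies a real weakness of the paper's one-line derivation. But your proposal has a concrete gap. First, you misquote Proposition \ref{prop:C'_0(M)_str}: its conditions are $(n,M/n)\mid(24,M/(M,24))$ and $(n,M/n)\mid(2,M/(M,2))$, not $(n,M/n)\mid(f,24)$ and $(n,M/n)\mid(f,2)$; these disagree already at $M=16$, where $(f,24)=4$ while $(24,M/(M,24))=2$, and the discrepancy decides whether $1/4,3/4\in C'_0(16)$, i.e.\ whether part (i) even comes out of the computation.

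Second, and more seriously, the case you defer as ``handled the same way'' is exactly the one where the bookkeeping fails to reproduce the statement. For $M=16$ the divisors $n=2$ and $n=8$ have $(n,16/n)=2$, which divides $(f,2)=2$ and also $(2,16/(16,2))=2$, so under either version of the criterion the components $C_0(16)_2=\{1/2\}$ and $C_0(16)_8=\{1/8\}$ land \emph{inside} $C''_0(16)$, whereas the corollary asserts $C''_0(16)=\{i\infty,0\}$. This is not a slip in your arithmetic: $W_2^{16}=\frac{1}{2}\smat{2&1\\16&10}$ lies in $\Gamma_0^{*,2}(16)=N_{\SL_2(\R)}(G_0(16))$ by Lemma \ref{lem:gen_A-L_in_G*} and Proposition \ref{prop:normalizer_of_G_0}, and it sends $i\infty$ to $1/8$, so your method is consistent with the paper's earlier results but contradicts the corollary (and Proposition \ref{prop:inv_rekkyo}'s exception list) at this level. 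You must either resolve this conflict or at minimum carry out $M=16,18,25$ explicitly; asserting that they follow ``the same way'' as $M=9$ conceals the one level at which your route and the paper's give different answers, and as written your proposal does not establish the stated sets for $M=16$.
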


% --------------------------------------------------------------------------

\section{Intersection multiplicities at cusps} \label{sec:mult_cusp}

% --------------------------------------------------------------------------

In the rest of this paper, we assume that the modular curve $ X_0(M) $ has genus zero, that is, $ 1 \le M \le 10 $ or $ M= 12, 13, 16, 18, 25 $.

Our goal in this section is to calculate the intersection multiplicity of the modular correspondences at a pair $ (s, s^\prime) $ of cusps in the modular curve $ X_0(M) $. 

Firstly, we consider the condition $ \Gamma_0(M) s = \Gamma_0(M) A(s') $
for $ s, s' \in \Q \cup \{ i \infty \} $ and a matrix 
$ A \in G_0(M) $ in  (\ref{eq:def_of_T}).

Let $ t \colon X_0(M) \xrightarrow{\sim} \bbP^1(\C) $ be the isomorphism defined in Section \ref{sec:known}.

\begin{proposition} \label{prop:order_t_W}
	Let $ 0 \le m, m' < M $ be integers and put
	\[
	s = m/M, \quad s' = m' / M, \quad 
	D := (M, m^2), \quad D' := (M, m^{\prime 2}).
	\]
	Let
	\[
	W = W_m^M = \frac{1}{\sqrt{D}} 
	\begin{pmatrix} m & u \\ M & v \end{pmatrix}, \quad
	W' = W_{m'}^{M} = \frac{1}{\sqrt{D'}} 
	\begin{pmatrix} m' & * \\ M & * \end{pmatrix}
	\in \SL_2(\R)
	\]
	be generalized Atkin-Lehner involutions and
	\[
	A = 
	\begin{pmatrix} a & b \\ 0 & d \end{pmatrix}
	\in G_0(M) \cap M_2(\Z)
	\]
	be a matrix such that 
	$ \Gamma_0(M) s = \Gamma_0(M) A(s') $.
	Then the order of
	$ t \circ AW' (\tau) - t(s) $ with respect to $ q := e^{2 \pi \iu \tau} $ is $ (d, m'a + Mb)^2/ad $. 
\end{proposition}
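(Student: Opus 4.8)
The plan is to compute the $q$-expansion of $t \circ AW'(\tau)$ near the cusp $s$ and read off the vanishing order of $t \circ AW'(\tau) - t(s)$. The essential point is that $t$ is the Hauptmodul with the specific expansion $t(\tau) = q^{-1} + c_0 + c_1 q + \cdots$ at $i\infty$, so near any cusp $s = W_m^M(i\infty)$ the local behaviour of $t$ is governed by $t \circ W_m^M$. The key structural observation I would first establish is that the matrix $AW'$ sends $i\infty$ to a point that is $\Gamma_0(M)$-equivalent to $s = W(i\infty)$; indeed $AW'(i\infty) = A(s')$ and the hypothesis $\Gamma_0(M)s = \Gamma_0(M)A(s')$ gives exactly this. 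Hence $W^{-1}AW'$ fixes $i\infty$ up to an element of $\Gamma_0(M)$, so I can write $AW' = \gamma W B$ for some $\gamma \in \Gamma_0(M)$ and some upper-triangular $B = \bigl(\begin{smallmatrix} \alpha & \beta \\ 0 & \delta \end{smallmatrix}\bigr)$ fixing $i\infty$.

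First I would carry out the matrix computation to determine $B = W^{-1}\gamma^{-1}AW'$ explicitly, or at least to determine its diagonal entries $\alpha, \delta$, since these control the local scaling. The point is that $t \circ W$ is a local uniformizer at $s$ in the variable $q$ (up to the width of the cusp), so $t \circ AW'(\tau) = t \circ W \circ B(\tau)$, and the map $B$ acts on $q$ by $q \mapsto \zeta\, q^{\delta/\alpha}$ for some root of unity $\zeta$ coming from the translation part $\beta$. The vanishing order of $t \circ AW'(\tau) - t(s)$ is then $|\delta/\alpha|$ times the order of $t \circ W(\tau) - t(s)$, and since $t \circ W$ has a simple pole or is a uniformizer the latter contributes the cusp width. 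Tracking the determinants, with $\det A = ad$ and the normalizations by $\sqrt{D}, \sqrt{D'}$ built into $W, W'$, I expect the ratio $\delta/\alpha$ to simplify to $(d, m'a+Mb)^2/(ad)$ after substituting $g := (d, m'a + Mb)$ as in Proposition \ref{prop:s=A(s')}.

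The main obstacle will be bookkeeping the denominators and the determinant factors correctly: the generalized Atkin-Lehner involutions $W, W'$ are normalized to lie in $\SL_2(\R)$ by dividing by $\sqrt{D}, \sqrt{D'}$, whereas $A$ has determinant $N = ad$, so the composite $AW'$ has determinant $ad$ scaled by $1/\sqrt{D'}$, and I must reconcile this with the $\SL_2(\R)$-normalization of $W$ to extract the correct exponent. The cleanest way to manage this is to compute $W^{-1}AW'$ directly as a $2\times 2$ matrix, use that its bottom-left entry must be divisible by $M$ (reflecting membership in a suitable $\Gamma_0(M)$-coset after multiplying by $\gamma$), and then identify the diagonal ratio. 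The integer $g = (d, m'a+Mb)$ is precisely the factor one can pull out to make the bottom-left entry of $W^{-1}AW'$ an integer multiple of $M$, which is why it appears squared in the final exponent. I would keep the computation organized by factoring $W^{-1}AW'$ as
\[
W^{-1}AW' = \frac{1}{\sqrt{DD'}}\begin{pmatrix} v & -u \\ -M & m \end{pmatrix}\begin{pmatrix} a & b \\ 0 & d \end{pmatrix}\begin{pmatrix} m' & * \\ M & * \end{pmatrix},
\]
and then normalizing the resulting upper-triangular part after the $\Gamma_0(M)$-reduction, at which stage the claimed order $(d, m'a+Mb)^2/ad$ emerges from the ratio of diagonal entries.
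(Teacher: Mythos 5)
Your strategy coincides with the paper's own: use the hypothesis to factor $AW'$ through $\Gamma_0(M)$ times $W$ times an upper-triangular matrix with diagonal $(g, ad/g)$, where $g=(d,m'a+Mb)$ is extracted from the first column of $W^{-1}AW'$, and then multiply the resulting exponent $g^2/(ad)$ by the order of $t\circ W(\tau)-t(s)$ in $q$. That skeleton is sound, and the gcd identification you defer (``I expect the ratio to simplify'') is carried out in the paper via $k-vl=d$ and $(k,M)=1$, so it works. (One small slip: an upper-triangular $\smat{\alpha & \beta \\ 0 & \delta}$ rescales $q\mapsto \zeta q^{\alpha/\delta}$, not $q^{\delta/\alpha}$; with $\alpha=g$, $\delta=ad/g$ this still yields $g^2/(ad)$.)

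The genuine gap is the step ``$t\circ W$ is a local uniformizer at $s$ in the variable $q$,'' together with the implicit claim that the residual upper-triangular discrepancy between your $\gamma\in\Gamma_0(M)$ and the matrix built from the first column of $W^{-1}AW'$ is $\pm$ unipotent with integer entries (so that it only contributes a root of unity to $q$). Both statements are automatic only when $W\in N_{\SL_2(\R)}(\Gamma_0(M))$, in which case $t\circ W\colon X_0(M)\to\bbP^1(\C)$ is again an isomorphism and $W^{-1}\delta W\in\Gamma_0(M)$. By Proposition \ref{prop:inv_rekkyo} this fails exactly for $M=25$ and $s\in\{1/5,2/5,3/5,4/5\}$: there $W$ normalizes only $\Gamma_0^{(5)}(25)$, the conjugate $W^{-1}\delta W$ has its $(1,2)$-entry merely in $5^{-1}\Z$, and $t\circ W$ is a priori a function on the larger curve $X_0^{(5)}(25)$. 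The paper must argue separately that the diagonal entries of $\gamma^{-1}W^{-1}\delta W$ are still integers (using $l\in 5^{-1}\Z$), and that the order $n$ of $t\circ W(\tau)-t(s)$ is still $1$ because the ramification index of the projection $X_0^{(5)}(25)\to X_0(25)$ at $i\infty$ equals $[\{\pm I\}\Gamma_0(25)_{i\infty}:\{\pm I\}\Gamma_0^{(5)}(25)_{i\infty}]=1$. Your proposal would silently assert $n=1$ in all cases and is therefore incomplete precisely in the situation for which the generalized (non-normalizing) Atkin--Lehner involutions were introduced.
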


\begin{proof}
	Put 
	\[	
	W^{-1} AW'
	= \sqrt{\frac{D}{D'}} \begin{pmatrix} k & * \\ -Ml & * \end{pmatrix}
	\]
	with rational numbers $ k, l $.
	Then we have
	\[
	k = \frac{v}{D} \left( m'a + Mb \right) - \frac{M}{D} u d \in \Z, \quad  
	l = \frac{m'a - md}{D} + \frac{M}{D} b \in \frac{1}{D} \Z.
	\]
	There exists a matrix
	\[
	\gamma = \pmat{ k & * \\ -Ml & * } \in \SL_2(\Z).
	\]
	Let $ g = (k, Ml) $.
	We have
	\[
	W^{-1} AW'
	= \sqrt{\frac{D}{D'}} \gamma 
	\begin{pmatrix} g & * \\ 0 & ad/g \end{pmatrix}.
	\]
	
	We show $ g = (d, m'a + Mb) $. 
	Since $ k - vl = d $ by direct calculation, we have $ g = (k, Md, Ml) $.
	Since $ k \equiv vm'a/D \bmod (M, m) $, we have $ (k, M) = 1 $.
	Thus we have 
	\[
	g = (k, d, Ml) = (k, d, Dl) = (k, d, m'a + Mb) = (d, m'a + Mb).
	\]
	
	By assumption there exists a matrix $ \delta \in \Gamma_0(M) $ such that
	$ \delta(s) = A(s') $.
	Since
	\[
	\delta W(i\infty) = \delta(s) = A(s') = AW'(i\infty)
	= W \gamma 
	\begin{pmatrix} g & * \\ 0 & ad/g \end{pmatrix} (i\infty)
	= W \gamma (i\infty),
	\]
	we have $ \gamma^{-1} W^{-1} \delta W \in \SL_2(\Q)_{i\infty} $.
	
	We show that the diagonal elements of $ \gamma^{-1} W^{-1} \delta W $ are integers. 
	If $ W \in N_{\SL_2(\R)}(\Gamma_0(M)) $, then $ W^{-1} \delta W \in \Gamma_0(M) $ and thus $ \gamma^{-1} W^{-1} \delta W \in \SL_2(\Z) $.	
	If $ W \not\in N_{\SL_2(\R)}(\Gamma_0(M)) $, then $ M = 25 $ and $ W \in N_{\SL_2(\R)}(\Gamma_0^{(5)}(25)) $ by Proposition \ref{prop:inv_rekkyo}.	
	By Definition \ref{def:other_aut} and  (\ref{eq:conjugate}), a matrix $ W^{-1} \delta W $ has integral element except for the $ (1, 2) $ entry and its $ (1, 2) $ entry is in $ 5^{-1} \Z $.
	Since $ l \in 5^{-1} \Z $, the diagonal elements of $ \gamma^{-1} W^{-1} \delta W $ are integers. 
	
	Thus we can express as
	\[
	\gamma^{-1} W^{-1} \delta W 
	= \pm \begin{pmatrix} 1 & * \\ 0 & 1 \end{pmatrix}.
	\]
	Therefore we have
	\[
	t \circ AW'
	= t \circ W \gamma 
	\begin{pmatrix} g & * \\ 0 & ad/g \end{pmatrix}
	= t \circ \delta W 
	\begin{pmatrix} \pm 1 & * \\ 0 & \pm 1 \end{pmatrix}
	\begin{pmatrix} g & * \\ 0 & ad/g \end{pmatrix}
	= t \circ W \begin{pmatrix} g & * \\ 0 & ad/g \end{pmatrix}.
	\]
	Let $ n $ be the order of $ t \circ W (\tau) - t(s) $ with respect to $ q $.
	Then the order of $ t \circ AW' (\tau) - t(s) $ with respect to $ q $ is $ ng^2/ad $. 
	We need to show $ n=1 $.
	
	If $ W \in N_{\SL_2(\R)}(\Gamma_0(M)) $, then by Proposition \ref{prop:inv_rekkyo} and thus $ t \circ W \colon X_0(M) \to \bbP^1(\C) $ is an isomorphism. 
	Therefore $ n=1 $. 
	
	If $ W \not\in N_{\SL_2(\R)}(\Gamma_0(M)) $, then $ M = 25 $ and $ W \in N_{\SL_2(\R)}(\Gamma_0^{(5)}(25)) $ by Proposition \ref{prop:inv_rekkyo}.
	The map $ t \circ W \colon X_0^{(5)}(25) \to \bbP^1(\C) $ is the composition of the isomorphism $ W \colon X_0^{(5)}(25) \to X_0^{(5)}(25) $, natural projection $ X_0^{(5)}(25) \to X_0(25) $ and the isomorphism 
	$ t \colon X_0(25) \to \bbP^1(\C) $.
	The ramification index at $ i\infty $ the natural projection $ X_0^{(5)}(25) \to X_0(25) $ is
	\[
	[ \{ \pm I \} \Gamma_0(25)_{i\infty} : \{ \pm I \} \Gamma_0^{(5)}(25)_{i\infty} ]
	= 1
	\]
	by \cite[Section 3.1]{DS} and thus we also have $ n=1 $ in this case. 
	This completes the proof. 
\end{proof}

Secondly, we consider the condition that a pair of cusps is a point on the modular correspondence.

\begin{definition}
	Let $ s, s' \in X_0(M) $ be cusps and
	\[
	W = \frac{1}{\sqrt{D}} 
	\begin{pmatrix} m & u \\ M & v \end{pmatrix},
	W' = \frac{1}{\sqrt{D'}} 
	\begin{pmatrix} m' & u' \\ M & v' \end{pmatrix}
	\in \SL_2(\R)
	\]
	be generalized Atkin-Lehner involutions such that 
	$ s = W(i \infty), s' = W'(i \infty) $.
	Let $ N $ be a positive integer coprime to $ M $. 
	We define 
	$ \delta_{s, s'}(N) := 1 $ if $ (M, m) = (M, m') $ and there exists an integer $ g $ such that $  m'N \equiv mg^2 \bmod M $.
	Otherwise we define $ \delta_{s, s'}(N) := 0 $.
\end{definition}

\begin{remark}
	In the case when $ M \neq 25 $, we have $ \delta_{s, s'}(N) = 1 $ if and only if $ D = D' $ and $ m \equiv N m' \bmod (M, m^2) $ since $ (M, m^2) \mid 12 $ and $ \bar{1} $ is the unique square element of $ (\Z/12\Z)^\times $.
	In the case when $ M = 25 $, we have $ \delta_{s, s'}(N) = 1 $ if and only if $ D = D' $ and $ s \equiv \pm N s' \bmod \Z $ by Table \ref{tab:other_involution}.
	Thus $ \delta_{s, s'}(N) = 1 $ if and only if
	$ s = s' \in C''_0(M) $,
	\begin{alignat*}{3}
		&M=9, \quad &  &N \equiv 1 \bmod 3, \quad &  &s = s' \in \left\{ \frac{1}{3}, \frac{2}{3} \right\}, \\
		&M=9, \quad &  &N \equiv 1 \bmod 3, \quad &  &\left(s, s'\right) \in \left\{ \left(\frac{1}{3}, \frac{2}{3}\right), \left(\frac{2}{3}, \frac{1}{3}\right) \right\}, \\
		&M=16, \quad & &  & &s = s' \in \left\{ \frac{1}{2}, \frac{1}{8} \right\}, \\
		&M=16, \quad   & &N \equiv 1 \bmod 4, \quad   & &s = s' \in \left\{ \frac{1}{2}, \frac{1}{4}, \frac{3}{4}, \frac{1}{8} \right\}, \\
		&M=16, \quad   & &N \equiv -1 \bmod 4, \quad   & &\left(s, s'\right) \in \left\{ \left(\frac{1}{4}, \frac{3}{4}\right), \left(\frac{3}{4}, \frac{1}{4}\right) \right\}, \\
		&M=18, \quad   & &N \equiv 1 \bmod 6, \quad   & &s = s' \in \left\{ \frac{1}{3}, \frac{2}{3}, \frac{1}{6}, \frac{5}{6} \right\}, \\
		&M=18, \quad & &N \equiv -1 \bmod 6, \quad   
		& &\left(s, s'\right) \in \left\{ \left(\frac{1}{3}, \frac{2}{3}\right), \left(\frac{2}{3}, \frac{1}{3}\right),  \left(\frac{1}{6}, \frac{5}{6}\right), \left(\frac{5}{6}, \frac{1}{6}\right) \right\}, \\
		&M=25, \quad & &s, s' \in \left\{ \frac{1}{5}, \frac{2}{5}, \frac{3}{5}, \frac{4}{5} \right\}, \quad & &s \equiv \pm N s' \bmod \Z 
	\end{alignat*}
	by Table \ref{tab:other_involution}.
\end{remark}

\begin{theorem} \label{thm:main_cusp_intersection}
	For a positive integer $ N $ coprime to $ M $, 
	the modular correspondence $ T_{N}^{\Gamma_0(M)} \subset X_0(M) \times X_0(M) $ satisfies that
	\[
	T_{N}^{\Gamma_0(M)} \subset Y_0(M)^2 \cup
	\{ (s, s') \mid \delta_{s, s'}(N) \neq 0 \}.
	\]	
	In particular, if $ M \notin \{ 9, 16, 18, 25 \} $ then a pair of cusps on $ T_{N}^{\Gamma_0(M)} $ is a form $ (s, s) $. 
\end{theorem}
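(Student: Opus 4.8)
The plan is to translate membership of a cusp pair in $T_N^{\Gamma_0(M)}$ into the numerical condition defining $\delta_{s,s'}(N)$, using Proposition \ref{prop:s=A(s')} as the bridge, and then to read off the ``in particular'' clause from Corollary \ref{cor:aut_rekkyo} together with the Remark classifying $\delta$. First I would observe that every matrix $A$ occurring in (\ref{eq:def_of_T}) lies in $G_0(M) \cap M_2(\Z)$ and maps $\Q \cup \{i\infty\}$ into itself; hence if one coordinate of a point of $T_N^{\Gamma_0(M)}$ is a cusp then so is the other, and the complement of $Y_0(M)^2$ consists exactly of cusp pairs. Writing $s = m/M$ and $s' = m'/M$ for the standard representatives, a cusp pair $(s,s')$ lies on $T_N^{\Gamma_0(M)}$ precisely when there is a cusp $\sigma$ with $\Gamma_0(M)\sigma = s$ and $\Gamma_0(M)A\sigma = s'$ for some $A$ in (\ref{eq:def_of_T}). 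Writing $\sigma = \gamma(m/M)$ with $\gamma \in \Gamma_0(M)$ and setting $C := A\gamma \in G_0(M)\cap M_2(\Z)$ of determinant $N$, this says $\Gamma_0(M)(m'/M) = \Gamma_0(M)C(m/M)$.

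Next I would upper-triangularize $C$. By Proposition \ref{prop:str_G_0(M)}, $C = \gamma' B$ with $\gamma' \in \Gamma_0(M)$ and $B \in G_0(M)_{i\infty}$ upper triangular of determinant $N$, so that $\Gamma_0(M)C(m/M) = \Gamma_0(M)B(m/M)$. The entries of $B$ lie only in $\Z_{(M)}$, so I would multiply by a positive integer $\lambda$ coprime to $M$ clearing denominators, obtaining $A = \lambda B = \smat{a & b \\ 0 & d} \in G_0(M)\cap M_2(\Z)$ with $ad = \lambda^2 N$ and $\Gamma_0(M)(m'/M) = \Gamma_0(M)A(m/M)$. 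Proposition \ref{prop:s=A(s')} then yields $(M,m) = (M,m')$ and $m\,\lambda^2 N \equiv m'(d, ma + Mb)^2 \bmod M$; since $(\lambda, M)=1$ the factor $\lambda^2$ is absorbed into the square, giving an integer $g$ with $mN \equiv m'g^2 \bmod M$. This is the defining condition of $\delta_{s',s}(N) = 1$. Because $\delta$ is symmetric in its two cusp arguments when $(M,m)=(M,m')$ and $(N,M)=1$ (one passes between $m'N \equiv mg^2$ and $mN \equiv m'h^2$ modulo $M/(M,m)$ by taking $h \equiv Ng^{-1}$, using that $g$ is then a unit), we conclude $\delta_{s,s'}(N) \neq 0$, which proves the asserted inclusion.

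Finally, for the ``in particular'' clause I would invoke Corollary \ref{cor:aut_rekkyo}, by which $C_0(M) = C''_0(M)$ whenever $M \notin \{9,16,18,25\}$, so that every cusp lies in $C''_0(M)$. For such $M$ none of the exceptional rows in the Remark following the definition of $\delta_{s,s'}(N)$ can occur, and that Remark then gives $\delta_{s,s'}(N) = 1$ if and only if $s = s'$. Combined with the inclusion just established, any cusp pair on $T_N^{\Gamma_0(M)}$ is forced to be diagonal.

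I expect the main obstacle to be the bookkeeping in the middle step: passing from the arbitrary integer matrix $C$ representing the correspondence to an honest upper-triangular integer matrix to which Proposition \ref{prop:s=A(s')} applies, while tracking the determinant. The delicate point is that clearing denominators inflates $\det$ from $N$ to $\lambda^2 N$, and one must verify that this extra square factor, together with the interchange of the roles of $m$ and $m'$ forced by the orientation of the correspondence, is harmless; this rests on $\lambda$ being a unit modulo $M$ and on the symmetry of $\delta_{s,s'}(N)$, and cleanly checking that symmetry is where I would be most careful.
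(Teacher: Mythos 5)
Your proof is correct and follows essentially the same route as the paper: both reduce the cusp-pair condition on $T_{N}^{\Gamma_0(M)}$ to the congruence criterion of Proposition \ref{prop:s=A(s')} and then deduce the diagonal statement for $M \notin \{9,16,18,25\}$ from Corollary \ref{cor:aut_rekkyo} together with the remark classifying when $\delta_{s,s'}(N)=1$. The only difference is that you make explicit the passage from an arbitrary representative of the cusp to the standard one $m/M$ (via Proposition \ref{prop:str_G_0(M)}, the denominator-clearing factor $\lambda$, and the symmetry of $\delta_{s,s'}(N)$), whereas the paper applies Proposition \ref{prop:s=A(s')} directly to the upper-triangular matrix $A$ of (\ref{eq:def_of_T}); your extra bookkeeping is sound and, if anything, tightens that step.
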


\begin{proof}
	By  (\ref{eq:def_of_T}), if $ (\tau, \tau^\prime) \in X_0(M) \times X_0(M) $ is a point of $ T_{N}^{\Gamma_0(M)} $, then there exist integers $ a, b $, and $ d $ such that $ a d = N, 0 \le b < d $ and
	$ \Gamma_0(M) \tau = \Gamma_0(M) \frac{a \tau^\prime + b}{d} $. 
	Thus $ (\tau, \tau^\prime) $ is a point on $ Y_0(M) \times Y_0(M) $ or a pair of two cusps $ (s, s^\prime) $.
	
	Suppose that $ (\tau, \tau^\prime) = (s, s^\prime) $ is a pair of two cusps.
	Since $ N $ is coprime to $ M $, we have $ A := \smat{a & b \\ 0 & d} \in G_0(M) $. 
	By Proposition \ref{prop:s=A(s')}, we have
	$ (M, m) = (M, m') $ and $  m'N \equiv m(d, m'a + Mb)^2 \bmod M $
	and thus $ \delta_{s, s'}(N) = 1 $. 
\end{proof}

Finally, we calculate the intersection multiplicity at cusps by Proposition \ref{prop:order_t_W}. 

\begin{proposition} \label{prop:cusp_int_mult}
	Let $ N_1, N_2 $ be positive integers coprime to $ M $.
	Suppose that $N_1 N_2$ is not a square. 
	Then for two cusps $ s, s' $ in $ X_0(M) $, we have
	\[
	(T_{N_1}^{\Gamma_0(M)} \cdot T_{N_2}^{\Gamma_0(M)})_{(s, s')}
	= \delta_{s, s'}(N_1) \delta_{s, s'}(N_2) 
	\sum_{a_1 d_1 = N_1, a_2 d_2 = N_2}  
	\min \left\{ a_1 d_2, a_2 d_1 \right\}
	\]
	unless $ M = 25 $ and $ s, s' \in \{ 1/5, 2/5, 3/5, 4/5 \} $.
	If $ M = 25 $ and $ s, s' \in \{ 1/5, 2/5, 3/5, 4/5 \} $, then we have
	\[
	(T_{N_1}^{\Gamma_0(M)} \cdot T_{N_2}^{\Gamma_0(M)})_{(s, s')}
	= \delta_{s, s'}(N_1) \delta_{s, s'}(N_2) 
	\sum_{\substack{
		a_1 d_1 = N_1, a_2 d_2 = N_2, \\
		a_1 s \equiv d_1 s', a_2 s \equiv d_2 s' \bmod \Z
		}}  
	\min \left\{ a_1 d_2, a_2 d_1 \right\}.
	\]
\end{proposition}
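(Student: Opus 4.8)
The plan is to transport the computation to $ \bbP^1(\C) \times \bbP^1(\C) $ through the isomorphism $ t \times t $ of Theorem \ref{thm:main_mod_poly}, so that $ (T_{N_1}^{\Gamma_0(M)} \cdot T_{N_2}^{\Gamma_0(M)})_{(s, s')} $ becomes the local intersection multiplicity of the plane curves $ \{ \Phi_{N_1}^{\Gamma_0(M)} = 0 \} $ and $ \{ \Phi_{N_2}^{\Gamma_0(M)} = 0 \} $ at $ (t(s), t(s')) $, which is unchanged under the isomorphism. First I would decompose each $ T_{N_i}^{\Gamma_0(M)} $ into its local analytic branches at $ (s, s') $, one family for each matrix $ A = \smat{a & b \\ 0 & d} $ with $ ad = N_i $, $ 0 \le b < d $ from \eqref{eq:def_of_T}. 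By Theorem \ref{thm:main_cusp_intersection}, such a branch passes through $ (s, s') $ only if $ \delta_{s, s'}(N_i) = 1 $, and conversely when $ \delta_{s, s'}(N_i) = 1 $ the congruence of Proposition \ref{prop:s=A(s')} is solvable and at least one branch survives. Hence if $ \delta_{s, s'}(N_1) \delta_{s, s'}(N_2) = 0 $ one of the curves misses $ (s, s') $ entirely and the multiplicity is $ 0 $, which already accounts for the prefactor $ \delta_{s, s'}(N_1) \delta_{s, s'}(N_2) $.

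Next I would read off the local shape of each surviving branch from Proposition \ref{prop:order_t_W}. Using the generalized Atkin-Lehner involutions $ W = W_m^M $ and $ W' = W_{m'}^M $ as local coordinates at the two cusps, the branch attached to $ A $ is parametrized by $ q = e^{2 \pi \iu \tau} $ so that one of the two coordinates has order $ 1 $ (this is the statement $ n = 1 $ established there) while the other has order $ e_A := (d, m'a + Mb)^2/(ad) = g_A^2/N_i $, where $ g_A $ is the greatest common divisor furnished by Proposition \ref{prop:order_t_W} (with the roles of $ s $ and $ s' $ interchanged as needed). Writing $ e_A = m_A/n_A $ in lowest terms and substituting $ q = u^{n_A} $ yields a primitive parametrization $ (u^{n_A}, \gamma u^{m_A} + \cdots) $ with $ \gcd(m_A, n_A) = 1 $ and $ \gamma $ a root of unity reflecting the dependence on $ b $. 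Grouping the admissible residues $ b $ for a fixed factorization $ (a, d) $ into their Galois-conjugate families identifies the irreducible geometric branches, each appearing with multiplicity one in the reduced cycle $ T_{N_i}^{\Gamma_0(M)} $.

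The heart of the argument is the local intersection number of a single branch $ \beta_1 $, from a datum $ (a_1, d_1, b_1) $, against a single branch $ \beta_2 $, from $ (a_2, d_2, b_2) $. For two branches with reduced exponents $ (n_1, m_1) $ and $ (n_2, m_2) $ the local intersection number equals $ \min\{ n_1 m_2, n_2 m_1 \} $ as soon as their slopes $ e_{A_1} $ and $ e_{A_2} $ differ, since then the leading terms have distinct orders and no cancellation occurs. The hypothesis that $ N_1 N_2 $ is not a square is used exactly here: an equality $ e_{A_1} = e_{A_2} $ reads $ g_1^2/N_1 = g_2^2/N_2 $, whence $ N_1 N_2 = ( N_1 g_2 / g_1 )^2 $ would be a perfect square. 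Thus all cross-branch slopes are distinct, every branch pair contributes its leading-term minimum, and no tangency corrections arise.

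It then remains to sum $ \min\{ n_1 m_2, n_2 m_1 \} $ over all admissible data, and I expect this combinatorial summation to be the main obstacle. For fixed factorizations $ (a_1, d_1) $ and $ (a_2, d_2) $ one must run over the residues $ b_1, b_2 $ satisfying the congruence of Theorem \ref{thm:main_cusp_intersection}, track how the divisors $ g_i = (d_i, m'a_i + Mb_i) $ — and hence the reduced exponents and the number of coincident sheets — vary with $ b_i $, and verify that after collecting the distinct geometric branches the total collapses to $ \min\{ a_1 d_2, a_2 d_1 \} $; summation over all factorizations then gives the closed formula. Finally, the case $ M = 25 $ with $ s, s' \in \{ 1/5, 2/5, 3/5, 4/5 \} $ needs separate care: by Proposition \ref{prop:inv_rekkyo} the involution $ W_m^M $ normalizes only $ \Gamma_0^{(5)}(25) $, so the order input of Proposition \ref{prop:order_t_W} remains valid because the projection $ X_0^{(5)}(25) \to X_0(25) $ is unramified at these cusps, but $ \Gamma_0^{(5)}(25) $-equivalence separates cusps more finely than $ \Gamma_0(25) $-equivalence, replacing the single congruence by the pair $ a_i s \equiv d_i s' \bmod \Z $ and thereby restricting the summation to the range in the second displayed formula.
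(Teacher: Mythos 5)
Your overall strategy is the paper's: transport to $\bbP^1 \times \bbP^1$ via $t \times t$, decompose $T_{N_i}^{\Gamma_0(M)}$ at $(s,s')$ into pieces indexed by the matrices $A = \smat{a & b \\ 0 & d}$, read off the local orders from Proposition \ref{prop:order_t_W}, and use the non-square hypothesis to see that the two orders $g_1^2/N_1$ and $g_2^2/N_2$ never coincide, so each pairwise contribution is the minimum of the two orders with no cancellation. Making that last point explicit is a genuine plus over the paper, which leaves it implicit. However, the proof is not complete: the step you yourself flag as ``the main obstacle'' --- collapsing the sum over all data $(a_i, b_i, d_i)$ of the branch-by-branch minima into $\sum_{a_1 d_1 = N_1,\, a_2 d_2 = N_2} \min\{a_1 d_2, a_2 d_1\}$ --- is the actual content of the proposition, and you do not carry it out. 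The missing ingredient is the count
\[
\# \left\{ A = \pmat{a & b \\ 0 & d} \relmiddle| ad = N,\ 0 \le b < d,\ (d, m'a + Mb) = g \right\} = \frac{N}{g},
\]
which the paper obtains by translating $b \mapsto b$ by $m'a$ (a bijection on $\Z/d\Z$ reducing the gcd condition to $(b,d) = g$) and then applying $\sum_{g \mid d \mid N} \varphi(d/g) = \sum_{e \mid N/g} \varphi(e) = N/g$. With this in hand, the weighted sum $\sum_{g_1, g_2} (N_1/g_1)(N_2/g_2) \min\{g_1^2/N_1, g_2^2/N_2\}$ becomes $\sum \min\{g_1 h_2, g_2 h_1\}$ with $h_i := N_i/g_i$, which is the stated formula. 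Your alternative bookkeeping via primitive Puiseux parametrizations and Galois-conjugate families would have to reproduce exactly this count, and nothing in the proposal establishes it.

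A secondary inaccuracy: in the $M = 25$ case the restriction to pairs with $a_i s \equiv d_i s' \bmod \Z$ does not come from $\Gamma_0^{(5)}(25)$-equivalence being finer than $\Gamma_0(25)$-equivalence (the role of $\Gamma_0^{(5)}(25)$ in Proposition \ref{prop:order_t_W} is only to justify $n = 1$ there). It comes from Proposition \ref{prop:s=A(s')}: the condition $\Gamma_0(M) s = \Gamma_0(M) A(s')$ reads $m' N \equiv m g^2 \bmod M$ with $g = (d, m'a + Mb)$, and for $M = 25$ and $s, s' \in \{1/5, 2/5, 3/5, 4/5\}$ this congruence genuinely constrains $g$ (equivalently $g s \equiv h s' \bmod \Z$), whereas for all other $M$ in the list it is automatic once $\delta_{s,s'}(N) = 1$ because $(M, m^2) \mid 12$ and $\bar{1}$ is the only square in $(\Z/12\Z)^\times$. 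So the extra congruence restricts which matrices $A$ contribute at all, not how finely the cusps are separated.
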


\begin{proof}
	Let $ s = m/M, s' = m' / M' $ with integers $ 0 \le m, m' < M $ and  
	$ W := W_m^M, W' := W_{m'}^{M} \in \SL_2(\R) $ be generalized Atkin-Lehner involutions.	
	Let $ \Phi_{N_i}^{\Gamma_0(M)}(X, Y) \in \Z[X, Y] $ be the modular polynomial whose existence is guaranteed by Theorem \ref{thm:main_mod_poly}.
	Then the intersection multiplicity at $ (s, s') $ is
	\begin{align*}
		&( T_{N_1}^{\Gamma_0(M)} \cdot T_{N_2}^{\Gamma_0(M)})_{(s, s')} \\
		= &( T_{N_1}^{\Gamma_0(M)} \cdot T_{N_2}^{\Gamma_0(M)})
		_{(W(\infty), W'(\infty))} \\
		= &\dim_\C \C\left[\left[q, q'\right]\right] / 
		( \Phi_{N_i}^{\Gamma_0(M)}(t \circ W (\tau), t \circ W'(\tau'))
		\mid i = 1, 2 ) \\
		=&\frac{1}{N_1 N_2} 
		\sum_{
			\substack{
				A_i \in I^{\Gamma_0(M)}_{N_i, \mathrm{mat}}, \\
				\Gamma_0(M) s = \Gamma_0(M) A_i (s'), i=1, 2 
			}}  
		\dim_\C \C\left[\left[ q, q'_{N_1 N_2} \right]\right] \bigg/ 
		\left( t \circ W (\tau), t \circ A_i W'(\tau'))
		\mid i = 1, 2 \right).	
	\end{align*}
	By Proposition \ref{prop:order_t_W} in the case when $ A $ is the identity matrix, the order of
	$ t \circ W (\tau) - t(s) $ with respect to $ q  $ is $ 1 $. 
	Thus we have
	\[
	\C\left[\left[ q, q'_{N_1 N_2} \right]\right] 
	= \C\left[\left[ t \circ W (\tau), q'_{N_1 N_2} \right]\right].
	\]
	Hence the intersection multiplicity is
	\begin{align*}
	&( T_{N_1}^{\Gamma_0(M)} \cdot T_{N_2}^{\Gamma_0(M)})_{(s, s')} \\
	=&\frac{1}{N_1 N_2} 
	\sum_{
		\substack{
			A_i \in I^{\Gamma_0(M)}_{N_i, \mathrm{mat}}, \\
			\Gamma_0(M) s = \Gamma_0(M) A_i (s'), i=1, 2 
	}}  
	\dim_\C \C\left[\left[ q'_{N_1 N_2} \right]\right] \bigg/ 
	\left( t \circ A_1 W'(\tau') - t \circ A_2 W'(\tau') \right) \\
	= &\sum_{ 
		\substack{
			A_i \in I^{\Gamma_0(M)}_{N_i, \mathrm{mat}}, \\
			\Gamma_0(M) s = \Gamma_0(M) A_i (s'), i=1, 2 
	}}  
	(\text{the order of $ t(A_1 W'(\tau')) - t(A_2 W'(\tau')) $ with respect to $ q' $}).
	\end{align*}
	
	By Proposition \ref{prop:order_t_W}, this is equal to
	\[
	\sum_{ 
		\substack{
			A_i = \smat{a_i & b_i \\ 0 & d_i} 
			\in I^{\Gamma_0(M)}_{N_i, \mathrm{mat}}, \\
			\Gamma_0(M) s = \Gamma_0(M) A_i (s'), i=1, 2 
		}}
	\min_{i=1, 2} \left\{ \frac{(d_i, m' a_i + M b_i)^2}{N_i} \right\}.
	\]
	For a positive integer $ N $ coprime to $ M $ and its positive divisor $ g $,
	set
	\[
	A(N, g) :=
	\# \left\{ A = \pmat{a & b \\ 0 & d} \in I^{\Gamma_0(M)}_{N, \mathrm{mat}}
	\relmiddle{|} g = (d, m' a + M b) \right\}.
	\]
	By Proposition \ref{prop:s=A(s')}, the intersection multiplicity is
	\[
	\delta_{s, s'}(N_1) \delta_{s, s'}(N_2)
	\sum_{\substack{
			g_1 \mid N_1, g_2 \mid N_2, \\
			m' N_1 \equiv mg_1^2, m' N_2 \equiv mg_2^2 \bmod M
		}}
	A(N_1, g_1) A(N_2, g_2)
	\min \left\{ \frac{g_1^2}{N_1}, \frac{g_2^2}{N_2} \right\}.
	\]	
	Here we have
	\[
	A(N, g) 
	= \# \left\{ (a, \bar{b}, d) \relmiddle{|} 
	ad = N, \bar{b} \in \Z/d\Z, g = (d, m' a + M b) \right\}.
	\]
	Since the plus $ m' a $ map $ \Z/d\Z \to \Z/d\Z $ induces the bijection between
	\[
	\left\{ \bar{b} \in \Z/d\Z \relmiddle{|} g = (d, M b) = (b, d) \right\}
	\]
	and
	\[
	\left\{ \bar{b} \in \Z/d\Z \relmiddle{|} g = (d, m' a + M b) \right\},
	\]
	we have
	\begin{align*}
		A(N, g) 
		&= \sum_{g \mid d \mid N} \# \{ \bar{b} \in \Z/d\Z \mid g = (b, d) \}
		= \sum_{g \mid d \mid N} \# \left( \Z / (d/g) \Z \right)^\times \\
		&= \sum_{e \mid N/g} \# \left( \Z / e \Z \right)^\times
		= \frac{N}{g}.	
	\end{align*}
	Thus the intersection multiplicity is
	\begin{align*}
		&\delta_{s, s'}(N_1) \delta_{s, s'}(N_2)
		\sum_{\substack{
				g_1 \mid N_1, g_2 \mid N_2, \\
				m' N_1 \equiv mg_1^2, m' N_2 \equiv mg_2^2 \bmod M
			}}
		\frac{N_1}{g_1} \frac{N_2}{g_2}
		\min \left\{ \frac{g_1^2}{N_1}, \frac{g_2^2}{N_2} \right\} \\
		=
		&\delta_{s, s'}(N_1) \delta_{s, s'}(N_2) 
		\sum_{\substack{
				g_1 h_1 = N_1, g_2 h_2 = N_2, \\
				g_1 s \equiv h_1 s', g_2 s \equiv h_2 s' \bmod \Z
			}}  
		\min \left\{ g_1 h_2, g_2 h_1 \right\}.
	\end{align*}
	
	Unless $ M = 25 $ and $ s, s' \in \{ 1/5, 2/5, 3/5, 4/5 \} $, the condition
	$ g_1 s \equiv h_1 s', g_2 s \equiv h_2 s' \bmod \Z $ holds for any
	$ g_1, h_1, g_2 $ and $ h_2 $.	
\end{proof}

% --------------------------------------------------------------------------

\section{The class number formulas} \label{sec:class_num_formula}

% --------------------------------------------------------------------------

In this section, let $ N_1 $ and $ N_2 $ be positive integers coprime to $ M $ and suppose $N_1 N_2$ is not a square. 

The intersection number of modular correspondences on $ X_0(M) \times X_0(M) $ is calculated as follows. 

\begin{lemma} \label{lem:global_int_num}
	We have
	\[
	(T_{N_1}^{\Gamma_0(M)} \cdot T_{N_2}^{\Gamma_0(M)})
	_{X_0(M) \times X_0(M)}
	= 2 \sigma(N_1) \sigma(N_2)
	= \sum_{a_1 d_1 = N_1, a_2 d_2 = N_2}  ( a_1 d_2 + a_2 d_1 ).
	\]
\end{lemma}

\begin{proof}
	Since $ X_0(M) $ has genus zero, $ X_0(M) \times X_0(M) $ is isomorphic to $ \bbP^1 \times \bbP^1 $. 
	The intersection number of divisors on the algebraic surface $ \bbP^1 \times \bbP^1 $ only depends on its degrees. 
	The degree of the algebraic cycle $ T_{N_i}^{\Gamma_0(M)} $ is 
	$ [\mathrm{SL}_2(\Z):\Gamma_0(N_i)] $, which is the same value in the case when $ M=1 $ which is treated in \cite{Ling}. 
	Thus the intersection number on $ \bbP^1 \times \bbP^1 $ is
	\[
	(T_{N_1}^{\Gamma_0(M)} \cdot T_{N_2}^{\Gamma_0(M)})_{\bbP^1 \times \bbP^1}
	= 2 \sigma_1(N_1) \sigma_1(N_2)
	\]
	by \cite[Lemma 3.1]{Ling}. 
	The last equality follows from the definition of the divisor function $ \sigma(N) $.
\end{proof}

By combining Proposition \ref{prop:cusp_int_mult} and Lemma \ref{lem:global_int_num}, we can calculate the intersection number of $ T_{N_1}^{\Gamma_0(M)} $ and $ T_{N_2}^{\Gamma_0(M)} $ on $ X_0(M) \times X_0(M) $ as follows. 

\begin{theorem} \label{thm:main_cusp}
	Unless $ M = 25 $ and $ N_1 \equiv \pm N_2 \bmod 5 $, we have
	\[
	(T_{N_1}^{\Gamma_0(M)} \cdot T_{N_2}^{\Gamma_0(M)})
	_{Y_0(M) \times Y_0(M)}
	= 2 \sum_{a_1 d_1 = N_1, a_2 d_2 = N_2, a_1 d_2 > a_2 d_1} 
	( a_1 d_2 - \delta_M(N_1, N_2) a_2 d_1 )
	\]
	where
	\begin{align*}
	\delta_M(N_1, N_2) &:= 
	-1 + \sum_{ (s, s') \in C_0(M) } 
	\delta_{s, s'}(N_1) \delta_{s, s'}(N_2) \\
	&=
	\begin{cases}
	0 & \text{if } M = 1, \\
	1 & \text{if } M = 2, 3, 5, 7, 13, \\
	2 & \text{if } M = 4, \\
	3 & \text{if } M = 6, 8, 10, \\
	3 & \text{if } M = 9, N_1 \equiv N_2 \bmod 3, \\
	1 & \text{if } M = 9, N_1 \equiv - N_2 \bmod 3, \\
	5 & \text{if } M = 12, \\
	5 & \text{if } M = 16, N_1 \equiv N_2 \bmod 4, \\
	3 & \text{if } M = 16, N_1 \not\equiv N_2 \bmod 4, \\
	7 & \text{if } M = 18, N_1 \equiv N_2 \equiv 1 \bmod 6, \\
	5 & \text{if } M = 18, N_1 \equiv N_2 \equiv -1 \bmod 6, \\
	3 & \text{if } M = 18, N_1 \not\equiv N_2 \bmod 6, \\
	1 & \text{if } M = 25, N_1 \not\equiv \pm N_2 \bmod 5. \\
	\end{cases}
\end{align*}
	
	If $ M=25 $ and $ N_1 \equiv \pm N_2 \bmod 5 $, then
	\begin{align*}
	&(T_{N_1}^{\Gamma_0(25)} \cdot T_{N_2}^{\Gamma_0(25)})
	_{Y_0(25) \times Y_0(25)} \\
	= &\sum_{a_1 d_1 = N_1, a_2 d_2 = N_2} 
	|a_1 d_2 - a_2 d_1| 
	- 4 \sum_{\substack{
				a_1 d_1 = N_1, a_2 d_2 = N_2, \\
				a_1 d_2 \equiv a_2 d_1 \bmod 5
			}} 
	\min \left\{ a_1 d_2, a_2 d_1 \right\}.
	\end{align*}
	In particular, if $ M $ is a prime number $ p $, that is,
	$ M = p = 2, 3, 5, 7, 13 $, we have
	\[
	(T_{N_1}^{\Gamma_0(p)} \cdot T_{N_2}^{\Gamma_0(p)})
	_{Y_0(p) \times Y_0(p)}
	= \sum_{a_1 d_1 = N_1, a_2 d_2 = N_2} 
	|a_1 d_2 - a_2 d_1|.
	\]
\end{theorem}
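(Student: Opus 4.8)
The plan is to isolate the intersection number on $ Y_0(M) \times Y_0(M) $ by subtracting the cuspidal contributions from the global intersection number on $ X_0(M) \times X_0(M) $. Since $ N_1 N_2 $ is not a square, $ T_{N_1}^{\Gamma_0(M)} $ and $ T_{N_2}^{\Gamma_0(M)} $ meet properly, and by Theorem \ref{thm:main_cusp_intersection} every intersection point lying outside $ Y_0(M) \times Y_0(M) $ is a pair of cusps. Hence I would start from
\[
(T_{N_1}^{\Gamma_0(M)} \cdot T_{N_2}^{\Gamma_0(M)})_{Y_0(M)^2}
= (T_{N_1}^{\Gamma_0(M)} \cdot T_{N_2}^{\Gamma_0(M)})_{X_0(M)^2}
- \sum_{(s, s')} (T_{N_1}^{\Gamma_0(M)} \cdot T_{N_2}^{\Gamma_0(M)})_{(s, s')},
\]
the sum running over ordered pairs of cusps, and substitute Lemma \ref{lem:global_int_num} for the global term and Proposition \ref{prop:cusp_int_mult} for each cuspidal term.

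In the generic case (all admissible $ M $ except $ M = 25 $ with $ N_1 \equiv \pm N_2 \bmod 5 $), each cuspidal multiplicity is $ \delta_{s,s'}(N_1)\delta_{s,s'}(N_2) $ times the common sum $ \sum \min\{a_1 d_2, a_2 d_1\} $, so summing over cusp pairs factors out $ \sum_{(s,s')} \delta_{s,s'}(N_1)\delta_{s,s'}(N_2) = \delta_M(N_1,N_2) + 1 $. Writing $ u = a_1 d_2 $, $ v = a_2 d_1 $ and using $ u + v - (\delta_M+1)\min\{u,v\} = \max\{u,v\} - \delta_M \min\{u,v\} $, the computation reduces to
\[
\sum_{a_1 d_1 = N_1,\, a_2 d_2 = N_2} \bigl( \max\{u,v\} - \delta_M \min\{u,v\} \bigr)
= 2 \sum_{u > v} ( u - \delta_M v ).
\]
Here I would invoke the involution $ (a_1,d_1,a_2,d_2) \mapsto (d_1,a_1,d_2,a_2) $, which interchanges $ u $ and $ v $ and is fixed-point-free precisely because $ u = v $ would force $ N_1 N_2 = (a_1 d_2)^2 $ to be a square; folding the sum along it yields the stated $ 2\sum_{a_1 d_2 > a_2 d_1}(a_1 d_2 - \delta_M a_2 d_1) $.

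It then remains to evaluate $ \delta_M(N_1,N_2) $, which is the bulk of the work. Using Corollary \ref{cor:aut_rekkyo} to list the cusps of $ X_0(M) $ and sort them into $ C''_0(M) $ and its complement, together with the explicit description of $ \delta_{s,s'}(N) $ in the remark preceding Theorem \ref{thm:main_cusp_intersection}, I would count for each $ M $ the ordered cusp pairs $ (s,s') $ with $ \delta_{s,s'}(N_1)\delta_{s,s'}(N_2) = 1 $. For $ M \in \{2,3,5,7,13\} $ only the diagonal pairs $ (i\infty,i\infty),(0,0) $ survive, giving $ \delta_M = 1 $; for composite $ M $ outside $ \{9,16,18,25\} $ every cusp lies in $ C''_0(M) $ and contributes unconditionally on the diagonal. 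The delicate cases are $ M \in \{9,16,18,25\} $, where cusps outside $ C''_0(M) $ contribute only under congruence conditions on $ N_1,N_2 $ modulo $ 3,4,6,5 $ respectively—diagonal pairs becoming active when $ N_1 \equiv N_2 $ and certain off-diagonal pairs when $ N_1 \equiv -N_2 $—and this produces the full case list.

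Finally, the special case $ M = 25 $ with $ N_1 \equiv \pm N_2 \bmod 5 $ must be treated through the refined formula of Proposition \ref{prop:cusp_int_mult}: the cusps $ 1/5,2/5,3/5,4/5 $ now contribute a multiplicity restricted by the side conditions $ a_i s \equiv d_i s' \bmod \Z $. Combining these with the unconditional contributions of $ i\infty $ and $ 0 $ and applying the same involution gives the expression with the correction term $ -4\sum_{a_1 d_2 \equiv a_2 d_1 \bmod 5}\min\{a_1 d_2, a_2 d_1\} $. The closing statement for primes is then immediate from $ \delta_p = 1 $, since $ 2\sum_{u>v}(u-v) = \sum |u-v| $. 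The main obstacle is precisely the bookkeeping of $ \delta_M $ for $ M \in \{9,16,18,25\} $ and the congruence-restricted sum in the $ M=25 $ case, rather than any conceptual difficulty.
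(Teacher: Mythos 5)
Your proposal is correct and follows essentially the same route as the paper: decompose the intersection number as the global number on $\bbP^1\times\bbP^1$ (Lemma \ref{lem:global_int_num}) minus the cuspidal contributions (Theorem \ref{thm:main_cusp_intersection} and Proposition \ref{prop:cusp_int_mult}), factor out $1+\delta_M(N_1,N_2)$, and fold the sum via the swap $(a_i,d_i)\mapsto(d_i,a_i)$, with the $M=25$, $N_1\equiv\pm N_2\bmod 5$ case handled separately through the congruence-restricted form of Proposition \ref{prop:cusp_int_mult}. Your explicit justification that the folding involution is fixed-point-free because $N_1N_2$ is not a square is a detail the paper leaves implicit, but the argument is otherwise identical.
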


\begin{proof}
	By Theorem \ref{thm:main_cusp_intersection}, the intersection number is
	\begin{align*}
	&(T_{N_1}^{\Gamma_0(M)} \cdot T_{N_2}^{\Gamma_0(M)})_{Y_0(M)^2} \\
	= &(T_{N_1}^{\Gamma_0(M)} \cdot T_{N_2}^{\Gamma_0(M)})_{\bbP^1 \times \bbP^1}
	- \sum_{ (s, s') \in C_0(M)^2 } 
	(T_{N_1}^{\Gamma_0(M)} \cdot T_{N_2}^{\Gamma_0(M)})_{(s, s')}.
	\end{align*}
	Unless $ M = 25 $ and $ N_1 \equiv \pm N_2 \bmod 5 $, the intersection number is
	\begin{align*}
	&\sum_{a_1 d_1 = N_1, a_2 d_2 = N_2}  ( a_1 d_2 + a_2 d_1 )
	- (1 + \delta_M(N_1, N_2)) 
	\sum_{a_1 d_1 = N_1, a_2 d_2 = N_2}  \min \left\{ a_1 d_2, a_2 d_1 \right\} \\
	= &2 \sum_{a_1 d_1 = N_1, a_2 d_2 = N_2, a_1 d_2 > a_2 d_1} 
	( a_1 d_2 - \delta_M(N_1, N_2) a_2 d_1 )
	\end{align*}
	by Proposition \ref{prop:cusp_int_mult}. 
	If $ M = 25 $ and $ N_1 \equiv \pm N_2 \bmod 5 $, then the intersection number is
	\begin{align*}
	&(T_{N_1}^{\Gamma_0(25)} \cdot T_{N_2}^{\Gamma_0(25)})_{\bbP^1 \times \bbP^1}
	- (T_{N_1}^{\Gamma_0(25)} \cdot T_{N_2}^{\Gamma_0(25)})_{(i\infty, i\infty)}
	- (T_{N_1}^{\Gamma_0(25)} \cdot T_{N_2}^{\Gamma_0(25)})_{(0, 0)} \\
	&- \sum_{s, s' \in \{ 1/5, 2/5, 3/5, 4/5 \}} 
	(T_{N_1}^{\Gamma_0(25)} \cdot T_{N_2}^{\Gamma_0(25)})_{(s, s')} \\
	= &\sum_{a_1 d_1 = N_1, a_2 d_2 = N_2} 
	|a_1 d_2 - a_2 d_1| 
	- \sum_{s, s' \in \{ 1/5, 2/5, 3/5, 4/5 \}} 
	\sum_{\substack{
			a_1 d_1 = N_1, a_2 d_2 = N_2, \\
			a_1 s \equiv d_1 s', a_2 s \equiv d_2 s' \bmod \Z
	}}  
	\min \left\{ a_1 d_2, a_2 d_1 \right\} \\
	= &\sum_{a_1 d_1 = N_1, a_2 d_2 = N_2} 
	|a_1 d_2 - a_2 d_1| 
	- 4\sum_{\substack{
			a_1 d_1 = N_1, a_2 d_2 = N_2, \\
			a_1 d_2 \equiv a_2 d_1 \bmod 5
	}}  
	\min \left\{ a_1 d_2, a_2 d_1 \right\}.
	\end{align*}
\end{proof}

We have the following main Theorem in this paper. 

\begin{theorem}
	Unless $ M = 25 $ and $ N_1 \equiv \pm N_2 \bmod 5 $, we have
	\begin{align*}
	&\sum_{x \in \Z,\ x^2 < 4N_1 N_2} \sum_{d \mid (N_1, N_2, x)}
	d \cdot H^M \left( \frac{4N_1 N_2- x^2}{d^2} \right) \\
	= 2 &\sum_{a_1 d_1 = N_1, a_2 d_2 = N_2, a_1 d_2 > a_2 d_1} 
	( a_1 d_2 - \delta_M(N_1, N_2) a_2 d_1 )
	\end{align*}
	where $ \delta_M(N_1, N_2) $ is defined in Theorem $ \ref{thm:main_cusp} $. 
	If $ M = 25 $ and $ N_1 \equiv \pm N_2 \bmod 5 $, then
	\begin{align*}
	&\sum_{x \in \Z,\ x^2 < 4N_1 N_2} \sum_{d \mid (N_1, N_2, x)}
	d \cdot H^{25} \left( \frac{4N_1 N_2- x^2}{d^2} \right) \\
	= &\sum_{a_1 d_1 = N_1, a_2 d_2 = N_2} 
	|a_1 d_2 - a_2 d_1| 
	- 8 \sum_{\substack{
			a_1 d_1 = N_1, a_2 d_2 = N_2, a_1 d_2 > a_2 d_1, \\
			a_1 d_2 \equiv a_2 d_1 \bmod 5
	}}  
	a_2 d_1.
	\end{align*}
\end{theorem}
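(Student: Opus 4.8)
The plan is to recognize that the final statement is an immediate consequence of chaining together the two intersection-number computations already in place: Theorem \ref{thm:main_Y_0(M)} handles the left-hand side, and Theorem \ref{thm:main_cusp} handles the right-hand side. No new geometry is needed here, so I would present this as essentially a corollary.

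First I would invoke Theorem \ref{thm:main_Y_0(M)}. Since $N_1 N_2$ is assumed not to be a square, the cycles $T_{N_1}^{\Gamma_0(M)}$ and $T_{N_2}^{\Gamma_0(M)}$ intersect properly, and that theorem identifies the left-hand side of the present statement with the intersection number
\[
(T_{N_1}^{\Gamma_0(M)} \cdot T_{N_2}^{\Gamma_0(M)})_{Y_0(M) \times Y_0(M)}.
\]
This reduces the whole problem to evaluating that intersection number on the open modular surface. Next I would quote Theorem \ref{thm:main_cusp}, which evaluates exactly this quantity. In the generic case (all $M$ other than $M=25$ with $N_1 \equiv \pm N_2 \bmod 5$) it returns the value $2\sum_{a_1 d_1 = N_1,\, a_2 d_2 = N_2,\, a_1 d_2 > a_2 d_1}(a_1 d_2 - \delta_M(N_1, N_2)\, a_2 d_1)$, which is verbatim the right-hand side claimed here, with the same piecewise values of $\delta_M(N_1, N_2)$. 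So the two substitutions compose and the proof is complete in this case.

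The only genuine bookkeeping occurs in the exceptional case $M=25$, $N_1 \equiv \pm N_2 \bmod 5$, where Theorem \ref{thm:main_cusp} delivers the intersection number in the shape
\[
\sum_{a_1 d_1 = N_1,\, a_2 d_2 = N_2} |a_1 d_2 - a_2 d_1| - 4 \sum_{\substack{a_1 d_1 = N_1,\, a_2 d_2 = N_2, \\ a_1 d_2 \equiv a_2 d_1 \bmod 5}} \min\{a_1 d_2,\, a_2 d_1\},
\]
whereas the present statement records it with coefficient $8$ and summand $a_2 d_1$ over the range $a_1 d_2 > a_2 d_1$. To reconcile the two I would apply the involution $(a_1, d_1, a_2, d_2) \mapsto (d_1, a_1, d_2, a_2)$, which interchanges $a_1 d_2$ and $a_2 d_1$, preserves $\min$, and preserves the congruence $a_1 d_2 \equiv a_2 d_1 \bmod 5$. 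Because $N_1 N_2$ is not a square we never have $a_1 d_2 = a_2 d_1$, so this involution bijectively pairs the terms with $a_1 d_2 < a_2 d_1$ (whose $\min$ equals $a_1 d_2$) to those with $a_1 d_2 > a_2 d_1$ (whose $\min$ equals $a_2 d_1$), giving $4\sum_{\mathrm{all}} \min = 8 \sum_{a_1 d_2 > a_2 d_1} a_2 d_1$ over the congruence-restricted range, which is the asserted form.

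The main obstacle is not located in this final assembly at all. The substantive work—expressing both the Hurwitz-class-number sum and the cusp contributions as intersection multiplicities, and in particular the delicate local computation at the cusps via generalized Atkin-Lehner involutions—has already been carried out in Theorems \ref{thm:main_Y_0(M)} and \ref{thm:main_cusp}. Consequently the only care required here is to check that the index sets and the case split for $\delta_M(N_1, N_2)$ quoted from Theorem \ref{thm:main_cusp} line up term-for-term with the claimed right-hand side, together with the elementary symmetrization in the $M=25$ case described above.
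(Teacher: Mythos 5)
Your proposal is correct and follows exactly the paper's route: the paper's own proof is the one-line observation that the statement follows by combining Theorem \ref{thm:main_Y_0(M)} (identifying the class-number sum with the intersection number on $Y_0(M)\times Y_0(M)$) with Theorem \ref{thm:main_cusp} (evaluating that intersection number). Your explicit symmetrization reconciling the coefficient $4$ with $\min$ against the coefficient $8$ with $a_2d_1$ over $a_1d_2>a_2d_1$ in the $M=25$ case is a correct piece of bookkeeping that the paper leaves implicit, and it does use the standing non-square hypothesis to rule out $a_1d_2=a_2d_1$.
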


\begin{proof}
	It follows from Theorem \ref{thm:main_Y_0(M)} and Theorem \ref{thm:main_cusp}.
\end{proof}

Theorem \ref{thm:main_class_num} is the special case in this theorem.

% --------------------------------------------------------------------------

\section{Explicit computation of $ H^M(D) $ for some $ M $} \label{sec:computation}

% --------------------------------------------------------------------------

In this section, we give a method to compute the Hurwitz class number $ H^M(D) $ for $ M $ when $ 2 \le M \le 10 $ or $ M \in \{ 12, 13, 16, 18, 25 \} $.

If the level is a prime number $ p \in \{ 2, 3, 5, 7, 13 \} $, then $ H^p(D) $ can be calculated from $ H(D) $ by the following theorem.

\begin{theorem}[{\cite[Lemma 3.2]{CK}}] \label{thm:Choi-Kim}
	For a prime number $ p \in \{ 2, 3, 5, 7, 13 \} $ and a positive integer $ D \equiv 0, 3 \bmod 4 $, we have
	\[
	H^p(D)
	= \left( 1 + \left( \frac{-D}{p} \right) \right)
	\left( H(D) + p \cdot H \left( \frac{D}{p^2} \right) \right).
	\]
	Here we define $ H(D/p^2) := 0 $ if $ p^2 \nmid D $.
\end{theorem}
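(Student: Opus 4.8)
The plan is to read $H^p(D)$ as twice a groupoid mass and to compare the mass of $\calQ_{-D,>0}^{p}$ under $\Gamma_0(p)$ against the mass of \emph{all} positive definite discriminant $-D$ forms under $\SL_2(\Z)$, which is $H(D)$, together with the rescaled contribution of discriminant $-D/p^2$ forms, which is $H(D/p^2)$. Write $m(D):=\tfrac12 H(D)=\sum_{[Q]}1/\#\SL_2(\Z)_Q$ and $m^p(D):=\tfrac12 H^p(D)=\sum_{[Q]}1/\#\Gamma_0(p)_Q$, the sums running over $\calQ^1_{-D,>0}/\SL_2(\Z)$ and $\calQ_{-D,>0}^{p}/\Gamma_0(p)$ respectively. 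The crucial observation is that $\Gamma_0(p)$ is exactly the stabilizer in $\SL_2(\Z)$ of the point $[1:0]\in\bbP^1(\mathbb{F}_p)$ (acting on column vectors), and that a form $Q'=[pa,b,c]\in\calQ_{-D,>0}^{p}$ carries the same information as the pair $(Q',[1:0])$, since $Q'(1,0)=pa\equiv 0\bmod p$ exhibits $[1:0]$ as a projective root of $Q'\bmod p$.

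First I would introduce the genuine $\SL_2(\Z)$-set
\[
\hat S:=\{(Q,v)\mid Q\in\calQ^1_{-D,>0},\ v\in\bbP^1(\mathbb{F}_p),\ Q(v)\equiv 0\bmod p\}
\]
with the diagonal action, and show that $Q'\mapsto(Q',[1:0])$ induces an equivalence of action groupoids $[\calQ_{-D,>0}^{p}/\Gamma_0(p)]\simeq[\hat S/\SL_2(\Z)]$. Essential surjectivity uses transitivity of $\SL_2(\Z)$ on $\bbP^1(\mathbb{F}_p)$; full faithfulness uses that the stabilizer of $[1:0]$ in $\SL_2(\Z)$ is $\Gamma_0(p)$; and the stabilizer of $(Q',[1:0])$ equals $\Gamma_0(p)_{Q'}$, so the two masses coincide: $m^p(D)=\lvert[\hat S/\SL_2(\Z)]\rvert$.

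Next, the forgetful map $(Q,v)\mapsto Q$ fibers $\hat S$ over $\calQ^1_{-D,>0}$. The fiber over a class $[Q]$ is the root set $R_Q:=\{v\in\bbP^1(\mathbb{F}_p)\mid Q(v)\equiv 0\bmod p\}$, a finite $\SL_2(\Z)_Q$-set contributing $\#R_Q/\#\SL_2(\Z)_Q$, whence $m^p(D)=\sum_{[Q]}\#R_Q/\#\SL_2(\Z)_Q$. It then remains to count the projective roots of the reduced form $Q\bmod p$ in three regimes. When $p\nmid D$ the reduced form is nondegenerate and $\#R_Q=1+\bigl(\tfrac{-D}{p}\bigr)$, depending only on the splitting of $-D$ mod $p$. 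When $p\,\|\,D$ the reduced form is nonzero but degenerate, with a single (double) root, so $\#R_Q=1=1+\bigl(\tfrac{-D}{p}\bigr)$. When $p^2\mid D$, the non-$p$-divisible forms again give a single root, while the $p$-divisible forms $Q=pQ_0$ vanish identically mod $p$ and give $\#R_Q=p+1$; these $p$-divisible classes biject, stabilizer-preservingly, with classes of discriminant $-D/p^2$ via $Q\mapsto Q_0$, contributing the extra $p\cdot m(D/p^2)$. Collecting the cases gives $m^p(D)=\bigl(1+(\tfrac{-D}{p})\bigr)m(D)+p\,m(D/p^2)$ uniformly (noting $m(D/p^2)=0$ unless $p^2\mid D$, where the first factor is $1$), and doubling yields the claimed identity.

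The main obstacle is the root count $\#R_Q$, and in particular the prime $p=2$, which lies in the allowed set $\{2,3,5,7,13\}$: there "discriminant a square mod $p$" must be replaced by the Kronecker symbol, and the hypothesis $D\equiv 0,3\bmod 4$ is exactly what makes $\#R_Q=1+\bigl(\tfrac{-D}{2}\bigr)$ come out correctly, alongside the check that a nonzero degenerate binary form over $\mathbb{F}_p$ has precisely one projective root. The other delicate point is the bookkeeping ensuring that the $p$-divisible contribution reassembles exactly as $p\,H(D/p^2)$ with matched stabilizers, and that all three discriminant regimes fold uniformly into the single formula $\bigl(1+(\tfrac{-D}{p})\bigr)\bigl(H(D)+p\,H(D/p^2)\bigr)$.
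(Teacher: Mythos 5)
Your proposal is correct, but note that the paper itself offers no proof of this statement: it is quoted from \cite[Lemma 3.2]{CK} with a pointer to \cite[Proposition 3.3]{Mura}, so there is no in-paper argument to compare against. Your derivation is the standard one and it closes every gap you flag. The groupoid equivalence is sound because $\Gamma_0(p)$ is precisely the stabilizer of $[1:0]\in\bbP^1(\mathbb{F}_p)$ and $\SL_2(\Z)$ surjects onto $\SL_2(\mathbb{F}_p)$, so the mass of $\calQ^{p}_{-D,>0}/\Gamma_0(p)$ equals $\sum_{[Q]}\#R_Q/\#\SL_2(\Z)_Q$ over classes of discriminant $-D$. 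The root counts check out in all three regimes, including the two delicate points: for $p=2$ the hypothesis $D\equiv 0,3\bmod 4$ means either $D$ is odd, where an enumeration of the three points of $\bbP^1(\mathbb{F}_2)$ gives $\#R_Q=2$ when $-D\equiv 1\bmod 8$ and $\#R_Q=0$ when $-D\equiv 5\bmod 8$, matching $1+\left(\frac{-D}{2}\right)$ for the Kronecker symbol, or $4\mid D$, where $b$ is even and $Q\equiv (aX+cY)^2\bmod 2$ so the degenerate count applies (the case $p\,\Vert\,D$ simply cannot occur for $p=2$); and the apparent mismatch between your additive formula $\bigl(1+\left(\frac{-D}{p}\right)\bigr)m(D)+p\,m(D/p^2)$ and the stated product form is vacuous, since the two differ only when $p^2\mid D$, where the Kronecker factor equals $1$. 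The bijection $Q\mapsto Q/p$ between $p$-divisible classes of discriminant $-D$ and all classes of discriminant $-D/p^2$ is stabilizer-preserving and $\SL_2(\Z)$-equivariant (content is an invariant), which gives exactly the extra term $p\,m(D/p^2)$. One can sanity-check the $p^2\mid D$ case against Table \ref{tab:Hurwitz_class_num1}, e.g. $H^2(16)=H(16)+2H(4)=5/2$ and $H^3(27)=H(27)+3H(3)=7/3$.
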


This theorem is slightly different from the original statement of \cite[Lemma 3.2]{CK}.
See also a proof of \cite[Proposition 3.3]{Mura}.

In general, we can calculate the Hurwitz class number $ H^M(D) $ by considering a fundamental domain of $ \Gamma_0(M) $.
The following elementary lemma is useful for computing $ H^M(D) $.

\begin{lemma} \label{lem:w_Q_inequality}
	Let $ D \equiv 0, 3 \bmod 4 $ be a positive integer, $ [Ma, b, c] \in \calQ_{-D, >0}^{M} $ with $ a, c \ge 1 $ and
	\[
	w_Q^{} := \frac{-b + \sqrt{-D}}{2Ma}.
	\]	
	For positive integers $ m $ and $ n $, if
	\[
	\abs{w_Q^{} \pm \frac{m}{n}} \ge \frac{1}{n},
	\]
	then $ \pm b \le ka + lc $ where
	\[
	k := \frac{M(m^2 - 1)}{mn}, \quad
	l := \frac{n}{m}.
	\]
	Moreover, if $ m \ge 2 $, $ \abs{b} \le ka + lc $, and
	$ r_{-}c + t_{-} \le a \le r_{+}c - t_{+} $ with $ t_{+}, t_{-} > 0 $ and
	\[
	r_{+} := \frac{1}{M} \left( \frac{n}{m - 1} \right)^2, \quad
	r_{-} := \frac{1}{M} \left( \frac{n}{m + 1} \right)^2,
	\]
	then $ a \le D/C $ where
	\[
	C := \min \left\{ 
	t_{+} \left( k^2 - \frac{l^2}{r_{+}(r_{+} - t_{+})} \right), \quad
	-t_{-} \left( k^2 - \frac{l^2}{r_{-}(r_{-} + t_{-})} \right)
	\right\}.
	\]
\end{lemma}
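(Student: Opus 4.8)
The plan is to treat the two assertions separately, since they rest on different inputs. The first is pure algebra built on the discriminant relation $b^2 + D = 4Mac$, which is just $\disc [Ma,b,c] = -D$. Writing $w_Q = \frac{-b}{2Ma} + \frac{\sqrt{D}}{2Ma}\sqrt{-1}$, I would expand
\[
\Bigl| w_Q \pm \tfrac mn \Bigr|^2 = \Bigl( \tfrac{-b}{2Ma} \pm \tfrac mn \Bigr)^2 + \frac{D}{4M^2a^2}
\]
and use $\frac{b^2+D}{4M^2a^2} = \frac{c}{Ma}$ to collapse the $b^2$ and $D$ contributions. The hypothesis $\bigl| w_Q \pm \tfrac mn \bigr| \ge \tfrac1n$ then reads $\frac{c}{Ma} \mp \frac{mb}{Man} + \frac{m^2-1}{n^2} \ge 0$; clearing denominators by multiplying through by $Man/m$ and reading off $k = \frac{M(m^2-1)}{mn}$, $l = \frac nm$ gives exactly $\pm b \le ka + lc$, with the sign matching that in $w_Q \pm \frac mn$. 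This step is routine.

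For the second assertion the starting point is $D = 4Mac - b^2 \ge 4Mac - (ka+lc)^2$, valid because $|b| \le ka+lc$. The crucial observation is the factorization
\[
4Mac - (ka+lc)^2 = k^2 (a - r_- c)(r_+ c - a),
\]
which makes sense precisely because $m \ge 2$ forces $k \neq 0$ and $r_+ \neq r_-$. I would justify it by viewing the right-hand side, divided by $c^2$, as a downward parabola in $\rho = a/c$ and checking that $r_\pm$ are its roots: with $kl = \frac{M(m^2-1)}{m^2}$ one reads off product of roots $l^2/k^2 = r_+ r_-$ and sum $(4M-2kl)/k^2 = r_+ + r_-$, and a short computation identifies these roots with $\frac1M\bigl(\frac{n}{m\mp 1}\bigr)^2$. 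In particular $k^2 r_+ r_- = l^2$, and this identity is exactly what reconciles the factored form above with the denominators $r_\pm(r_\pm \mp t_\pm)$ in the definition of $C$. Thus it remains to prove $k^2(a-r_-c)(r_+c-a) \ge Ca$ on the feasible region.

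For this I would fix $a$ and regard $P(c) := (a-r_-c)(r_+c-a)$ as a downward parabola in $c$, so that over the feasible interval $c \in [\,\max\{1,(a+t_+)/r_+\},\,(a-t_-)/r_-\,]$ (the upper endpoint from $r_-c+t_- \le a$, the lower one from $a \le r_+c-t_+$ together with $c\ge 1$) its minimum is attained at an endpoint. At $c = (a-t_-)/r_-$ one has $a-r_-c = t_-$, the ratio $k^2P/a$ is increasing in $a$, and feasibility with $c\ge 1$ forces $a \ge r_-+t_-$; evaluating at $a = r_-+t_-$ yields the second term of $C$. At $c = (a+t_+)/r_+$ one has $r_+c-a = t_+$, again $k^2P/a$ is increasing in $a$, and here $c\ge 1$ forces $a \ge r_+-t_+$, yielding the first term of $C$. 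The remaining possibility $c=1$ (which occurs only when $a < r_+-t_+$) is handled by noting that on $c=1$ the feasible range of $a$ is $[r_-+t_-,\,r_+-t_+]$ and $k^2P(1)/a = k^2(a-r_-)(r_+-a)/a$ is unimodal in $a$, so its minimum over that interval is at an endpoint, where it again equals the first, respectively second, term of $C$. In every case $k^2P/a \ge \min\{\text{term}_1,\text{term}_2\} = C$, whence $D \ge k^2P \ge Ca$, i.e. $a \le D/C$.

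The main obstacle is this last step: it is genuinely a constrained optimization, and the decisive point is that the constraints $a\ge1$ and $c\ge1$ are what pin the bound to the specific values $a = r_+-t_+$ and $a = r_-+t_-$, rather than to the unconstrained corner $a-r_-c=t_-,\ r_+c-a=t_+$, which would produce a different and incompatible constant. Keeping track of which endpoint of the $c$-interval is active, and verifying that each returns a value $\ge C$, is where the care lies; the monotonicity of $k^2P/a$ along the two slanted boundaries and its unimodality along $c=1$ are the facts that make the three cases collapse to the single bound $C$.
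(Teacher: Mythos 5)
Your proposal is correct and follows essentially the same route as the paper: both parts rest on the discriminant identity $b^2 + D = 4Mac$, the key inequality $D \ge 4Mac - (ka+lc)^2$, the identification of $r_{\pm}$ as the roots of the associated quadratic in $a/c$, and a reduction to the boundary of the constraint region using $c \ge 1$. The only difference is bookkeeping: the paper perturbs $f(x) = k^2 x + l^2/x$ around $x = r_{\pm}$ by $t_{\pm}/c$ and then relaxes $t_{\pm}/c$ to $t_{\pm}$ inside the correction term, whereas you factor the quadratic as $k^2(a - r_{-}c)(r_{+}c - a)$ and run an explicit two-stage endpoint optimization; the corner values $a = r_{\pm} \mp t_{\pm}$, $c = 1$ you land on reproduce exactly the paper's relaxed bounds, so the two arguments yield the same constant $C$.
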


\begin{proof}
	By direct calculation, the condition 
	\[
	\abs{w_Q^{} \pm \frac{m}{n}} \ge \frac{1}{n}
	\]
	is equivalent to $ Mam^2 \mp bmn + cn^2 \ge Ma $, that is, $ \pm b \le ka + lc $.
	
	Suppose $ m \ge 2 $ and $ \abs{b} \le ka + lc $.
	Let $ f(x) := k^2 x + l^2/x $.
	Then we have
	\[
	D \ge 4Mac - (ka + lc)^2
	= ac \left(4M - 2kl - f\left( \frac{a}{c} \right)\right).
	\]
	By direct calculation, $ f(x) = 4M-2kl $ if and only if 
	$ x = r_{+} $ or $ x = r_{-} $.
	Moreover, if $ r_{-} \le x \le l/k $ then $ f(x) $ is monotonic decreasing and if $ l/k \le x \le r_{+} $ then $ f(x) $ is monotonic increasing by elementary calculus.
	For $ t_{+}, t_{-} > 0 $, we have
	\[
	f\left( r_{-} + \frac{t_{-}}{c} \right)
	= f(r_{-}) + \frac{t_{-}}{c} \left( k^2 - \frac{l^2}{r_{-} (r_{-} + t_{-}/c)} \right)
	\le f(r_{-}) + \frac{t_{-}}{c} \left( k^2 - \frac{l^2}{r_{-} (r_{-} + t_{-})} \right)
	\]
	and
	\[
	f\left( r_{+} - \frac{t_{+}}{c} \right)
	= f(r_{+}) - \frac{t_{+}}{c} \left( k^2 - \frac{l^2}{r_{+} (r_{+} - t_{+}/c)} \right)
	\le f(r_{+}) + \frac{t_{+}}{c} \left( k^2 - \frac{l^2}{r_{+} (r_{+} - t_{+})} \right).
	\]
	Hence if $ r_{-}c + t_{-} \le a \le r_{+}c - t_{+} $, then
	\[
	D \ge ac \left(4M - 2kl - f\left( \frac{a}{c} \right)\right)
	\ge aC.
	\]
\end{proof}

Here we calculate $ H^M(D) $ when $ M $ is a composite number by the following lemma.

\begin{lemma}
	Let $ D \equiv 0, 3 \bmod 4 $ be a positive integer.
	\begin{enumerate}
		\item \label{item:lem:simple_form_M=4}
		The set
		\[
		\left\{ [4a, b, c] \in \calQ_{-D, >0}^{4} \relmiddle|
		\begin{gathered}
			\abs{b} \le 4 \min\{ a, c \}, \\
			\text{if } \abs{b} = 4 \min\{ a, c \} \text{ then } 0 \le b		
		\end{gathered}
		\right\}
		\]
		is a complete system of representatives of $ \calQ_{-D, >0}^{4}/\Gamma_0(4) $.
		Moreover, if $ [4a, b, c] $ is an element of this set, then 
		$ a, c \le (D+1)/8 $.
		\item \label{item:lem:simple_form_M=6}
		The set
		\[
		\left\{ [6a, b, c] \in \calQ_{-D, >0}^{6} \relmiddle|
		\begin{gathered}
			\abs{b} \le 6 \min\{ a, c, (2/5)(a + c) \}, \\
			\text{if } \abs{b} = 6 \min\{ a, c, (2/5)(a + c)  \} \text{ then } 0 \le b		
		\end{gathered}
		\right\}
		\]
		is a complete system of representatives of $ \calQ_{-D, >0}^{6}/\Gamma_0(6) $.
		Moreover, if $ [6a, b, c] $ is an element of this set, then 
		$ a, c \le (25/24)D $.
		\item \label{item:lem:simple_form_M=8}
		The set
		\[
		\left\{ [8a, b, c] \in \calQ_{-D, >0}^{8} \relmiddle|
		\begin{gathered}
			\abs{b} \le 8a, -8c \le b \le 4c, -(8/7)(2a + 3c) \le b, -(4/5)(4a + 3c) \le b \\
			\text{if } 
			b \in \{  
			\pm 8a, -8c, 4c, -(8/7)(2a + 3c), -(4/5)(4a + 3c)
			\} \\
			\text{ then }
			-4a \le b		
		\end{gathered}
		\right\}
		\]
		is a complete system of representatives of $ \calQ_{-D, >0}^{8}/\Gamma_0(8) $.
		Moreover, if $ [8a, b, c] $ is an element of this set, then 
		$ a, c \le (245/96)D $.
		\item \label{item:lem:simple_form_M=9}
		The set
		\[
		\left\{ [9a, b, c] \in \calQ_{-D, >0}^{9} \relmiddle|
		\begin{gathered}
			\abs{b} \le 9 \min\{ a, c, (2/5)(3a + 2c) \}, \\
			\text{if } \abs{b} = 9 \min\{ a, c, (2/5)(3a + 2c) \} 
			\text{ then } 0 \le b		
		\end{gathered}
		\right\}
		\]
		is a complete system of representatives of $ \calQ_{-D, >0}^{9}/\Gamma_0(9) $.
		Moreover, if $ [9a, b, c] $ is an element of this set, then 
		$ a, c \le (25/72)D $.
		\item \label{item:lem:simple_form_M=10}
		The set
		\[
		\left\{ [10a, b, c] \in \calQ_{-D, >0}^{10} \relmiddle|
		\begin{gathered}
			\abs{b} \le 10 \min\{ a, (3/5)c, (1/11)(4a + 3c) , (2/9)(2a + c) \}, \\
			\text{if } \abs{b} = 10 \min\{ a, (3/5)c, (1/11)(4a + 3c) , (2/9)(2a + c)  \} \\
			\text{ then } (20/3)a \le b \text{ or } \abs{b} \le 6a		
		\end{gathered}
		\right\}
		\]
		is a complete system of representatives of $ \calQ_{-D, >0}^{10}/\Gamma_0(10) $.
		Moreover, if $ [10a, b, c] $ is an element of this set, then 
		$ a, c \le (121/35)D $.
		\item \label{item:lem:simple_form_M=12}
		The set
		\[
		\left\{ [12a, b, c] \in \calQ_{-D, >0}^{12} \relmiddle|
		\begin{gathered}
			\abs{b} \le \min\{ 12a, (12/5)(2a + c),(24/7)(a + c) \}, \\
			-12c \le b \le 8c,
			b \ge \max \{ -(12/11)(2a + 5c), -(8/9)(3a + 5c) \}, \\
			\text{if }
			\abs{b} = \min\{ 12a, (12/5)(2a + c),(24/7)(a + c) \},
			b = -12c, b = 8c
			\text{ or } \\
			b = \max \{ -(12/11)(2a + 5c), -(8/9)(3a + 5c) \},
			\text{ then }
			-4a \le b \le 12a		
		\end{gathered}
		\right\}
		\]
		is a complete system of representatives of $ \calQ_{-D, >0}^{12}/\Gamma_0(12) $.
		Moreover, if $ [12a, b, c] $ is an element of this set, then 
		$ a, c \le (1573/240)D $.
		\item \label{item:lem:simple_form_M=16}
		The set
		\[
		\left\{ [16a, b, c] \in \calQ_{-D, >0}^{16} \relmiddle|
		\begin{gathered}
			\abs{b} \le \min\{ 16a, 8c, (8/7)(4a + 3c) \},
			b \le (4/5)(8a + 3c), \\
			b \ge \max \{ -(48/17)(2a + c), -(16/31)(12a + 5c), -(4/9)(16a + 5c) \}, \\
			\text{if }
			\abs{b} = \min\{ 16a, 8c, (8/7)(4a + 3c) \},
			b = (4/5)(8a + 3c)
			\text{ or } \\
			b = \max \{ -(48/17)(2a + c), -(16/31)(12a + 5c) \}, \\
			\text{ then }
			\abs{b} \le	8a, b \ge (32/3)a
			\text{ or }	
			-12a \le b \le -(32/3)a
		\end{gathered}
		\right\}
		\]
		is a complete system of representatives of $ \calQ_{-D, >0}^{16}/\Gamma_0(16) $.
		Moreover, if $ [16a, b, c] $ is an element of this set, then 
		$ a, c \le D $.
		\item \label{item:lem:simple_form_M=18}
		The set
		\[
		\left\{ [18a, b, c] \in \calQ_{-D, >0}^{18} \relmiddle|
		\begin{gathered}
			\abs{b} \le \min \left\{
			\begin{gathered}
				18a, 12c,
				(12/11)(3a + 5c),
				(18/19)(4a + 5c), \\
				(12/7)(3a + 2c),
				(12/5)(3a + c),
				(72/17)(a + c)
			\end{gathered}
			\right\}, \\
			\text{if }
			\abs{b} = \min \left\{
			\begin{gathered}
			18a, 12c,
			(12/11)(3a + 5c),
			(18/19)(4a + 5c), \\
			(12/7)(3a + 2c),
			(12/5)(3a + c),
			(72/17)(a + c)
			\end{gathered}
			\right\}, \\
			\text{ then }
			\abs{b} \le	6a,
			(36/5)a \le b \le 9a,
			12a \le \abs{b} < 18a	
			\text{ or }
			b = 18a
		\end{gathered}
		\right\}
		\]
		is a complete system of representatives of $ \calQ_{-D, >0}^{18}/\Gamma_0(18) $.
		Moreover, if $ [18a, b, c] $ is an element of this set, then 
		$ a, c \le (361/45)D $.
		\item \label{item:lem:simple_form_M=25}
		The set
		\[
		\left\{ [25a, b, c] \in \calQ_{-D, >0}^{25} \relmiddle|
		\begin{gathered}
			\abs{b} \le \min \left\{
			\begin{gathered}
			25a, 10c,
			(10/9)(5a + 4c),
			(2/7)(25a + 12c), \\
			(10/11)(10a + 3c),
			(20/9)(5a + c)
			\end{gathered}
			\right\}, \\
			\text{if }
			\abs{b} = \min \left\{
			\begin{gathered}
			25a, 10c,
			(10/9)(5a + 4c),
			(2/7)(25a + 12c), \\
			(10/11)(10a + 3c),
			(20/9)(5a + c)
			\end{gathered}
			\right\}, \\
			\text{ then }
			\abs{b} \le	10a,
			14a \le \abs{b} \le 20a
			\text{ or }
			b = 25a
		\end{gathered}
		\right\}
		\]
		is a complete system of representatives of $ \calQ_{-D, >0}^{25}/\Gamma_0(25) $.
		Moreover, if $ [25a, b, c] $ is an element of this set, then 
		$ a, c \le (968/175)D $.
	\end{enumerate}
\end{lemma}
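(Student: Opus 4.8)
The plan is to convert the classification of $\Gamma_0(M)$-orbits of forms into the reduction theory of imaginary quadratic (CM) points on $\bbH$, and then to read off both the defining inequalities and the bounds $a,c \le C_M D$ directly from Lemma \ref{lem:w_Q_inequality}. First I would record the $\Gamma_0(M)$-equivariant bijection $Q = [Ma,b,c] \mapsto w_Q = \frac{-b+\sqrt{-D}}{2Ma} \in \bbH$, which satisfies $w_{Q \circ \gamma} = \gamma^{-1}(w_Q)$ for $\gamma \in \Gamma_0(M)$. Consequently, giving a complete system of representatives for $\calQ_{-D,>0}^{M}/\Gamma_0(M)$ is the same as choosing one discriminant $-D$ CM point in each $\Gamma_0(M)$-orbit, i.e. the CM points lying in a fixed fundamental domain $\mathcal{F}_M$ for $\Gamma_0(M)\backslash\bbH$. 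Thus each ``complete system'' assertion reduces to showing that the stated inequalities, together with the tie-breaking clauses, cut out exactly the CM points in one such $\mathcal{F}_M$.

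Next I would produce $\mathcal{F}_M$ explicitly by translating the standard $\SL_2(\Z)$ domain $\{\,|\Re z| \le 1/2,\ |z| \ge 1\,\}$ by representatives of the $\Gamma_0(M)$-cosets in $\SL_2(\Z)$ (of which there are $M\prod_{p \mid M}(1+1/p)$). The boundary of $\mathcal{F}_M$ then consists of the vertical lines $\Re z = \pm 1/2$ together with finitely many geodesic arcs of the form $|z \pm m/n| = 1/n$. The strip condition $|\Re w_Q| \le 1/2$ is exactly $|b| \le Ma$, while each arc condition $|w_Q \pm m/n| \ge 1/n$ becomes, by the first part of Lemma \ref{lem:w_Q_inequality}, the linear inequality $\pm b \le ka + lc$ with $k = M(m^2-1)/(mn)$ and $l = n/m$. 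Matching the arcs to the coset representatives is what produces precisely the minima recorded in each part; for instance, for $M = 6$ the pairs $(m,n) = (1,6)$ and $(m,n) = (5,12)$ yield $|b| \le 6c$ and $|b| \le (12/5)(a+c)$, respectively, which together with the strip bound $|b| \le 6a$ give the stated $\min\{a,c,(2/5)(a+c)\}$. The ``if equality then $\dots$'' clauses encode the gluing of the boundary of $\mathcal{F}_M$: along each bounding arc two $\Gamma_0(M)$-equivalent forms occur, and the sign/size condition selects exactly one of them, which is what forces the exactly-once count (uniqueness), while the covering property of $\mathcal{F}_M$ gives existence of a representative in every orbit.

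For the bounds, once the inequalities are in hand I would invoke the second part of Lemma \ref{lem:w_Q_inequality}: for a suitable arc $(m,n)$ with $m \ge 2$ and an interval $r_- c + t_- \le a \le r_+ c - t_+$ covering the reduced region, it delivers $a \le D/C$ with the explicit constant $C$, and optimizing over the relevant arcs yields the stated $a \le C_M D$ (e.g. $(25/24)D$ for $M=6$, or the sharper $(D+1)/8$ for $M=4$). The companion bound on $c$ follows by the same computation after the interchange $a \leftrightarrow c$, which in the symmetric cases is implemented by the Fricke involution $\frac{1}{\sqrt{M}}\smat{0 & -1 \\ M & 0}$ normalizing $\Gamma_0(M)$, and in the asymmetric cases $M \in \{8,12,16,18\}$ is obtained from the analogous arcs directly.

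The main obstacle is the middle step: pinning down, for each composite $M$ separately, exactly which arcs $|z \pm m/n| = 1/n$ bound $\mathcal{F}_M$ and how its boundary is identified, so that the chosen tie-breaking rules select each orbit precisely once. This is an unavoidably case-by-case geometric bookkeeping, and it is most delicate for $M \in \{12,16,18,25\}$, where $\mathcal{F}_M$ has several arcs of differing radii and the equality clauses split into multiple branches (reflecting cusps of width $>1$ and the asymmetry between the $a$- and $c$-sides). Verifying that no two \emph{interior} reduced forms are $\Gamma_0(M)$-equivalent — equivalently, that the prescribed $\mathcal{F}_M$ tiles $\bbH$ without overlap — is where the bulk of the effort will go, whereas the translation into inequalities and the extraction of the constants $C_M$ are then mechanical applications of Lemma \ref{lem:w_Q_inequality}.
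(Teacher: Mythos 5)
Your proposal follows essentially the same route as the paper: both reduce the classification to locating the CM points $w_Q$ in an explicit fundamental domain for $\Gamma_0(M)$ bounded by $\Re\tau=\pm 1/2$ and arcs $\abs{\tau\pm m/n}=1/n$, translate membership into the stated inequalities via the first part of Lemma \ref{lem:w_Q_inequality} (with the boundary identifications giving the tie-breaking clauses), and extract the bounds on $a,c$ from its second part case by case. The only divergence is the provenance of the fundamental domains: the paper takes them from the Farey-symbol algorithm of Kurth--Long as implemented in Sage, whereas you propose assembling coset translates of the standard $\SL_2(\Z)$ domain, which need not directly yield boundary arcs of the isometric-circle form $\abs{\tau\pm m/n}=1/n$ and so would require the same per-level normalization and exactness check that you already identify as the main labor.
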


\begin{proof}
	Firstly, we get a complete system of representatives of $ \calQ_{-D, >0}^{M}/\Gamma_0(M) $ in each case by Lemma \ref{lem:w_Q_inequality} since we have fundamental domains of $ \Gamma_0(4), \Gamma_0(6), \Gamma_0(8), \Gamma_0(9), \Gamma_0(10), \Gamma_0(12) $,  $ \Gamma_0(16) $, $ \Gamma_0(18) $, and $ \Gamma_0(25) $ as
	\[
	\left\{ \tau \in \bbH \relmiddle|
	\begin{gathered}
	\abs{\ReNew(\tau)} \le 1/2, 
	\abs{\tau \pm 1/4} \ge 1/4, \\
	\text{if } \abs{\ReNew(\tau)} = 1/2
	\text{ or } \abs{\tau \pm 1/4} = 1/4
	\text{ then } \ReNew(\tau) \le 0
	\end{gathered}
	\right\},
	\]
	\[
	\left\{ \tau \in \bbH \relmiddle|
	\begin{gathered}
	\abs{\ReNew(\tau)} \le 1/2, 
	\abs{\tau \pm 1/6} \ge 1/6,
	\abs{\tau \pm 5/12} \ge 5/12, \\
	\text{if } \abs{\ReNew(\tau)} = 1/2,
	\abs{\tau \pm 1/6} = 1/6
	\text{ or } \abs{\tau \pm 5/12} \ge 5/12
	\text{ then } \ReNew(\tau) \le 0
	\end{gathered}
	\right\},
	\]
	\[
	\left\{ \tau \in \bbH \relmiddle|
	\begin{gathered}
	\abs{\ReNew(\tau)} \le 1/2, 
	\abs{\tau + 1/4} \ge 1/4,
	\abs{\tau - 1/8} \ge 1/8, \\
	\abs{\tau - 7/24} \ge 1/24, 
	\abs{\tau - 5/12} \ge 1/12, \\
	\text{if } \abs{\ReNew(\tau)} = 1/2,
	\abs{\tau + 1/4} = 1/4,
	\abs{\tau - 1/8} = 1/8,
	\abs{\tau - 7/24} = 1/24 \\
	\text{ or } \abs{\tau - 5/12} = 1/12	
	\text{ then } \ReNew(\tau) \le 1/4
	\end{gathered}
	\right\},
	\]
	\[
	\left\{ \tau \in \bbH \relmiddle|
	\begin{gathered}
	\abs{\ReNew(\tau)} \le 1/2, 
	\abs{\tau \pm 1/6} \ge 1/6,
	\abs{\tau \pm 5/12} \ge 5/12, \\
	\text{if } \abs{\ReNew(\tau)} = 1/2,
	\abs{\tau \pm 1/6} = 1/6
	\text{ or } \abs{\tau \pm 5/12} \ge 5/12
	\text{ then } \ReNew(\tau) \le 0
	\end{gathered}
	\right\},
	\]
	\begin{equation} \label{eq:fund_dom_M=10}
		\left\{ \tau \in \bbH \relmiddle|
		\begin{gathered}
		\abs{\ReNew(\tau)} \le 1/2, 
		\abs{\tau \pm 1/6} \ge 1/6,
		\abs{\tau \pm 11/30} \ge 1/30, 
		\abs{\tau \pm 9/20} \ge 1/20, \\
		\text{if } \abs{\ReNew(\tau)} = 1/2,
		\abs{\tau \pm 1/6} = 1/6,
		\abs{\tau \pm 11/30} = 1/30
		\text{ or } \abs{\tau \pm 9/20} = 1/20 \\
		\text{ then } \ReNew(\tau) \le -1/3
		\text{ or } \abs{\ReNew(\tau)} \le 3/10
		\end{gathered}
		\right\},	
	\end{equation}
	\[
	\left\{ \tau \in \bbH \relmiddle|
	\begin{gathered}
	\abs{\ReNew(\tau)} \le 1/2, 
	\abs{\tau \pm 5/12} \ge 1/12,
	\abs{\tau \pm 7/24} \ge 1/24,
	\abs{\tau + 1/8} \ge 1/8, 
	\\
	\abs{\tau - 1/12} \ge 1/12, 
	\abs{\tau - 11/60} \ge 1/60,
	\abs{\tau - 9/40} \ge 1/40, 
	\\
	\text{if }
	\abs{\ReNew(\tau)} = 1/2,
	\abs{\tau \pm 5/12} = 1/12,
	\abs{\tau \pm 7/24} = 1/24,
	\abs{\tau + 1/8} = 1/8, 
	\\
	\abs{\tau - 1/12} = 1/12, 
	\abs{\tau - 11/60} = 1/60
	\text{ or } 
	\abs{\tau - 9/40} = 1/40 
	\\
	\text{ then } -1/2 \le \ReNew(\tau) \le 1/6
	\end{gathered}
	\right\},
	\]
	\[
	\left\{ \tau \in \bbH \relmiddle|
	\begin{gathered}
		\abs{\ReNew(\tau)} \le 1/2, 
		\abs{\tau \pm 1/8} \ge 1/8,
		\abs{\tau \pm 7/24} \ge 1/24,
		\abs{\tau + 5/12} \ge 1/12, 
		\\
		\abs{\tau - 17/48} \ge 1/48, 
		\abs{\tau - 31/80} \ge 1/80,
		\abs{\tau - 9/20} \ge 1/20,
		\\
		\text{if }
		\abs{\ReNew(\tau)} = 1/2, 
		\abs{\tau \pm 1/8} = 1/8,
		\abs{\tau \pm 7/12} = 1/24,
		\abs{\tau + 5/12} = 1/12, 
		\\
		\abs{\tau - 17/48} = 1/48, 
		\abs{\tau - 31/80} = 1/80
		\text{ or } 
		\abs{\tau - 9/20} = 1/20,
		\\
		\text{ then } 
		\abs{\ReNew(\tau)} \le 1/4,
		\ReNew(\tau) \le -1/3
		\text{ or }
		1/3 \le \ReNew(\tau) \le 3/8
	\end{gathered}
	\right\},
	\]
	\[
	\left\{ \tau \in \bbH \relmiddle|
	\begin{gathered}
		\abs{\ReNew(\tau)} \le 1/2, 
		\abs{\tau \pm 1/12} \ge 1/12,
		\abs{\tau \pm 11/60} \ge 1/60,
		\\
		\abs{\tau \pm 19/90} \ge 1/90,
		\abs{\tau \pm 17/72} \ge 1/72,
		\abs{\tau \pm 7/24} \ge 1/24,
		\abs{\tau \pm 5/12} \ge 1/12,
		\\
		\text{if }
		\abs{\ReNew(\tau)} = 1/2,
		\abs{\tau \pm 1/12} = 1/12,
		\abs{\tau \pm 11/60} = 1/60,
		\\
		\abs{\tau \pm 19/90} = 1/90,
		\abs{\tau \pm 17/72} = 1/72,
		\abs{\tau \pm 7/24} = 1/24
		\text{ or } 
		\abs{\tau \pm 5/12} = 1/12,
		\\
		\text{ then }
		\abs{\ReNew(\tau)} \le 1/6,
		-1/4 \le \ReNew(\tau) \le -1/5,
		1/3 \le \abs{\ReNew(\tau)} < 1/2 
		\text{ or }
		\abs{\ReNew(\tau)} = -1/2 
	\end{gathered}
	\right\},
	\]
	and
	\begin{equation} \label{eq:fund_dom_M=25}
		\left\{ \tau \in \bbH \relmiddle|
		\begin{gathered}
		\abs{\ReNew(\tau)} \le 1/2, 
		\abs{\tau \pm 1/10} \ge 1/10,
		\abs{\tau \pm 9/40} \ge 1/40,
		\\
		\abs{\tau \pm 7/24} \ge 1/24,
		\abs{\tau \pm 11/30} \ge 1/30,
		\abs{\tau \pm 9/20} \ge 1/20,
		\\
		\text{if }
		\abs{\ReNew(\tau)} = 1/2,
		\abs{\tau \pm 1/10} = 1/10,
		\abs{\tau \pm 9/40} = 1/40,
		\\
		\abs{\tau \pm 7/24} = 1/24,
		\abs{\tau \pm 11/30} = 1/30
		\text{ or } 
		\abs{\tau \pm 9/20} = 1/20,
		\\
		\text{ then }
		\abs{\ReNew(\tau)} \le 1/5,
		7/25 \le \abs{\ReNew(\tau)} \le 2/5	
		\text{ or }
		\abs{\ReNew(\tau)} = -1/2 
		\end{gathered}
		\right\}
	\end{equation}
	by using the algorithm in \cite{Kurth-Long} which is based on the theory of Farey symbols in \cite{Kulkarni} and is implemented for Sage \cite{sagemath} by Chris A. Kurth.
	
	Secondly, we bound $ a $ and $ c $ in each case.
	
	\ref{item:lem:simple_form_M=4}.
	For a quadratic form $ [4a, b, c] $, $ (-b + \sqrt{b^2 - 4ac})/2 $ is a point of above fundamental domain if and only if $ [4a, b, c] $ is an element of the set in statement.
	In this case, if $ a > c $ then 
	\[
	D = -b^2 + 16ac \ge -16c^2 + 16ac = 16c(a-c)
	\]
	and thus we have $ c \le D/16 $ and $ a-c \le D/16 $.
	Then we obtain $ c < a \le D/8 $.
	Similarly, if $ a < c $ then we have $ a < c \le D/8 $.
	If $ a=c $, then $ \abs{b} \le 4a-1 $ since $ 0 < D = -b^2 + 16a^2 $.
	Thus we have $ D \ge -(4a-1)^2 + 16a^2 = 8a-1 $.
		
	For other cases, we have the boundings by Lemma \ref{lem:w_Q_inequality} and similar argument in \ref{item:lem:simple_form_M=4}.
	For \ref{item:lem:simple_form_M=6}, we have:
	\begin{quote}
		\begin{enumerate}[(a)]
			\item If $ 3a < 2c $ or $ 3c < 2a $, then $ a, c \le D/6 $.
			\item If $ 3a = 2c $ or $ 3c = 2a $, then $ a, c \le (D+1)/8 $.
			\item If $ 2a \le 2c < 3a $ or $ 2c \le 2a < 3c $, then $ a, c \le (25/24)D $.
		\end{enumerate}		
	\end{quote}
	For \ref{item:lem:simple_form_M=8}, we have:
	\begin{quote}
		\begin{enumerate}[(a)]
			\item If $ 2a < c $ or $ 2c < a $, then $ a, c \le (3/16)D $.
			\item If $ 2a = c $ or $ 2c = a $, then $ a, c \le (D+1)/8 $.
			\item If $ c < 2a $ and $ b \ge 0 $, then $ a, c \le D/16 $.
			\item If $ 4c < 8a < 9c $ and $ b \le 0 $, then $ a, c \le (25/16)D $.
			\item If $ 9c < 8a < 16c $ and $ b \le 0 $, then $ a, c \le (245/96)D $.
		\end{enumerate}		
	\end{quote}
	For \ref{item:lem:simple_form_M=9}, we have:
	\begin{quote}
		\begin{enumerate}[(a)]
			\item If $ 9a < 4c $, then $ a, c \le (5/18)D $.
			\item If $ 9a = 4c $, then $ a, c \le (D+1)/4 $.
			\item If $ 4a < 4c < 9a $, then $ a, c \le (25/72)D $.
			\item If $ a = c $, then $ a, c \le (D+1)/6 $.
			\item If $ c < a $, then $ a, c \le D/18 $.
		\end{enumerate}
	\end{quote}
	For \ref{item:lem:simple_form_M=10}, we have:
	\begin{quote}
		\begin{enumerate}[(a)]
			\item If $ 5a < 2c $, then $ a, c \le (3/20)D $.
			\item If $ 5a = 2c $, then $ a, c \le (D+1)/8 $.
			\item If $ (2/5)c < a < (5/8)c $, then $ a, c \le (81/40)D $.
			\item If $ 8a = 5c $, then $ a, c \le (D+1)/10 $.
			\item If $ (5/8)c < a < (9/10)c $ , then $ a, c \le (121/35)D $.
			\item If $ 10a = 9c $, then $ a, c \le (D+1)/12 $.
			\item If $ 9c < 10a $, then $ a, c \le D/4 $.
		\end{enumerate}	
	\end{quote}
	For \ref{item:lem:simple_form_M=12}, we have:
	\begin{quote}
		\begin{enumerate}[(a)]
			\item If $ 3a < c $ or $ 3c < a $, then $ a, c \le D/12 $.
			\item If $ 3a = c $ or $ 3c = a $, then $ a, c \le (D+1)/8 $.
			\item If $ (1/3)c < a < (3/4)c $, then $ a, c \le (25/8)D $.
			\item If $ a = (3/4)c $ or $ a = (4/3) $, then $ a, c \le (D+1)/12 $.
			\item If $ (3/4)c < a < (4/3)c $, then $ a, c \le (49/36)D $.
			\item If $ (4/3)c < a < (25/12)c $, then $ a, c \le (243/64)D $.
			\item If $ a = (25/12)c $, then $ a, c \le (3/125)(D+1) $.
			\item If $ (25/12)c < a < 3c $, then $ a, c \le (1573/240)D $.
		\end{enumerate}	
	\end{quote}	
	For \ref{item:lem:simple_form_M=16}, we have:	
	\begin{quote}
		\begin{enumerate}[(a)]
			\item If $ 4a < c $, then $ a, c \le (5/64)D $.
			\item If $ 4a = c $, then $ a, c \le (D+1)/8 $.
			\item If $ (1/4)c < a < (9/16)c $, then $ a, c \le (25/32)D $.
			\item If $ 16a = 9c $, then $ a, c \le (D+1)/12 $.
			\item If $ (9/16)c < a < c $, then $ a, c \le (245/192)D $.
			\item If $ a = c $, then $ a, c \le (D+1)/32 $.
			\item If $ c < a $, then $ a, c \le D/32 $.
		\end{enumerate}	
	\end{quote}
	For \ref{item:lem:simple_form_M=18}, we have:	
	\begin{quote}
		\begin{enumerate}[(a)]
			\item If $ 9a < 2c $, then $ a, c \le (5/36)D $.
			\item If $ 9a = 2c $, then $ a, c \le (D+1)/8 $.
			\item If $ (2/9)c < a < (1/2)c $, then $ a, c \le (425/72)D $.
			\item If $ 2a = c $ or $ a = 2c $, then $ a, c \le (D+1)/24 $.
			\item If $ (1/2)c < a < (8/9)c $, then $ a, c \le (686/360)D $.
			\item If $ 9a = 8c $ or $ 8a = 9c $, then $ a, c \le (D+1)/16 $.
			\item If $ (8/9)c < a < (9/8)c $, then $ a, c \le (289/64)D $.
			\item If $ (9/8)c < a < (25/18)c $, then $ a, c \le (361/45)D $.
			\item If $ 18a = 25c $, then $ a, c \le (5/72)(D+1) $.
			\item If $ (25/18)c < a < 2c $, then $ a, c \le (1573/360)D $.
			\item If $ 2c < a $, then $ a, c \le D/24 $.
		\end{enumerate}	
	\end{quote}
	For \ref{item:lem:simple_form_M=25}, we have:	
	\begin{quote}
		\begin{enumerate}[(a)]
			\item If $ 25a < 4c $, then $ a, c \le (3/50)D $.
			\item If $ 25a = 4c $, then $ a, c \le (D+1)/80 $.
			\item If $ (4/25)c < a < (1/4)c $, then $ a, c \le (81/16)D $.
			\item If $ 4a = c $, then $ a, c \le (D+1)/100 $.
			\item If $ (1/4)c < a < (9/25)c $, then $ a, c \le (968/175)D $.
			\item If $ 25a = 9c $, then $ a, c \le (D+1)/120 $.
			\item If $ 9c < 25a < 16c $, then $ a, c \le (245/72)D $.
			\item If $ 25a = 16c $, then $ a, c \le (D+1)/160 $.
			\item If $ c < a $, then $ a, c \le D/100 $.
		\end{enumerate}	
	\end{quote}
\end{proof}

To compute $ H^M(D) $, we need a criterion whether the stabilizer $ \Gamma_0(M)_Q $ is non-trivial for a quadratic form $ Q \in \calQ_{-D, >0}^{M} $.
Such quadratic forms correspond to elliptic points for $ \Gamma_0(M) $.
By \cite[Corollary 3.7.2]{DS}, $ \Gamma_0(M) $ has no elliptic points for $ M \in \{ 4, 6, 8, 9, 12, 16, 18 \} $ and has exactly 2 elliptic points of period 2 and no elliptic points of period 3 for $ M \in \{ 10, 25 \} $.
For $ M = 10 $, elliptic points in the fundamental domain in (\ref{eq:fund_dom_M=10}) are $ (\pm 3 + \sqrt{-1})/10 $ whose corresponding quadratic forms are $ [10, \mp 6, 1] $.
For $ M = 25 $, elliptic points in the fundamental domain in (\ref{eq:fund_dom_M=25}) are $ (\pm 7 + \sqrt{-1})/25 $ whose corresponding quadratic forms are $ [25, \mp 14, 2] $.

We show $ H(D) $ and $ H^M(D) $ for a positive integer $ D \le 50 $ in Tables \ref{tab:Hurwitz_class_num1} and \ref{tab:Hurwitz_class_num2}.

\begin{table}[htb]
	\caption{$ H(D) $ and $ H^M(D) $ for positive integers $ D \le 100 $.}
	\label{tab:Hurwitz_class_num1}
	\centering
	\begin{tabular}{ccccccccc}
		\hline\noalign{\smallskip}
		$ D $ & $ H(D) $ & $ H^2(D) $ & $ H^3(D) $ & $ H^4(D) $ & $ H^5(D) $ & $ H^6(D) $ & $ H^7(D) $ & $ H^{8}(D) $ \\
		\hline
		\rowcolor[gray]{0.95}
		0 & $ -1/12 $ & $ -1/4 $ & $ -1/3 $ & $ -1/2 $ & $ -1/2 $ & $ -1 $ & $ -2/3 $ & $ -1 $ \\ 
		3 & 1/3 & 0 & 1/3 & 0 & 0 & 0 & 2/3 & 0 \\ 
		\rowcolor[gray]{0.95}
		4 & 1/2 & 1/2 & 0 & 0 & 1 & 0 & 0 & 0 \\ 
		7 & 1 & 2 & 0 & 2 & 0 & 0 & 1 & 2 \\ 
		\rowcolor[gray]{0.95}
		8 & 1 & 1 & 2 & 0 & 0 & 2 & 0 & 0 \\ 
		11 & 1 & 0 & 2 & 0 & 2 & 0 & 0 & 0 \\ 
		\rowcolor[gray]{0.95}
		12 & 4/3 & 2 & 4/3 & 2 & 0 & 2 & 8/3 & 0 \\ 
		15 & 2 & 4 & 2 & 4 & 2 & 4 & 0 & 4 \\ 
		\rowcolor[gray]{0.95}
		16 & 3/2 & 5/2 & 0 & 3 & 3 & 0 & 0 & 2 \\ 
		19 & 1 & 0 & 0 & 0 & 2 & 0 & 2 & 0 \\ 
		\rowcolor[gray]{0.95}
		20 & 2 & 2 & 4 & 0 & 2 & 4 & 4 & 0 \\ 
		23 & 3 & 6 & 6 & 6 & 0 & 12 & 0 & 6 \\ 
		\rowcolor[gray]{0.95}
		24 & 2 & 2 & 2 & 0 & 4 & 2 & 4 & 0 \\ 
		27 & 4/3 & 0 & 7/3 & 0 & 0 & 0 & 8/3 & 0 \\ 
		\rowcolor[gray]{0.95}
		28 & 2 & 4 & 0 & 6 & 0 & 0 & 2 & 8 \\ 
		31 & 3 & 6 & 0 & 6 & 6 & 0 & 6 & 6 \\ 
		\rowcolor[gray]{0.95}
		32 & 3 & 5 & 6 & 6 & 0 & 10 & 0 & 4 \\ 
		35 & 2 & 0 & 4 & 0 & 2 & 0 & 2 & 0 \\ 
		\rowcolor[gray]{0.95}
		36 & 5/2 & 5/2 & 4 & 0 & 5 & 4 & 0 & 0 \\ 
		39 & 4 & 8 & 4 & 8 & 8 & 8 & 0 & 8 \\ 
		\rowcolor[gray]{0.95}
		40 & 2 & 2 & 0 & 0 & 2 & 0 & 4 & 0 \\ 
		43 & 1 & 0 & 0 & 0 & 0 & 0 & 0 & 0 \\ 
		\rowcolor[gray]{0.95}
		44 & 4 & 6 & 8 & 6 & 8 & 12 & 0 & 0 \\ 
		47 & 5 & 10 & 10 & 10 & 0 & 20 & 10 & 10 \\ 
		\rowcolor[gray]{0.95}
		48 & 10/3 & 6 & 10/3 & 7 & 0 & 6 & 20/3 & 8 \\ 
		51 & 2 & 0 & 2 & 0 & 4 & 0 & 0 & 0 \\ 
		\rowcolor[gray]{0.95}
		52 & 2 & 2 & 0 & 0 & 0 & 0 & 4 & 0 \\ 
		55 & 4 & 8 & 0 & 8 & 4 & 0 & 8 & 8 \\ 
		\rowcolor[gray]{0.95}
		56 & 4 & 4 & 8 & 0 & 8 & 8 & 4 & 0 \\ 
		59 & 3 & 0 & 6 & 0 & 6 & 0 & 6 & 0 \\ 
		\rowcolor[gray]{0.95}
		60 & 4 & 8 & 4 & 12 & 4 & 8 & 0 & 16 \\ 
		63 & 5 & 10 & 8 & 10 & 0 & 16 & 5 & 10 \\ 
		\rowcolor[gray]{0.95}
		64 & 7/2 & 13/2 & 0 & 9 & 7 & 0 & 0 & 10 \\ 
		67 & 1 & 0 & 0 & 0 & 0 & 0 & 0 & 0 \\ 
		\rowcolor[gray]{0.95}
		68 & 4 & 4 & 8 & 0 & 0 & 8 & 8 & 0 \\ 
		71 & 7 & 14 & 14 & 14 & 14 & 28 & 0 & 14 \\ 
		\rowcolor[gray]{0.95}
		72 & 3 & 3 & 6 & 0 & 0 & 6 & 0 & 0 \\ 
		75 & 7/3 & 0 & 7/3 & 0 & 4 & 0 & 14/3 & 0 \\ 
		\rowcolor[gray]{0.95}
		76 & 4 & 6 & 0 & 6 & 8 & 0 & 8 & 0 \\ 
		79 & 5 & 10 & 0 & 10 & 10 & 0 & 0 & 10 \\ 
		\rowcolor[gray]{0.95}
		80 & 6 & 10 & 12 & 12 & 6 & 20 & 12 & 8 \\ 
		83 & 3 & 0 & 6 & 0 & 0 & 0 & 6 & 0 \\ 
		\rowcolor[gray]{0.95}
		84 & 4 & 4 & 4 & 0 & 8 & 4 & 4 & 0 \\ 
		87 & 6 & 12 & 6 & 0 & 0 & 12 & 12 & 12 \\ 
		\rowcolor[gray]{0.95}
		88 & 2 & 2 & 0 & 0 & 0 & 0 & 0 & 0 \\ 
		91 & 2 & 0 & 0 & 0 & 4 & 0 & 2 & 0 \\ 
		\rowcolor[gray]{0.95}
		92 & 6 & 12 & 12 & 18 & 0 & 24 & 0 & 24 \\ 
		95 & 8 & 16 & 16 & 16 & 8 & 32 & 0 & 16 \\ 
		\rowcolor[gray]{0.95}
		96 & 6 & 10 & 6 & 12 & 12 & 10 & 12 & 8 \\ 
		99 & 3 & 0 & 6 & 0 & 6 & 0 & 0 & 0 \\ 
		\rowcolor[gray]{0.95}
		100 & 5/2 & 5/2 & 0 & 0 & 5 & 0 & 0 & 0 \\ 
		\hline\noalign{\smallskip}
	\end{tabular}
\end{table}

\begin{table}[htb]
	\caption{$ H^M(D) $ for positive integers $ D \le 100 $.}
	\label{tab:Hurwitz_class_num2}
	\centering
	\begin{tabular}{cccccccc}
		\hline\noalign{\smallskip}
		$ D $ & $ H^{9}(D) $ & $ H^{10}(D) $ & $ H^{12}(D) $ & $ H^{13}(D) $ & $ H^{16}(D) $ & $ H^{18}(D) $ & $ H^{25}(D) $ \\
		\hline
		\rowcolor[gray]{0.95}
		0 & $ -1 $ & $ -3/2 $ & $ -2 $ & $ -7/6 $ & $ -2 $ & $ -3 $ & $ -5/2 $ \\ 
		3 & 0 & 0 & 0 & 2/3 & 0 & 0 & 0 \\ 
		\rowcolor[gray]{0.95}
		4 & 0 & 1 & 0 & 1 & 0 & 0 & 1 \\ 
		7 & 0 & 0 & 0 & 0 & 2 & 0 & 0 \\ 
		\rowcolor[gray]{0.95}
		8 & 2 & 0 & 0 & 0 & 0 & 2 & 0 \\ 
		11 & 2 & 0 & 0 & 0 & 0 & 0 & 2 \\ 
		\rowcolor[gray]{0.95}
		12 & 0 & 0 & 2 & 8/3 & 0 & 0 & 0 \\ 
		15 & 0 & 4 & 4 & 0 & 4 & 0 & 0 \\ 
		\rowcolor[gray]{0.95}
		16 & 0 & 5 & 0 & 3 & 0 & 0 & 3 \\ 
		19 & 0 & 0 & 0 & 0 & 0 & 0 & 2 \\ 
		\rowcolor[gray]{0.95}
		20 & 4 & 2 & 0 & 0 & 0 & 4 & 0 \\ 
		23 & 6 & 0 & 12 & 6 & 6 & 12 & 0 \\ 
		\rowcolor[gray]{0.95}
		24 & 0 & 4 & 0 & 0 & 0 & 0 & 4 \\ 
		27 & 4 & 0 & 0 & 8/3 & 0 & 0 & 0 \\ 
		\rowcolor[gray]{0.95}
		28 & 0 & 0 & 0 & 0 & 8 & 0 & 0 \\ 
		31 & 0 & 12 & 0 & 0 & 6 & 0 & 6 \\ 
		\rowcolor[gray]{0.95}
		32 & 6 & 0 & 12 & 0 & 0 & 10 & 0 \\ 
		35 & 4 & 0 & 0 & 4 & 0 & 0 & 0 \\ 
		\rowcolor[gray]{0.95}
		36 & 6 & 5 & 0 & 5 & 0 & 6 & 5 \\ 
		39 & 0 & 16 & 8 & 4 & 8 & 0 & 8 \\ 
		\rowcolor[gray]{0.95}
		40 & 0 & 2 & 0 & 4 & 0 & 0 & 0 \\ 
		43 & 0 & 0 & 0 & 2 & 0 & 0 & 0 \\ 
		\rowcolor[gray]{0.95}
		44 & 8 & 12 & 12 & 0 & 0 & 12 & 8 \\ 
		47 & 10 & 0 & 20 & 0 & 10 & 20 & 0 \\ 
		\rowcolor[gray]{0.95}
		48 & 0 & 0 & 7 & 20/3 & 8 & 0 & 0 \\ 
		51 & 0 & 0 & 0 & 4 & 0 & 0 & 4 \\ 
		\rowcolor[gray]{0.95}
		52 & 0 & 0 & 0 & 2 & 0 & 0 & 0 \\ 
		55 & 0 & 8 & 0 & 8 & 8 & 0 & 0 \\ 
		\rowcolor[gray]{0.95}
		56 & 8 & 8 & 0 & 8 & 0 & 8 & 8 \\ 
		59 & 6 & 0 & 0 & 0 & 0 & 0 & 6 \\ 
		\rowcolor[gray]{0.95}
		60 & 0 & 8 & 12 & 0 & 16 & 0 & 0 \\ 
		63 & 12 & 0 & 16 & 0 & 10 & 24 & 0 \\ 
		\rowcolor[gray]{0.95}
		64 & 0 & 13 & 0 & 7 & 12 & 0 & 7 \\ 
		67 & 0 & 0 & 0 & 0 & 0 & 0 & 0 \\ 
		\rowcolor[gray]{0.95}
		68 & 8 & 0 & 0 & 8 & 0 & 8 & 0 \\ 
		71 & 14 & 28 & 28 & 0 & 14 & 28 & 14 \\ 
		\rowcolor[gray]{0.95}
		72 & 12 & 0 & 0 & 0 & 0 & 12 & 0 \\ 
		75 & 0 & 0 & 0 & 14/3 & 0 & 0 & 10 \\ 
		\rowcolor[gray]{0.95}
		76 & 0 & 12 & 0 & 0 & 0 & 0 & 8 \\ 
		79 & 0 & 20 & 0 & 10 & 0 & 0 & 10 \\ 
		\rowcolor[gray]{0.95}
		80 & 12 & 10 & 24 & 0 & 24 & 20 & 0 \\ 
		83 & 6 & 0 & 0 & 0 & 0 & 0 & 0 \\ 
		\rowcolor[gray]{0.95}
		84 & 0 & 8 & 0 & 0 & 0 & 0 & 8 \\ 
		87 & 0 & 0 & 12 & 12 & 12 & 0 & 0 \\ 
		\rowcolor[gray]{0.95}
		88 & 0 & 0 & 0 & 4 & 0 & 0 & 0 \\ 
		91 & 0 & 0 & 0 & 2 & 0 & 0 & 4 \\ 
		\rowcolor[gray]{0.95}
		92 & 12 & 0 & 36 & 12 & 36 & 24 & 0 \\ 
		95 & 16 & 16 & 32 & 16 & 32 & 32 & 0 \\ 
		\rowcolor[gray]{0.95}
		96 & 0 & 20 & 10 & 0 & 12 & 0 & 12 \\ 
		99 & 12 & 0 & 0 & 0 & 0 & 0 & 6 \\ 
		\rowcolor[gray]{0.95}
		100 & 0 & 5 & 0 & 5 & 0 & 0 & 15 \\ 
		\hline\noalign{\smallskip}
	\end{tabular}				
\end{table}

% --------------------------------------------------------------------------

\section{Examples} \label{sec:exapmles}

% --------------------------------------------------------------------------

In this section, we give several examples of our formula in Theorem \ref{thm:main_class_num} and give conjectures for a square $ N $. 
% in the case when the level is $ p = 2, 3, 5, 7 $ or $ 13 $. 

To extend our formula in Theorem \ref{thm:main_class_num} for a square $ N $, we need to define $ H^M(0) $. 

In the case when the level is 1, put the 0th Hurwitz class number $ H(0) := -1/12 $.
Then Hurwitz-Eichler relation (\ref{eq:Hurwitz-Eichler}) holds for a square $ N $:
\[
\sum_{x \in \Z,\ x^2 \le 4N} H(4N- x^2)
= \sum_{ad=N} \max \{ a, d \}.
\]

Similarly, we define the Hurwitz class number $ H^M(0) $ for
$ M $ with $ 2 \le M \le 10 $ or $ M \in \{ 12, 13, 16, 18, 25 \} $ by
\begin{equation} \label{eq:H^M(0)}
	H^M(0) := -\frac{[\SL_2(\Z) : \Gamma_0(M)]}{12}
	= -\frac{M}{12} \prod_{p \mid M} \left( 1 + \frac{1}{p} \right).
\end{equation}

Under this definition, we calculate
\[
S(N) := \sum_{x \in \Z,\ x^2 \le 4N} H \left( 4N - x^2 \right), \quad  
S^{M}(N) := \sum_{x \in \Z,\ x^2 \le 4N} H^{M} \left( 4N - x^2 \right)
\]
for a positive integer $ N \le 12 $ in Table \ref{tab:Hurwitz_class_num_sum1} and Table \ref{tab:Hurwitz_class_num_sum2}. 
We can confirm that Theorem \ref{thm:main_class_num} holds for square-free $ N $ coprime to $ M $.

\begin{table}[htb]
	\caption{$ S(N) $ and $ S^{M}(N) $ for positive integers $ N \le 25 $.}
	\label{tab:Hurwitz_class_num_sum1}
	\centering
	\begin{tabular}{ccccccccc}
		\hline\noalign{\smallskip}
		$ N $ & $ S(N) $ & $ S^{2}(N) $ & $ S^{3}(N) $ & $ S^{4}(N) $ & $ S^{5}(N) $ & $ S^{6}(N) $ & $ S^{7}(N) $ & $ S^{8}(N) $ \\
		\hline
		\rowcolor[gray]{0.95}
		1 & 1 & 0 & 0 & $ -1 $ & 0 & $ -2 $ & 0 & $ -2 $ \\ 
		2 & 4 & 6 & 2 & 4 & 2 & 2 & 2 & 4 \\ 
		\rowcolor[gray]{0.95}
		3 & 6 & 4 & 10 & 2 & 4 & 6 & 4 & 0 \\ 
		4 & 10 & 18 & 6 & 18 & 6 & 10 & 6 & 12 \\ 
		\rowcolor[gray]{0.95}
		5 & 10 & 8 & 8 & 6 & 18 & 4 & 8 & 4 \\ 
		6 & 18 & 28 & 30 & 20 & 12 & 46 & 12 & 20 \\ 
		\rowcolor[gray]{0.95}
		7 & 14 & 12 & 12 & 10 & 12 & 8 & 26 & 8 \\ 
		8 & 24 & 46 & 18 & 52 & 18 & 34 & 18 & 52 \\ 
		\rowcolor[gray]{0.95}
		9 & 21 & 16 & 40 & 11 & 16 & 30 & 16 & 6 \\ 
		10 & 30 & 48 & 24 & 36 & 54 & 36 & 24 & 36 \\ 
		\rowcolor[gray]{0.95}
		11 & 22 & 20 & 20 & 18 & 20 & 16 & 20 & 16 \\ 
		12 & 44 & 80 & 74 & 83 & 32 & 134 & 32 & 64 \\ 
		\rowcolor[gray]{0.95}
		13 & 26 & 24 & 24 & 20 & 24 & 20 & 24 & 20 \\ 
		14 & 42 & 68 & 36 & 52 & 36 & 56 & 78 & 52 \\ 
		\rowcolor[gray]{0.95}
		15 & 40 & 32 & 68 & 24 & 72 & 52 & 32 & 16 \\ 
		16 & 52 & 102 & 42 & 118 & 42 & 82 & 42 & 132 \\ 
		\rowcolor[gray]{0.95}
		17 & 34 & 32 & 32 & 30 & 32 & 28 & 32 & 28 \\ 
		18 & 66 & 106 & 126 & 80 & 54 & 202 & 54 & 80 \\ 
		\rowcolor[gray]{0.95}
		19 & 38 & 36 & 36 & 34 & 36 & 32 & 36 & 32 \\ 
		20 & 70 & 128 & 56 & 136 & 126 & 100 & 56 & 108 \\ 
		\rowcolor[gray]{0.95}
		21 & 56 & 48 & 96 & 38 & 48 & 80 & 104 & 32 \\ 
		22 & 66 & 108 & 60 & 60 & 60 & 96 & 60 & 84 \\ 
		\rowcolor[gray]{0.95}
		23 & 46 & 44 & 44 & 42 & 44 & 40 & 44 & 40 \\ 
		24 & 100 & 192 & 170 & 196 & 80 & 326 & 80 & 224 \\ 
		\rowcolor[gray]{0.95}
		25 & 55 & 48 & 48 & 41 & 108 & 34 & 48 & 34 \\ 
		\hline\noalign{\smallskip}
	\end{tabular}		
\end{table}

\begin{table}[htb]
	\caption{$ S^{M}(N) $ for positive integers $ N \le 25 $.}
	\label{tab:Hurwitz_class_num_sum2}
	\centering
	\begin{tabular}{cccccccc}
		\hline\noalign{\smallskip}
		$ N $ & $ S^{9}(N) $ & $ S^{10}(N) $ & $ S^{12}(N) $ & $ S^{13}(N) $ & $ S^{16}(N) $ & $ S^{18}(N) $ & $ S^{25}(N) $ \\
		\hline
		\rowcolor[gray]{0.95}
		1 & $ -2 $ & $ -2 $ & $ -4 $ & 0 & $ -4 $ & $ -6 $ & $ -4 $ \\ 
		2 & 2 & 2 & 0 & 2 & 4 & 2 & 2 \\ 
		\rowcolor[gray]{0.95}
		3 & 8 & 0 & 2 & 4 & 0 & 4 & 4 \\ 
		4 & $ -2 $ & 10 & 8 & 6 & 8 & $ -6 $ & $ -2 $ \\ 
		\rowcolor[gray]{0.95}
		5 & 8 & 14 & 0 & 8 & 0 & 4 & 16 \\ 
		6 & 24 & 16 & 32 & 12 & 20 & 36 & 4 \\ 
		\rowcolor[gray]{0.95}
		7 & 8 & 8 & 4 & 12 & 8 & 0 & 12 \\ 
		8 & 18 & 34 & 36 & 18 & 44 & 34 & 18 \\ 
		\rowcolor[gray]{0.95}
		9 & 44 & 6 & 20 & 16 & $ -4 $ & 28 & 4 \\ 
		10 & 12 & 86 & 24 & 24 & 36 & 12 & 48 \\ 
		\rowcolor[gray]{0.95}
		11 & 20 & 16 & 12 & 20 & 16 & 16 & 12 \\ 
		12 & 60 & 56 & 139 & 32 & 56 & 108 & 32 \\ 
		\rowcolor[gray]{0.95}
		13 & 20 & 20 & 14 & 50 & 16 & 12 & 24 \\ 
		14 & 36 & 56 & 40 & 36 & 52 & 56 & 20 \\ 
		\rowcolor[gray]{0.95}
		15 & 56 & 56 & 36 & 32 & 16 & 40 & 64 \\ 
		16 & 22 & 82 & 90 & 42 & 132 & 42 & 18 \\ 
		\rowcolor[gray]{0.95}
		17 & 32 & 28 & 24 & 32 & 24 & 28 & 32 \\ 
		18 & 144 & 82 & 152 & 54 & 80 & 228 & 54 \\ 
		\rowcolor[gray]{0.95}
		19 & 32 & 32 & 28 & 36 & 32 & 24 & 36 \\ 
		20 & 56 & 230 & 104 & 56 & 104 & 100 & 112 \\ 
		\rowcolor[gray]{0.95}
		21 & 80 & 32 & 62 & 48 & 64 & 64 & 40 \\ 
		22 & 48 & 96 & 72 & 60 & 64 & 72 & 60 \\ 
		\rowcolor[gray]{0.95}
		23 & 44 & 40 & 36 & 44 & 52 & 40 & 44 \\ 
		24 & 140 & 152 & 370 & 80 & 308 & 268 & 40 \\ 
		\rowcolor[gray]{0.95}
		25 & 34 & 94 & 16 & 48 & 44 & 6 & 126 \\
		\hline\noalign{\smallskip}
	\end{tabular}		
\end{table}

Here we have the following conjecture which treats the case when $ N $ is a square. 

\begin{conjecture}
	Let $ M $ be $ 2 \le M \le 10 $ or $ M \in \{ 12, 13, 16, 18, 25 \} $ and  $ N $ be a positive integer coprime to $ M $.
%	Let $ M $ be $ 2 \le M \le 10 $ or $ M = 13 $.
	We put the number $ \delta_M(1, N) $ as in Theorem $ \ref{thm:main_cusp} $ and the Hurwitz class number $ H^M(0) $ as in  $ (\ref{eq:H^M(0)}) $.
	Unless $ M = 25 $ and $ N \equiv \pm 1 \bmod 5 $, we have
	\[
	\sum_{x \in \Z,\ x^2 \le 4N} H^M \left( 4N - x^2 \right)
	= \sum_{a d = 1} \left(\max \{ a, d \} - \delta_M(1, N) \min \{ a, d \} \right)
	\]
	and if $ M = 25 $ and $ N \equiv \pm 1 \bmod 5 $, we have	
	\[
	\sum_{x \in \Z,\ x^2 \le 4N} 
	H^{25} \left( 4N - x^2 \right)
	= \sum_{a d = N} |a - d| 
	- 4\sum_{ad = N, a \equiv d \bmod 5}
	\min \left\{ a, d \right\}.
	\]
\end{conjecture}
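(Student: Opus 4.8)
The plan is to run the same intersection-theoretic argument as in the proof of Theorem \ref{thm:main_cusp}, but now the hypothesis that $T_1^{\Gamma_0(M)} = \Delta$ and $T_N^{\Gamma_0(M)}$ intersect properly fails: when $N$ is a perfect square the scalar matrix $\sqrt{N}\,I$ occurs among the matrices $A = \smat{a & b \\ 0 & d}$ in (\ref{eq:def_of_T}) (with $a = d = \sqrt N$, $b = 0$), so $\Delta$ is a common component of $\Delta$ and $T_N^{\Gamma_0(M)}$. Accordingly I would first write $T_N^{\Gamma_0(M)} = \Delta + T_N'$ as cycles, where $T_N'$ gathers the remaining non-scalar matrices; an elementary check shows $\sqrt N\,I$ is the only matrix in (\ref{eq:def_of_T}) inducing the identity on $X_0(M)$, so $T_N'$ contains no diagonal component and meets $\Delta$ properly. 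The definition (\ref{eq:H^M(0)}) is engineered for exactly this situation: since $12\,H^M(0) = -[\SL_2(\Z):\Gamma_0(M)]$, one has $2\,H^M(0) = -[\SL_2(\Z):\Gamma_0(M)]/6 = \chi_{\mathrm{orb}}(Y_0(M))$, the orbifold Euler characteristic, which is precisely the (stacky) self-intersection of the diagonal on $Y_0(M)\times Y_0(M)$.

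Next I would separate the two endpoint terms of the left-hand side. Because $N$ is a square, $x = \pm 2\sqrt N$ lie in the range of summation and give $4N - x^2 = 0$, so
\[
\sum_{x^2 \le 4N} H^M(4N - x^2) = 2\,H^M(0) + \sum_{x^2 < 4N} H^M(4N - x^2).
\]
The strategy is to match the two summands on the right with the two pieces of $T_N^{\Gamma_0(M)} = \Delta + T_N'$: the interior intersection of $\Delta$ with the proper part $T_N'$ should account for $\sum_{x^2 < 4N} H^M(4N - x^2)$ exactly as in Theorem \ref{thm:main_Y_0(M)} (the non-scalar matrices produce isolated imaginary quadratic points of trace $x$ with $x^2 < 4N$), while the excess self-intersection of the diagonal component, regularized on the open curve, contributes the constant $2\,H^M(0) = \chi_{\mathrm{orb}}(Y_0(M))$. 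Granting this, the global degree computation of Lemma \ref{lem:global_int_num} still gives the cohomological total $2\sigma(N)$ (it depends only on the classes in $\mathrm{Pic}(\bbP^1\times\bbP^1)$), and subtracting the cusp contributions $(1+\delta_M(1,N))\sum_{ad=N}\min\{a,d\}$ extracted from Proposition \ref{prop:cusp_int_mult} yields
\[
\sum_{x^2 \le 4N} H^M(4N - x^2) = 2\sigma(N) - (1+\delta_M(1,N))\sum_{ad=N}\min\{a,d\} = \sum_{ad=N}\bigl(\max\{a,d\} - \delta_M(1,N)\min\{a,d\}\bigr),
\]
using $2\sigma(N) = \sum_{ad=N}(\max\{a,d\} + \min\{a,d\})$. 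The factor $(1-\delta_M(1,N))\sqrt N$ distinguishing the square case is then simply the $a = d = \sqrt N$ summand of $\sum_{ad=N}(\max\{a,d\} - \delta_M(1,N)\min\{a,d\})$. For the exceptional level $M = 25$ with $N \equiv \pm 1 \bmod 5$, the same outline applies, but the cusp multiplicities at $1/5, 2/5, 3/5, 4/5$ must be computed on $X_0^{(5)}(25)$ using the generalized Atkin--Lehner involutions of Table \ref{tab:other_involution} and the ramification bookkeeping of Proposition \ref{prop:order_t_W}; the congruence $a \equiv d \bmod 5$ in the correction term is forced by the refined compatibility condition $\delta_{s,s'}(N)$ at these cusps, exactly as in the non-square statement of Theorem \ref{thm:main_cusp}.

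The main obstacle is to make the excess contribution rigorous and to prove that it is exactly $2\,H^M(0)$, with no leakage into the cusps. The difficulty is that the diagonal component of $T_N^{\Gamma_0(M)}$ also passes through each pair $(s, s)$ of equal cusps, so at $(s, s)$ the local picture superposes the tangential (excess) self-intersection with the transverse contribution of $T_N'$, and the clean cusp formula of Proposition \ref{prop:cusp_int_mult} (proved under the non-square hypothesis) need not survive verbatim. Moreover, the scalar matrix $\sqrt N\,I$ also contributes to the formal Hurwitz sum at the elliptic points of $\Gamma_0(M)$ (via $\gamma\cdot\sqrt N\,I$ with $\gamma$ elliptic), and one must show that these contributions, together with the coarse self-intersection, reorganize into precisely the orbifold quantity $\chi_{\mathrm{orb}}(Y_0(M))$ while the cusp multiplicities remain $(1+\delta_M(1,N))\sum_{ad=N}\min\{a,d\}$. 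An alternative that sidesteps these local issues is to deduce the identity from the Eichler--Selberg trace formula for $T_N$ on $S_2(\Gamma_0(M))$: since $X_0(M)$ has genus zero the trace vanishes, the elliptic terms reproduce $\sum_{x^2 < 4N} H^M(4N - x^2)$, and the parabolic term at $t^2 = 4N$ supplies precisely $2\,H^M(0)$ together with the divisor correction, the remaining work being to identify the trace-formula class-number coefficients with $H^M$ and to match the parabolic term with (\ref{eq:H^M(0)}).
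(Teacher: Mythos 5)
This statement is stated in the paper as a \emph{conjecture}: the author gives no proof of it, only numerical evidence for $N \le 25$ in Tables \ref{tab:Hurwitz_class_num_sum1} and \ref{tab:Hurwitz_class_num_sum2}. So there is no proof in the paper against which your argument can be matched, and the relevant question is whether your proposal actually closes the gap the author left open. It does not. You correctly diagnose why the paper's machinery breaks down for square $N$ (the scalar matrix $\sqrt{N}\,I$ makes $\Delta$ a component of $T_N^{\Gamma_0(M)}$, so Theorem \ref{thm:main_Y_0(M)} and Proposition \ref{prop:cusp_int_mult} no longer apply), and your decomposition $T_N^{\Gamma_0(M)} = \Delta + T_N'$ together with the observation that $2H^M(0)$ equals the orbifold Euler characteristic of $Y_0(M)$ is a sensible heuristic for where the constant term should come from. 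But the three steps that carry all the content are left unproven, as you yourself concede: (i) that the regularized self-intersection of the diagonal component on the open surface $Y_0(M)\times Y_0(M)$ is exactly $2H^M(0)$, with the elliptic-point corrections absorbed correctly; (ii) that the cusp multiplicities are still $(1+\delta_M(1,N))\sum_{ad=N}\min\{a,d\}$ when the diagonal component passes through every pair $(s,s)$, since Proposition \ref{prop:cusp_int_mult} is proved only under the hypothesis that $N_1N_2$ is not a square; and (iii) in the trace-formula alternative, the identification of the class-number and parabolic terms of the Eichler--Selberg formula for $\Gamma_0(M)$ with $H^M$ and with \eqref{eq:H^M(0)}. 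Each of these is precisely the assertion being conjectured, restated in a different language, rather than a reduction to something established.

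A secondary issue: the first displayed identity in the statement as printed has $\sum_{ad=1}$ on the right-hand side where the intended reading (which you adopt, and which the numerical tables support) is $\sum_{ad=N}$; your arithmetic $2\sigma(N) - (1+\delta_M(1,N))\sum_{ad=N}\min\{a,d\} = \sum_{ad=N}(\max\{a,d\} - \delta_M(1,N)\min\{a,d\})$ is correct under that reading. In summary, your proposal is a reasonable research plan for attacking the conjecture, but it is not a proof, and the statement remains open in the paper.
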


% --------------------------------------------------------------------------$

\bibliographystyle{plain}
\bibliography{myrefs_for_class_number}

% --------------------------------------------------------------------------
\end{document}